\def\dbar{\bar\partial}
\def\R{{\mathbb R}}
\def\C{{\mathbb C}}
\def\Cn{\C^n}
\def\PM{{\mathcal{PM}}}
\def\Dom{{\rm Dom\,  }}
\def\Z{{\mathbb Z}}
\def\Ok{{\mathcal O}}
\newcommand{\Com}[1]{}
\DeclareMathOperator{\supp}{supp}
\DeclareMathOperator{\esssupp}{esssupp}
\DeclareMathOperator{\dist}{dist}
\DeclareMathOperator{\Reg}{Reg}
\def\be{\begin{equation}}
\def\ee{\end{equation}}
\newtheorem{thm}{Theorem}[section]
\newtheorem{lma}[thm]{Lemma}
\newtheorem{cor}[thm]{Corollary}
\newtheorem{prop}[thm]{Proposition}
\theoremstyle{definition}
\theoremstyle{remark}
\newtheorem{preremark}{Remark}
\newtheorem{preex}{Example}
\newenvironment{ex}{\begin{preex}}{\end{preex}}
\numberwithin{equation}{section}
\begin{document}

\title{Koppelman formulas on the $A_1$-singularity}

\date{\today}

\author{Richard L\"ark\"ang}

\address{Richard L\"ark\"ang, Department of Mathematics, University of Wuppertal, Gau{\ss}str. 20, 42119 Wuppertal, Germany, and Department of
  Mathematics, Chalmers University of Technology and the University of Gothenburg, 412 96 G\"oteborg, Sweden.}
\email{larkang@chalmers.se}

\author{Jean Ruppenthal}

\address{J. Ruppenthal, Department of Mathematics, University of Wuppertal, Gau{\ss}str. 20, 42119 Wuppertal, Germany}

\email{ruppenthal@uni-wuppertal.de}

\subjclass{32A26, 32A27, 32B15, 32C30}

\keywords{}

\begin{abstract}
In the present paper, we study the regularity of the Andersson--Samuelsson Koppelman integral operator on the $A_1$-singularity.
Particularly, we prove $L^p$- and $C^0$-estimates. As applications, we obtain $L^p$-homotopy formulas
for the $\dbar$-equation on the $A_1$-singularity, and we prove that the $\mathcal{A}$-forms introduced by Andersson--Samuelsson
are continuous on the $A_1$-singularity.
\end{abstract}

\maketitle

\section{Introduction}

In this article, we study the local $\dbar$-equation on singular varieties.
In $\Cn$, it is classical that the $\dbar$-equation $\dbar f = g$, where $g$ is a $\dbar$-closed $(0,q)$-form,
can be solved locally for example if $g$ is in $C^\infty$, $L^p$ or $g$ is a current, where the solution
$f$ is of the same class (or in certain cases, also with improved regularity).
To prove the existence of solutions which are smooth forms or currents, or to obtain
$L^p$-estimates for smooth solutions, one can use Koppelman formulas, see for example, \cite{Range}*{LiMi}.

On singular varieties, it is no longer necessarily the case that the $\dbar$-equation is locally
solvable over these classes of forms, as for example on the variety $\{ z_1^4 + z_2^5 + z_2^4 z_1 = 0 \}$,
there exist smooth $\dbar$-closed forms which do not have smooth $\dbar$-potentials,
see \cite{RuDipl}*{Beispiel~1.3.4}.

Solvability of the $\dbar$-equation on singular varieties has been
studied in various articles in recent years, for example describing
in certain senses explicitly the obstructions to solving the $\dbar$-equation
in $L^2$, see \cites{FOV,OV2,RuDuke}.
Among these and other results, one can find examples when the $\dbar$-equation
is not always locally solvable in $L^p$, for example when $p = 1$ or $p = 2$.

On the other hand, in \cite{AS2}, Andersson and Samuelsson define on an arbitrary
pure dimensional singular variety $X$ sheaves $\mathcal{A}^X_q$ of $(0,q)$-currents,
such that the $\dbar$-equation is solvable in $\mathcal{A}$, and the solution is
given by Koppelman formulas, i.e., there exists operators $\mathcal{K} : \mathcal{A}_{q} \to \mathcal{A}_{q-1}$
such that
if $\varphi \in \mathcal{A}$, then
\begin{equation} \label{eqkoppel}
    \varphi(z) = \dbar \mathcal{K}\varphi(z) + \mathcal{K} (\dbar \varphi)(z),
\end{equation}
where the operators $\mathcal{K}$ are given as
\begin{equation*}
    \mathcal{K}\varphi(z) = \int K(\zeta,z) \wedge \varphi(\zeta),
\end{equation*}
for some integral kernels $K(\zeta,z)$.
The sheaf $\mathcal{A}_q$ coincides with the sheaf of smooth $(0,q)$-forms on $X^*$, where
$X^*$ is the regular part of $X$.
For the cases when the $\dbar$-equation is not solvable for smooth forms, 
the $\mathcal{A}$-sheaves must necessarily have singularities along $X_{\rm sing}$,
but from the definition of the $\mathcal{A}$-sheaves, it is not very apparent
how the singularities of the $\mathcal{A}$-sheaves are in general.
In order to take better advantage of the results in \cite{AS2}, one would
like to know more precisely how the $\mathcal{A}$ singularities of the
$\mathcal{A}$-sheaves are.
In particular, it would be interesting to know whether for certain varieties,
the $\mathcal{A}$-sheaves are in fact smooth, or, say, $C^k$ also over $X_{\rm sing}$.

In this article, we will consider solvability of the $\dbar$-equation on the so-called
$A_1$-singularity which is defined by
\begin{equation*}
    \{ \zeta_1 \zeta_2 - \zeta_3^2 = 0 \} \subseteq \C^3.
\end{equation*}
Our main method of study will be to study mapping properties of the Koppelman formulas
for the $\dbar$-equation from \cite{AS2}.

The motivation for us to do this is two-fold: First of all, as in the smooth case,
using integral formulas for studying the $\dbar$-equation has the advantage that it
can be used to studying the $\dbar$-equation over various function spaces,
like forms which are $C^k$, $C^\infty$, H\"older, $L^p$ or currents. 
Various results about solvability of the $\dbar$-equation on the $A_1$-singularity are
contained in earlier articles, as will be elaborated on below, and hence,
one wouldn't expect to obtain so much new results for this variety.
But thanks to the simplicity of the $A_1$-singularity, it serves as a good testing
ground for the method.
However, since the Koppelman formulas are defined for arbitrary pure dimensional varieties,
there is hope to extend the methods used here to more general varieties, and thus obtain
new results on such varieties about the solvability of the $\dbar$-operator over various functions spaces.
In particular, it seems likely that with some elaborations of the methods
here, that one should be able to extend the results here also to all rational double points.
The underlying idea and hope is that integral formulas -- as on manifolds -- will open the
door to further explorations. Let us just mention e.g. that 
it is usually easy to show that an integral operator is compact.
So, one gets compact solution operators for the $\dbar$-equation.
From that one can also deduce compactness of the $\dbar$-Neumann operator.

A second motivation is the following: the $\mathcal{A}$-sheaves in \cite{AS2}
are defined by starting with smooth forms, applying Koppelman formulas, multiplying
with smooth forms, applying Koppelman formulas, and iterating this procedure a
finite number of times. In the particular example of the $A_1$-singularity,
we obtain for example the new result that the $\mathcal{A}$-sheaves
are contained in the sheaves of forms with continuous coefficients,
see Corollary~\ref{cor:asheaves} below.

We will now describe the main results in this article:
From now on, we let $X$ be the variety given by
\begin{equation*}
    X = \{ \zeta \in B_1(0) \mid \zeta_1 \zeta_2 - \zeta_3^2 = 0 \} \subseteq \C^3,
\end{equation*}
where $B_r(0)$ is the ball of radius $r$ in $\C^3$. In addition, we let
\begin{equation*}
    X' = \{ \zeta \in B_{1+\epsilon}(0) \mid \zeta_1 \zeta_2 - \zeta_3^2 = 0 \} \subseteq \C^3,
\end{equation*}
where $\epsilon > 0$. In general, the input to the $\dbar$-equation will live on $X'$,
while the solutions are in general only defined on $X$.
For precise definitions of what we mean by $L^p$-forms and $C^0$ forms on $X'$ and $X$,
see Section \ref{sec:lp-forms}.

\begin{thm}\label{thm:main1}
    Let $\mathcal{K}$ be the integral operator from \cite{AS2} on $X'$,
    as here defined in \eqref{eq:AS1},
    and let $\frac{4}{3} < p\leq \infty$ and $q \in \{1,2\}$.
    Then:
    
    \medskip
    (i) $\mathcal{K}$ gives a bounded linear operator from $L^p_{0,q}(X')$ to $L^p_{0,q-1}(X)$.
    
    \medskip
    (ii) $\mathcal{K}$ gives a continuous linear operator from $L^\infty_{0,q}(X')$ to $C^0_{0,q-1}(X)$.
\end{thm}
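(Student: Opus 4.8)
The plan is to reduce everything to explicit pointwise estimates on the integral kernel $K(\zeta,z)$ of \eqref{eq:AS1}, and then to feed these into Schur-type estimates for part (i) and a direct uniform-integrability argument for part (ii). The kernel $K(\zeta,z)$ has two sources of singularity: the usual Cauchy-type singularity along the diagonal $\{\zeta = z\}$, which on the two-dimensional $X$ behaves like $|\zeta - z|^{-3}$, and an additional singularity concentrated at the isolated singular point $0 \in X$, coming from the fact that $X$ is a cone. My first step would be to make both singularities transparent by pulling back along the normalization $\nu\colon \C^2 \to X$, $\nu(s,t) = (s^2,t^2,st)$, which is a $2$-to-$1$ branched cover, unramified away from the origin. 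A short computation shows $\nu^\ast\, dV_X \asymp (|s|^2+|t|^2)^2\, dV(s,t)$, so the induced measure degenerates to order $4$ at the origin, matching the fact that $|\zeta| \asymp |s|^2 + |t|^2$. Under this pullback $X$ becomes smooth, and the estimates on $X$ become estimates on $\C^2$ against this degenerate weight.

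Second, I would split $K = K_{\mathrm{diag}} + K_0$ according to the two singular loci and treat them separately. Away from the origin, $X$ is a smooth surface and $K_{\mathrm{diag}}$ is, up to bounded factors, the standard Koppelman kernel; since $|\zeta - z|^{-3}$ is locally integrable on the real four-dimensional $X$, the operator with kernel $K_{\mathrm{diag}}$ is a Riesz-type potential of positive order, and is handled by the classical smooth-case estimates, giving $L^p \to L^p$ boundedness for all $p$ and $L^\infty \to C^0$ continuity by an equicontinuity argument. The entire difficulty is therefore concentrated in the behaviour of $K_0$ as $\zeta$ and $z$ approach the singular point, possibly at different rates.

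Third, for the $K_0$ part I would pass to $\C^2$ via $\nu$ and apply the Schur test with weights adapted to the quasi-homogeneous scaling $(s,t) \mapsto (\lambda s, \lambda t)$, i.e. estimate $\sup_z \int |K_0(\zeta,z)|\, \omega(\zeta)\, dV(\zeta)$ and the transposed integral for suitable powers $\omega$. Homogeneity reduces each such integral to a one-dimensional radial integral times an angular integral over the compact link, and the degeneration $\asymp r^4$ of the measure together with the homogeneity degree of $K_0$ determines exactly for which exponents the relevant integral converges. I expect the threshold $p = 4/3$ (equivalently $p' = 4$) to emerge precisely here as the borderline integrability exponent, and this is the step I expect to be the main obstacle: one must estimate $K_0$ sharply enough — not merely by the diagonal bound — to see that the radial integral converges for $p > 4/3$ and to identify that it genuinely diverges at the endpoint. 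Finally, for part (ii) the uniform bound $\sup_z \int |K(\zeta,z)|\, dV(\zeta) < \infty$ obtained along the way gives $L^\infty \to L^\infty$ boundedness, and continuity of $\mathcal{K}\varphi$ on all of $X$ — including at the singular point — follows from dominated convergence once one checks that the family $\{K(\cdot,z)\}_z$ is uniformly integrable near both the diagonal and the origin.
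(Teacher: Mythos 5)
Your overall strategy coincides with the paper's: pull back along the two-sheeted cover $(w_1,w_2)\mapsto(w_1^2,w_2^2,w_1w_2)$, observe that the volume form degenerates like $\xi^4=(|w_1|^2+|w_2|^2)^2$, and reduce (i) to weighted Young/Schur-type integral estimates on $\C^2$, with the threshold $4/3$ emerging from weighted integrability (this is the content of Lemmas~\ref{lmakerninteg}, \ref{lmacoveringintegr} and \ref{lmal2covering}). The genuine gap is in your third step. After pulling back, the diagonal $\{\zeta=z\}$ becomes the union of the two loci $\{w=x\}$ and $\{w=-x\}$, and the paper's essential structural input is Lemma~\ref{lmanorm}: $\alpha^2:=\pi^*\|\zeta-z\|^2$ satisfies $\alpha^2\le\|w-x\|^2\|w+x\|^2\le4\alpha^2$, so the pulled-back kernel is comparable to $\|w\|^{\delta+\gamma}\,\|w-x\|^{-3}\|w+x\|^{-3}$. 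This singular set is \emph{not} a cone through the origin, so your claim that homogeneity reduces the relevant integrals to a radial integral times an angular integral over the compact link fails: in the critical transition region $\|w\|\asymp\|x\|$ the angular integrand itself blows up where $w/\|w\|$ approaches $\pm x/\|x\|$, and this is exactly where the exponent $p=4/3$ is decided. What is needed instead is the two-point-singularity estimate for $\int_D\|\zeta\|^{\gamma}\,\|\zeta-x\|^{-\alpha}\|\zeta+x\|^{-\beta}\,dV$ (the paper's Lemma~\ref{lem:estimateCn3}), proved by decomposing $D$ into balls around $0$, $x$, $-x$ and their complement. Relatedly, your splitting $K=K_{\mathrm{diag}}+K_0$ does not decouple the two difficulties, since the diagonal and the cone point meet at $(\zeta,z)=(0,0)$; the factorization of $\alpha$ makes any such splitting unnecessary.

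A secondary issue concerns (ii): continuity of $\mathcal{K}\varphi$ at the origin does not follow from uniform integrability of $|K(\cdot,z)|$ and dominated convergence alone, because the kernel carries bounded but non-smooth angular factors of the form $\overline{\eta}_j/\|\eta\|$. The paper isolates these into a factor $g(w,x)$ and must prove separately (Proposition~\ref{prop:lpconvergence}) that $g(\cdot,x)\to g(\cdot,0)$ in $L^r$ for all finite $r$; the difference $\mathcal{K}\varphi(x)-\mathcal{K}\varphi(0)$ is then split into a term controlled by a Taylor expansion of $\|w-x\|^3\|w+x\|^3$ about $x=0$ and a term controlled by this $L^r$-convergence. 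A Vitali-type argument such as yours can be made to work, but the convergence of these angular factors is a step you must actually verify; it is not automatic from integrability of the kernel.
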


In particular, one obtains the following result about the $\mathcal{A}$-sheaves from \cite{AS2}.

\begin{cor} \label{cor:asheaves}
    Let, as in \cite{AS2}, $\mathcal{A}^X_{q}$ be the sheaf of currents which can be written as finites sums of the 
    \begin{equation*}
        \xi_\nu\wedge (\mathcal{K}_\nu(\dots \xi_2 \wedge \mathcal{K}_2(\xi_1\wedge \mathcal{K}_1(\xi_1)))),
    \end{equation*}
    where each $\mathcal{K}_i$ is an integral operator as in Theorem \ref{thm:main1},
    and $\xi_i$ are smooth forms on $X'$. Then
    \begin{equation*}
        \mathcal{A}^X_{q} \subseteq C^0_{0,q}(X).
    \end{equation*}
\end{cor}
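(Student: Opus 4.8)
The plan is to proceed by induction on the number of Koppelman operators $\mathcal{K}_i$ appearing in the defining expression, using Theorem \ref{thm:main1} as the single nontrivial input. The key observation is that each generator of $\mathcal{A}^X_q$ is built by alternately wedging with smooth forms on $X'$ and applying the operators $\mathcal{K}_i$, so I want to track at each stage that the intermediate current lies in a function space to which both operations can be applied. The natural invariant to carry through the induction is that after each application of $\mathcal{K}_i$ the resulting form is continuous on $X$, i.e. lies in $C^0_{0,\bullet}(X)$.

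First I would set up the base case: the innermost object $\xi_1 \wedge \mathcal{K}_1(\xi_1)$ starts from a smooth form $\xi_1$ on $X'$. Since smooth forms on $X'$ are in particular in $L^\infty_{0,q}(X')$, part (ii) of Theorem \ref{thm:main1} gives that $\mathcal{K}_1(\xi_1) \in C^0_{0,q-1}(X)$. Wedging with the smooth form $\xi_2$ (restricted to $X$) preserves continuity, so $\xi_2 \wedge \mathcal{K}_1(\xi_1) \in C^0_{0,\bullet}(X)$. For the inductive step, I would note the slight subtlety that the next operator $\mathcal{K}_2$ in Theorem \ref{thm:main1} takes inputs defined on $X'$, whereas the current I have produced lives only on $X$; here I would invoke that in the Andersson--Samuelsson construction the operators are applied on the larger variety $X'$ and the relevant forms are understood on a suitable neighborhood, or else restrict the radius slightly at each stage so that the domain shrinks from $X'$ to $X$ in a controlled way, keeping enough room for finitely many iterations.

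The core of the argument is then the inductive step: assuming the current $\eta$ obtained after the first $k$ operations is continuous on $X$, I want to conclude that $\xi_{k+1} \wedge \mathcal{K}_{k+1}(\eta)$ is again continuous. Since $\eta$ is continuous on a compact (or relatively compact) piece of $X$, it is bounded, hence lies in $L^\infty_{0,\bullet}$; applying part (ii) of Theorem \ref{thm:main1} yields $\mathcal{K}_{k+1}(\eta) \in C^0_{0,\bullet}(X)$, and wedging with the smooth $\xi_{k+1}$ again preserves continuity. Iterating finitely many times and then taking finite sums (which preserves $C^0$) gives $\mathcal{A}^X_q \subseteq C^0_{0,q}(X)$.

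I expect the main obstacle to be the bookkeeping of domains rather than any analytic difficulty: Theorem \ref{thm:main1} maps $L^\infty$ on $X'$ to $C^0$ on the smaller $X$, so one must verify that the continuous intermediate forms produced on $X$ can legitimately serve as inputs to the next operator, which nominally requires data on $X'$. The cleanest resolution is to carry the induction over a nested family of balls $B_{r_0} \supset B_{r_1} \supset \cdots$, shrinking the radius by a small amount at each of the finitely many steps so that each $\mathcal{K}_i$ maps continuous forms on $B_{r_{i-1}} \cap X'$ into continuous forms on $B_{r_i} \cap X$, and the final domain is still a neighborhood of the origin. Everything else — boundedness of continuous forms, preservation of continuity under wedging with smooth forms, and stability under finite sums — is routine.
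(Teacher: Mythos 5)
Your proposal is correct and matches the paper's (implicit) argument: the corollary is stated there as an immediate consequence of Theorem \ref{thm:main1}(ii), obtained by exactly the induction you describe — smooth forms are in $L^\infty$, each application of $\mathcal{K}_i$ lands in $C^0$ hence locally in $L^\infty$, and wedging with smooth forms preserves this. Your attention to the $X$ versus $X'$ domain bookkeeping (shrinking neighborhoods at each of the finitely many steps, which is harmless for a sheaf-level statement) is the right way to make the iteration rigorous.
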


Although the Koppelman operator $\mathcal{K}$ maps $L^p_{0,q}(X')$ to $L^p_{0,q-1}(X)$ for $p > 4/3$,
this does not necessarily imply that the $\dbar$-equation is locally solvable in $L^p$ for $p > 4/3$,
since it is not necessarily the case that \eqref{eqkoppel} holds for $\varphi \in L^p$.
However, in order to describe when the Koppelman formula \eqref{eqkoppel} does indeed hold,
we first need to discuss various definitions of the $\dbar$-operator on $L^p$-forms
on singular varieties. If we let $\dbar_{sm}$ be the $\dbar$-operator on smooth $(0,q)$-forms
with support on $X^*=X\setminus\{0\}$ away from the singularity,
then this operator has various extensions as a closed operator in $L^p_{0,q}(X)$.

One extension of the $\dbar_{sm}$-operator is the maximal closed extension, i.e., the weak $\dbar$-operator $\dbar_w^{(p)}$
in the sense of currents, so if $g \in L^p_{0,q}(X)$, then $g \in \Dom \dbar_w^{(p)}$ if $\dbar g \in L^p_{0,q}(X)$
in the sense of distributions on $X$. When it is clear from the context, we will drop the superscript $(p)$ in $\dbar^{(p)}_w$,
and we will for example write $g \in \Dom \dbar_w \subset L^p_{0,q}$. For the $\dbar_w$-operator, we obtain the following
result about the Koppelman formula \eqref{eqkoppel}.

\begin{thm}\label{thm:main3}
Let $\mathcal{K}$ be the integral operator from Theorem \ref{thm:main1}.
Let $\varphi \in \Dom \dbar_w \subseteq L^p_{0,q}(X')$, where $2 \leq p \leq \infty$ and $q \in \{1,2\}$.

Then
\begin{eqnarray} \label{eq:dbarlp}
\varphi(z) &=& \dbar_w \mathcal{K}\varphi(z) + \mathcal{K}\big( \dbar_w \varphi\big)(z)
\end{eqnarray}
in the sense of distributions on $X$.
\end{thm}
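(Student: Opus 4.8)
The plan is to obtain \eqref{eq:dbarlp} from the smooth Koppelman formula \eqref{eqkoppel} by a density argument. Since $X'$ is bounded and $p\geq2$, every $\varphi\in L^p_{0,q}(X')$ already lies in $L^2_{0,q}(X')$, and its distributional $\dbar$ is the same object whether it is read in $L^p$ or in $L^2$; thus $\varphi\in\Dom\dbar_w\subseteq L^2_{0,q}(X')$ with $\dbar_w\varphi\in L^2_{0,q+1}(X')$. As $2>4/3$, Theorem~\ref{thm:main1} gives the boundedness of $\mathcal{K}$ from $L^2_{0,q}(X')$ to $L^2_{0,q-1}(X)$, so it suffices to prove the formula working entirely within $L^2$.

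The key ingredient is a density statement: every $\varphi\in\Dom\dbar_w\subseteq L^2_{0,q}(X')$ is a graph-norm limit of forms $\varphi_j\in\mathcal{A}$ that are smooth on $X'\setminus\{0\}$, meaning $\varphi_j\to\varphi$ in $L^2_{0,q}(X')$ and $\dbar\varphi_j\to\dbar_w\varphi$ in $L^2_{0,q+1}(X')$. Granting this, the smooth formula gives $\varphi_j=\dbar\mathcal{K}\varphi_j+\mathcal{K}(\dbar\varphi_j)$ for each $j$. By the $L^2$-boundedness from Theorem~\ref{thm:main1}, $\mathcal{K}\varphi_j\to\mathcal{K}\varphi$ and $\mathcal{K}(\dbar\varphi_j)\to\mathcal{K}(\dbar_w\varphi)$ in $L^2(X)$, hence as currents on $X$; since $\dbar$ is continuous on currents, also $\dbar\mathcal{K}\varphi_j\to\dbar_w\mathcal{K}\varphi$. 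Letting $j\to\infty$ converts the smooth identities into \eqref{eq:dbarlp}, interpreted in the sense of distributions on $X$.

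The main obstacle is this density statement, and it is exactly here that the hypothesis $p\geq2$ is used, through the reduction to the Hilbert space $L^2$. To prove it I would lift the problem to $\C^2$ via the standard presentation $X\cong\C^2/\Z_2$, realized by $\pi\colon\C^2\to X$, $\pi(s,t)=(s^2,t^2,st)$, with $\Z_2$ acting by $(s,t)\mapsto(-s,-t)$ and ramification only over the origin. Near $0$ one has $\pi^*g_X\sim|(s,t)|^2 g_{\mathrm{std}}$, so pullback identifies $L^2_{0,q}(X)$ with the $\Z_2$-invariant part of the weighted space $L^2_{0,q}(\C^2,|(s,t)|^{4-2q}\,dV)$ and intertwines the two weak $\dbar$-operators. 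For $q\in\{1,2\}$ the weights $|(s,t)|^{4-2q}$ are non-negative powers of $|(s,t)|$ in real dimension four, hence Muckenhoupt $A_2$-weights; on the smooth manifold $\C^2$ a $\Z_2$-equivariant Friedrichs mollification then yields invariant smooth forms converging to $\pi^*\varphi$ together with their $\dbar$ in the weighted graph norm. Descending these approximants through $\pi$ produces forms $\varphi_j$ smooth on $X'\setminus\{0\}$ which, arising from genuinely smooth invariant forms on $\C^2$, carry no residual current along the singularity and belong to $\mathcal{A}$, so that \eqref{eqkoppel} applies and $\dbar\varphi_j$ realizes $\dbar_w\varphi$ in the limit. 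Establishing this final point---that the descended forms are legitimate inputs to the Koppelman formula and reproduce $\dbar_w\varphi$---is the genuinely technical part of the argument.
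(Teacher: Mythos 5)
Your overall skeleton --- reduce to $L^2$ using $p\geq 2$ and the boundedness of $X'$, approximate $\varphi$ in the graph norm of $\dbar$ by forms to which the smooth Koppelman formula applies, and pass to the limit via the $L^2$-boundedness of $\mathcal{K}$ from Theorem~\ref{thm:main1} --- is the same as the paper's, and the reduction and the limiting argument are fine. The gap is that the whole weight of your proof rests on the density statement, and that statement is exactly the hard point. As formulated it is closely akin to asserting that the weak and strong extensions of $\dbar$ coincide on $L^2_{0,q}(X')$, which the paper explicitly does not assume (and which, in the paper's logic, could only be extracted \emph{after} Theorems~\ref{thm:main3} and \ref{thm:main4} are proved --- so taking it as an input without an independent proof is circular). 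Your proposed proof of the density via $\Z_2$-equivariant mollification on the cover has two unaddressed failure points. First, the pullback does not obviously ``intertwine the two weak $\dbar$-operators'': the distributional $\dbar$ on $X$ is tested only against pullbacks of ambient test forms, which degenerate at the branch point, so $\dbar(\pi^*\varphi)$ computed against \emph{all} test forms on $\C^2$ may differ from $\pi^*(\dbar_w\varphi)$ by a current supported at the origin; if it does, the mollified $\dbar\varphi_j$ do not converge in the weighted $L^2$-space to the right limit. Second, a $\Z_2$-invariant smooth form on $\C^2$ does not descend to a smooth form on a neighbourhood of $X$ (for instance $d\bar w_1\wedge d\bar w_2$ descends to a form with a $1/\bar\zeta_3$ singularity), nor is it clear that the descended forms lie in $\mathcal{A}$, so \cite{AS2}, Theorem~1.4 does not apply to your approximants. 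You flag this yourself as ``the genuinely technical part,'' but it is precisely the content of the theorem.

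For comparison, the paper avoids any such density statement. It sets $\phi_k=\mu_k\varphi$ with the logarithmic cut-offs $\mu_k$ of Section~\ref{sec:main2b} vanishing near the singularity, applies Friedrichs mollification on the manifold $X^*$ (where the approximants have support away from $0$ and hence genuinely extend smoothly to a neighbourhood of $X$), obtains the Koppelman formula for each $\phi_k$, and then must control the commutator term $\mathcal{K}(\dbar\mu_k\wedge\varphi)$. Showing that this term tends to $0$ in $L^2$ is Theorem~\ref{thm:main2}, which rests on the decay $|\dbar\mu_k|\lesssim\chi_k/(\|\zeta\|\,|\log\|\zeta\||)$ and the kernel estimate of Lemma~\ref{lem:cutoff2}; this is the analytic heart of the proof and has no counterpart in your argument. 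To repair your proposal you would either have to supply a genuine proof of the approximation property (which would be a result of independent interest) or reintroduce a cut-off and estimate the resulting error term as the paper does.
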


Another extension of the $\dbar$-operator is the minimal closed extension, i.e., the
strong extension $\dbar_s^{(p)}$ of $\dbar_{sm}$, which is the graph
closure of $\dbar_{sm}$ in $L^p_{0,q}(X) \times L^p_{0,q+1}(X)$, so
$\varphi \in \Dom \dbar_s^{(p)} \subset L^p_{0,q}(X)$, if there exists a sequence of smooth 
forms $\{\varphi_j\}_j \subset L^p_{0,q}(X)$ with support away from the singularity, i.e.,
$$\supp \varphi_j \cap \{0\} = \emptyset,$$
such that
\begin{eqnarray}\label{eq:dbars1}
\varphi_j \rightarrow \varphi \ \ \ &\mbox{ in }& \ \ L^p_{0,q}(X),\\
\dbar \varphi_j \rightarrow \dbar \varphi \ \ \ &\mbox{ in }& \ \ L^p_{0,q+1}(X)\label{eq:dbars2}
\end{eqnarray}
as $j\rightarrow \infty$.

On smooth varieties, these extensions coincide by Friedrichs' extension lemma, see for example
\cite{LiMi}*{Theorem~V.2.6}. From our results below, it will follow that in $L^2$ on the
$A_1$-singularity, the $\dbar_w$ and $\dbar_s$ operators do indeed coincide. 
In $L^p$ for more general $p$, it is not clear to us whether the $\dbar_w$ and $\dbar_s$ operators
still coincide on the $A_1$-singularity. On other varieties, one can however write explicitly
examples of functions which are in $\Dom \dbar_w$, but not in $\Dom \dbar_s$, even in $L^2$.

\begin{ex}
Let $Z$ be the cusp
\begin{equation*}
Z = \{ (z,w) \in B_1(0) \mid z^3-w^2 = 0 \} \subseteq \C^2.
\end{equation*}
Then, using the normalization $\pi : t \mapsto (t^2,t^3)$
of $Z$, one can verify that the function $\varphi = z/w$ is in $L^2(Z)$, and $\varphi$ is $\dbar$-closed, so
$\varphi \in \Dom \dbar_w \subseteq L^2(Z)$. By \cite{RuSerre}*{Theorem~1.2}, the kernel of the $\dbar_s$-operator on
$\Dom \dbar_s \subseteq L^2(Z)$ is exactly $\widehat{\Ok}(Z)$, the ring of weakly holomorphic functions on $Z$.
Thus, if $\varphi \in \Dom \dbar_s$, we would thus get that $\varphi \in \widehat{\Ok}(Z)$ since $\dbar \varphi = 0$.
However, since $\pi^* \varphi = 1/t$, one gets that $\varphi$ is not locally bounded near $0$, so it is not
weakly holomorphic, and thus, $\varphi \notin \Dom \dbar_s \subseteq L^2(Z)$, but $\varphi \in \Dom \dbar_w \subseteq L^2(Z)$.
\end{ex}

For the strong $\dbar$-operator, we obtain the following.

\begin{thm}\label{thm:main4}
Let $\mathcal{K}$ be the integral operator from Theorem \ref{thm:main1}
and let $\varphi\in \Dom \dbar_w \subseteq L^2_{0,q}(X')$, $1\leq q \leq 2$. Then
\begin{eqnarray*}
\mathcal{K} \varphi &\in& \Dom\dbar_s \subset L^2_{0,q-1}(X).
\end{eqnarray*}
\end{thm}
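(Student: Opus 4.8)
The plan is to realize $u:=\mathcal{K}\varphi$ as a limit, in the graph norm on $L^2_{0,q-1}(X)\times L^2_{0,q}(X)$, of forms that manifestly lie in $\Dom\dbar_s$, and then to appeal to the closedness of $\dbar_s$. First I would record what is already available. By Theorem~\ref{thm:main1} we have $u\in L^2_{0,q-1}(X)$, and by the Koppelman formula of Theorem~\ref{thm:main3} (valid for $p=2$) the identity
\[
\dbar_w u=\varphi-\mathcal{K}\big(\dbar_w\varphi\big)
\]
holds as currents on $X$; since both terms on the right are in $L^2$ by Theorem~\ref{thm:main1}, this already shows $u\in\Dom\dbar_w\subseteq L^2_{0,q-1}(X)$. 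Because it is not known whether $\dbar_w$ and $\dbar_s$ coincide on $X$, the whole point is to exploit the extra regularity enjoyed by forms in the image of $\mathcal{K}$.

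Next I would cut $u$ off away from the singular point. Fixing a smooth $\chi\colon[0,\infty)\to[0,1]$ with $\chi\equiv0$ on $[0,1]$ and $\chi\equiv1$ on $[2,\infty)$, I set $\chi_\epsilon(z)=\chi(|z|/\epsilon)$, so that $\chi_\epsilon$ vanishes near $0$, equals $1$ for $|z|\ge 2\epsilon$, and $|\dbar\chi_\epsilon|\lesssim 1/|z|$ with support in the annulus $A_\epsilon=\{\epsilon<|z|<2\epsilon\}$. The form $\chi_\epsilon u$ then has support away from $0$, hence lives on the complex manifold $X^*=X\setminus\{0\}$; since there $\chi_\epsilon u$ and $\dbar(\chi_\epsilon u)$ are in $L^2$, and weak equals strong on a manifold by Friedrichs' lemma (\cite{LiMi}*{Theorem~V.2.6}), a mollification supported in a compact subset of $X^*$ produces smooth forms with support away from $0$ converging to $\chi_\epsilon u$ in the graph norm. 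Thus $\chi_\epsilon u\in\Dom\dbar_s$ for every $\epsilon>0$. By the Leibniz rule in the current sense,
\[
\dbar(\chi_\epsilon u)=\dbar\chi_\epsilon\wedge u+\chi_\epsilon\,\dbar_w u ,
\]
and as $\epsilon\to0$ one has $\chi_\epsilon u\to u$ and $\chi_\epsilon\,\dbar_w u\to\dbar_w u$ in $L^2$ by dominated convergence. Since $\dbar_s$ is closed, it remains only to prove that $\dbar\chi_\epsilon\wedge u\to0$ in $L^2$.

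The hard part will be exactly this last estimate. With only $u\in L^2$ the naive bound
\[
\|\dbar\chi_\epsilon\wedge u\|_{L^2}^2\lesssim\frac{1}{\epsilon^2}\int_{A_\epsilon}|u|^2\,dV
\]
need not tend to $0$, because of the factor $\epsilon^{-2}$. The crucial extra ingredient I would use is a gain of integrability for $\mathcal{K}$ that is already implicit in the pointwise kernel bounds behind Theorem~\ref{thm:main1}: the Koppelman kernel has a singularity of order three on the real four-dimensional $X$, so a Hardy--Littlewood--Sobolev argument gives $u=\mathcal{K}\varphi\in L^4_{0,q-1}(X)$ whenever $\varphi\in L^2$. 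Granting this, Hölder's inequality together with $|A_\epsilon|\lesssim\epsilon^4$ yields
\[
\frac{1}{\epsilon^2}\int_{A_\epsilon}|u|^2\,dV\le\frac{1}{\epsilon^2}\Big(\int_{A_\epsilon}|u|^4\,dV\Big)^{1/2}|A_\epsilon|^{1/2}\lesssim\Big(\int_{A_\epsilon}|u|^4\,dV\Big)^{1/2},
\]
and the right-hand side tends to $0$ since $u\in L^4$ and $|A_\epsilon|\to0$. This establishes the desired convergence and hence $u\in\Dom\dbar_s$.

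I expect the genuine obstacle to be precisely the integrability gain $\mathcal{K}\colon L^2\to L^4$: once $u$ is known to be a little better than $L^2$ near $0$, the cutoff--and--Friedrichs scheme and the appeal to closedness of $\dbar_s$ are routine. The exponent $4$ is moreover borderline, since the energy $\epsilon^{-2}\int_{A_\epsilon}|u|^2$ just barely vanishes at $L^4$ and a smaller gain $u\in L^q$ with $2<q<4$ would require an a priori decay rate of $\int_{A_\epsilon}|u|^q$ that is not automatic; this is why the sharp kernel analysis underlying Theorem~\ref{thm:main1} is indispensable here.
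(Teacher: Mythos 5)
Your reduction -- cut off near the singular point, invoke Friedrichs' lemma on $X^*$ to see that the truncated forms lie in $\Dom\dbar_s$, and then pass to the limit using closedness -- is exactly the skeleton of the paper's proof. The gap is in the one step you yourself flag as the ``genuine obstacle'': the claim that $\mathcal{K}\colon L^2_{0,q}(X')\to L^4_{0,q-1}(X)$ follows from Hardy--Littlewood--Sobolev because ``the kernel has a singularity of order three on the real four-dimensional $X$.'' This is not established anywhere in the paper (Theorem~\ref{thm:main1} gives only $L^p\to L^p$), and it is not routine, because near the singular point the kernel is \emph{not} comparable to $\|\zeta-z\|^{-3}$: it contains the structure form $\omega_{X'}=d\zeta_1\wedge d\zeta_2/(-2\zeta_3)$, and the correct model, after pulling back to the two-sheeted covering, is a kernel of the form $\|w\|^2\,/\bigl(\|w-x\|^3\|w+x\|^3\bigr)$ acting on $L^2(\tilde D')$ with the target weighted by $\xi^{4/p}$. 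A direct Cauchy--Schwarz estimate using Lemma~\ref{lem:estimateCn3} gives $|\pi^*\mathcal{K}\varphi(x)|\lesssim\|\varphi\|_{L^2}\,\|x\|^{-2}$ near $x=0$, i.e.\ $\|x\|\,\pi^*\mathcal{K}\varphi\lesssim\|x\|^{-1}$, which is exactly the critical power failing to be in $L^4(\tilde D)$ in real dimension four. So the $L^4$ gain sits precisely at the endpoint where your argument has no room: as you note, any weaker gain $L^2\to L^q$ with $q<4$ makes $\epsilon^{-2}\int_{A_\epsilon}|u|^2$ diverge. You would need to actually prove the endpoint bound, and nothing in the kernel estimates of Section~\ref{sec:covering} delivers it.

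The paper sidesteps this entirely by two devices you did not use. First, instead of the standard cutoff $\chi_\epsilon$ with $|\dbar\chi_\epsilon|\sim 1/\epsilon$, it uses the Pardon--Stern cutoffs $\mu_k$ of \eqref{eq:cutoff1}, for which $|\dbar\mu_k|\lesssim \chi_k(\|z\|)/(\|z\|\,|\log\|z\||)$; the extra logarithm is what makes the error term controllable with only $L^2$ information, via the weighted estimate of Lemma~\ref{lem:cutoff1} (which rests on Lemma~\ref{lem:estimateCn4}). Second, the paper does \emph{not} show that $\dbar\mu_k\wedge\mathcal{K}\varphi\to 0$ in $L^2$; it shows only that this sequence is \emph{uniformly bounded} in $L^2$, and then concludes $\mathcal{K}\varphi\in\Dom\dbar_s$ by the duality (weak-compactness) argument of \cite{RuSerre}, Lemma~4.1 -- uniform boundedness of $\dbar\phi_k$ plus strong convergence of $\phi_k$ already places the limit in the domain of the closed operator. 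If you want to salvage your scheme, the cheapest repair is to adopt these two ingredients: replace $\chi_\epsilon$ by $\mu_k$, aim only for a uniform bound on $\dbar\mu_k\wedge\mathcal{K}\varphi$, and prove that bound by the weighted covering estimate rather than by an integrability gain.
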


Since $\mathcal{K}$ maps $\Dom \dbar_w \to \Dom \dbar_s$, and $\dbar$ maps
$\Dom \dbar_w \to \Dom \dbar_w$ and $\Dom \dbar_s \to \Dom \dbar_s$, we get as a
corollary of Theorem~\ref{thm:main3} and Theorem~\ref{thm:main4} the following.

\begin{cor}\label{cor:main4}
In $L^2$ on the $A_1$-singularity, the $\dbar_s$ and $\dbar_w$ operators coincide.
\end{cor}

\medskip

The setting in \cite{AS2} is rather different compared to this article, since here, we are mainly concerned
with forms on $X$ with coefficients in $L^p$, while in \cite{AS2}, the type of forms
considered, denoted $\mathcal{W}_q(X)$, are generically smooth, and with in a certain sense ``holomorphic singularities''
(like for example the principal value current $1/f$ of a holomorphic function $f$),
but there is no direct growth condition on the singularities.
For the precise definition of the class $\mathcal{W}_q(X)$, we refer to \cite{AS2}. In the setting of \cite{AS2},
the $\dbar$-operator $\dbar_X$ considered there is different from the ones considered
here, $\dbar_s$ and $\dbar_w$. For currents in $\mathcal{W}_q(X)$, one can define the product with certain ``structure forms''
$\omega_X$ associated to the variety. A current $\mu \in \mathcal{W}_q(X)$ lies in $\Dom \dbar_X$ if there exists a current
$\tau \in \mathcal{W}_{q+1}(X)$ such that $\dbar (\mu \wedge \omega) = \tau \wedge \omega$ for all structure forms $\omega$.
(To be precise, this formulation works when $X$ is Cohen-Macaulay, as is the case for example here, when $X$ is a hypersurface).

Combining our results about $\mathcal{K}$ and the $\dbar_w$- and $\dbar_s$-operator
with some properties about the $\mathcal{W}_X$-sheaves,
we obtain results similar to Theorem~\ref{thm:main4} for the $\dbar_X$-operator,
answering in part a question in \cite{AS2} (see the paragraph at the end of page 288 in \cite{AS2}).

\begin{thm}\label{thm:main5}
Let $\mathcal{K}$ be the integral operator from Theorem \ref{thm:main1}
and let $\varphi\in \Dom \dbar^{(2)}_w \cap \mathcal{W}_q(X')$, $1\leq q \leq 2$. Then
\begin{eqnarray*}
\mathcal{K} \varphi &\in& \Dom\dbar_X.
\end{eqnarray*}
\end{thm}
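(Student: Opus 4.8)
The plan is to use Theorem~\ref{thm:main4} to move into the strong extension and then match the resulting $L^2$-identity against the structure-form description of $\dbar_X$. Write $\mu := \mathcal{K}\varphi$, a $(0,q-1)$-current on $X$. By Theorem~\ref{thm:main4} we have $\mu \in \Dom\dbar_s \subseteq L^2_{0,q-1}(X)$, and since the strong extension is contained in the weak one, $\dbar_s\mu = \dbar_w\mu$. Applying the Koppelman formula of Theorem~\ref{thm:main3} (legitimate because $\varphi \in \Dom\dbar_w^{(2)}$) gives
\begin{equation*}
    \dbar_s\mu \;=\; \dbar_w\mathcal{K}\varphi \;=\; \varphi - \mathcal{K}\big(\dbar_w\varphi\big).
\end{equation*}
By the description of $\dbar_X$ it then suffices to verify two things: that $\mu \in \mathcal{W}_{q-1}(X)$ and $\dbar_s\mu \in \mathcal{W}_q(X)$, and that $\dbar(\mu\wedge\omega) = (\dbar_s\mu)\wedge\omega$ for every structure form $\omega$; the current $\dbar_s\mu$ is then the required $\tau$. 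Note that when $q=2$ the form $\dbar_w\varphi$ has bidegree $(0,3)$ and hence vanishes on the two-dimensional variety $X$, so the identity collapses to $\dbar_s\mu = \varphi \in \mathcal{W}_2(X)$ and the essential content is the case $q=1$.

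For the membership in the $\mathcal{W}$-classes I would first invoke the mapping property of $\mathcal{K}$ on the $\mathcal{W}$-sheaves from \cite{AS2}: since $\varphi \in \mathcal{W}_q(X')$ one gets $\mu = \mathcal{K}\varphi \in \mathcal{W}_{q-1}(X)$. In the displayed identity $\varphi \in \mathcal{W}_q(X')$ is given, so it remains to place $\mathcal{K}(\dbar_w\varphi)$ in $\mathcal{W}_q(X)$. For this I would establish, as a separate lemma, that $\dbar_w\varphi \in \mathcal{W}_{q+1}(X')$: on $X^* = X\setminus\{0\}$ the structure form $\omega$ is $\dbar$-closed and $\varphi$ is generically smooth, so $\dbar\varphi$ agrees there with the pseudomeromorphic $\dbar$ of $\varphi$, which lies in $\mathcal{W}_{q+1}$; the hypothesis $\dbar_w\varphi \in L^2$ is used to rule out any additional current supported at the singular point $0$, by a removable-singularity argument across $0$ (whose codimension in $X$ is two). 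Granting $\dbar_w\varphi \in \mathcal{W}_{q+1}(X')$, the $\mathcal{W}$-mapping property of $\mathcal{K}$ yields $\mathcal{K}(\dbar_w\varphi) \in \mathcal{W}_q(X)$, and hence $\dbar_s\mu \in \mathcal{W}_q(X)$.

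For the structure-form identity I would exploit exactly the approximation built into $\dbar_s$. Choose smooth $(0,q-1)$-forms $\mu_j$ with $\supp\mu_j \cap \{0\} = \emptyset$, $\mu_j \to \mu$ in $L^2_{0,q-1}(X)$ and $\dbar\mu_j \to \dbar_s\mu$ in $L^2_{0,q}(X)$. Each $\mu_j$ is supported in $X^*$, where $\omega$ is smooth and $\dbar$-closed, so $\dbar(\mu_j\wedge\omega) = \dbar\mu_j\wedge\omega$ as currents on $X$. Passing to the limit and using continuity of $\dbar$ on currents gives $\dbar(\mu\wedge\omega) = (\dbar_s\mu)\wedge\omega$, which together with $\dbar_s\mu \in \mathcal{W}_q(X)$ exhibits $\mu = \mathcal{K}\varphi \in \Dom\dbar_X$. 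The limit is justified because, from the explicit Poincar\'e-residue description of $\omega$ on the $A_1$-singularity, one checks that $\omega \in L^2_{loc}(X)$, so $\mu_j\wedge\omega \to \mu\wedge\omega$ and $\dbar\mu_j\wedge\omega \to (\dbar_s\mu)\wedge\omega$ as currents by Cauchy--Schwarz.

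The step I expect to be the main obstacle is the reconciliation of the $L^2$-theory with the pseudomeromorphic structure underlying $\mathcal{W}$ and $\dbar_X$: concretely, the lemma $\dbar_w\varphi \in \mathcal{W}_{q+1}$ of the second paragraph, i.e.\ that the weak $\dbar$ of a $\mathcal{W}$-current with $L^2$ weak $\dbar$ does not leave the class $\mathcal{W}$. This is where one must argue that the distributional $\dbar_w\varphi$ carries no hidden contribution at $0$ and that its intrinsic pseudomeromorphic $\dbar$ again has the generically smooth, holomorphic-pole structure required by $\mathcal{W}$. By contrast, the limiting argument of the third paragraph is comparatively routine once $\omega \in L^2_{loc}$ is checked, and it makes transparent why $\dbar_s$ rather than $\dbar_w$ is indispensable: the strong extension supplies approximants supported off the singularity, for which the structure-form identity holds automatically on $X^*$, whereas for a general element of $\Dom\dbar_w$ no such approximation is available.
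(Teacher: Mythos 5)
Your overall strategy coincides with the paper's: both proofs reduce the statement to (a) $\mathcal{K}\varphi\in\Dom\dbar_s$ via Theorem~\ref{thm:main4}, (b) membership of $\mathcal{K}\varphi$ and $\dbar\mathcal{K}\varphi$ in the $\mathcal{W}$-sheaves, and (c) the identity $\dbar(\mathcal{K}\varphi\wedge\omega_X)=(\dbar\mathcal{K}\varphi)\wedge\omega_X$, obtained by passing to the limit along the $\dbar_s$-approximating sequence and using $\omega_X\in L^2_{2,0}(X)$; your third paragraph is essentially the paper's argument verbatim. The one genuine divergence is in step (b), exactly at the point you flag as the main obstacle. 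You propose to prove $\dbar_w\varphi\in\mathcal{W}_{q+1}(X')$ by a removable-singularity argument and then push it through the $\mathcal{W}$-mapping property of $\mathcal{K}$ applied to the Koppelman identity. The paper short-circuits this: since $\mathcal{K}\varphi\in\mathcal{W}(X)\subseteq\PM(X)$ (by \cite{AS2}, Proposition~1.5) and $\PM$ is closed under $\dbar$, one has $\dbar\mathcal{K}\varphi\in\PM(X)$; by Theorems~\ref{thm:main1} and~\ref{thm:main3} it also lies in $L^2(X)$; and a pseudomeromorphic current with $L^2$ coefficients has the standard extension property by dominated convergence, hence lies in $\mathcal{W}$. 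This observation is the clean form of the lemma you were missing --- it would equally well prove your version, since $\dbar_w\varphi\in\PM(X')\cap L^2(X')$ for the same reasons (there is no ``hidden contribution at $0$'' to rule out: the hypothesis $\varphi\in\Dom\dbar_w^{(2)}$ already says the full distributional $\dbar\varphi$ is the $L^2$ form). So your route is viable, but the detour through $\mathcal{K}(\dbar_w\varphi)$ is unnecessary, and the step you expected to be hardest is a one-line consequence of $\PM\cap L^2\subseteq\mathcal{W}$.
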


For a hypersurface $X$, any structure form is an invertible holomorphic function times
the Poincar\'e-residue of $d\zeta_1 \wedge d\zeta_2 \wedge d\zeta_3/h$, where $h$
is the defining function of $X$. In our case, $h(\zeta) = \zeta_1 \zeta_2 - \zeta_3^2$, and 
the Poincar\'e residue $\omega_X$ can be defined for example as
\begin{equation*}
    \omega_X = \left.\frac{dz_1 \wedge dz_2}{-2\zeta_3} \right|_{X},
\end{equation*}
which one can verify lies in $L^2_{2,0}(X)$.
The conclusions of Theorem~\ref{thm:main5} means that
\begin{equation} \label{eq:thm5concl}
    \dbar (\mathcal{K} \varphi \wedge \omega_X) = (\dbar \mathcal{K} \varphi) \wedge \omega_X.
\end{equation}
Since $\varphi \in \Dom \dbar_w \subseteq L^2(X')$, by the Koppelman formula for $\dbar_w$ on $L^2$,
we get that $\dbar K \varphi \in L^2(X)$. Thus, since $\omega_X \in L^2_{loc}(X)$, the products $\mathcal{K}\varphi \wedge \omega_X$
and $(\dbar \mathcal{K} \varphi) \wedge \omega_X$ exist (almost-everywhere) pointwise and lie in $L^1_{loc}(X)$.

\medskip

The results of the present paper have to a large extent been generalized in \cite{LR2} to so-called affine cones over smooth projective
complete intersections, which in particular include the $A_1$-singularity. The methods used in \cite{LR2},
which rely on estimates directly on the variety, are rather different to the methods here, which rely
on estimates on a finite branched covering. In addition to the fact that we obtain here stronger results in 
Theorem~\ref{thm:main4} and as a consequence also stronger results in Corollary~\ref{cor:main4} and
Theorem~\ref{thm:main5}, compared to the results in \cite{LR2} on the $A_1$-singularity, we also believe that
the techniques used in this article might still be of interest when trying to extend our results to more
general varieties. In particular, in preliminary work about Koppelman formulas on surfaces with canonical
singularities, which include the $A_1$-singularity, it appears that a combination of these two techniques
is useful.

\medskip

The $A_1$-singularity has in many ways very mild singularities, and one way which this manifests
itself is that it satisfies the conditions for being treated in almost all articles about the
solvability of the $\dbar$-equation on singular varieties in recent years. 

The following results about that the $\dbar$-equation $\dbar f = g$ is solvable on the $A_1$-singularity can be found in earlier works.

\begin{itemize}
    \item $f \in C_{0,q}^\infty(X^*)$ if $g \in C_{0,q-1}^{\infty}(X')$ is treated in \cite{HePo}.
    \item $f \in C_{0,1}^\alpha(X)$ for $\alpha < 1/2$ if $g \in L^\infty_{0,1}(X')\cap C^0(X')$ is treated in \cite{FoGa}.
    \item $f \in C_{0,q}^{1/2}(X)$ if $g \in L^\infty_{0,q}(X)$ is treated in \cite{RuppThesis}.
    \item $f \in C_{0,1}^{\alpha}(X)$ for $\alpha < 1$ if $g \in L^\infty_{0,1}(X')$ and $g$ has compact support is treated in \cite{RuZeI}.
    \item $f \in L^p_{0,1}(X)$ for $p > 4/3$ if $g \in L^p_{0,1}(X)$ is treated in \cite{RuMatZ2}, where the $\dbar$-operator considered is the
        $\dbar_w$-operator. In addition, it is shown that for $1 \leq p < 4/3$, the $\dbar_w$-cohomology in $L^p$ is non-zero.
    \item $f \in L^2_{0,q}(X)$ if $g \in L^2_{0,q-1}(X)$ is treated in \cite{RuSerre}, where the $\dbar$-operator considered is the $\dbar_s$-operator.
\end{itemize}

Note that here we just refer to the results concerning the $A_1$-singularity in those articles, while all
the articles treat results about the $\dbar$-equation on other varieties as well.

\medskip

This paper is organized as follows. In Section~\ref{sec:covering}, we describe a $2$-sheeted covering
of the $A_1$-singularity, relations between $L^p$-forms on $X$ and on the covering, and describe various integral
estimates on this covering. In Section~\ref{sec:main1}, we recall how the Koppelman operators from \cite{AS2}
are constructed, and prove the first main result, Theorem~\ref{thm:main1}. In Section~\ref{sec:main2}, we prove
an estimate for a cut-off procedure, Theorem~\ref{thm:main2}, which is then used in the proof of Theorem~\ref{thm:main3},
about the $\dbar_w$-operator. In Section~\ref{sec:main3}, we then prove Theorem~\ref{thm:main4}, about the $\dbar_s$-operator,
and Theorem~\ref{thm:main5}, about the $\dbar_X$-operator. Finally, in Appendix~\ref{sec:appendix}, we collect
various integral kernel estimates on $\C^n$, which we have made use of in Section~\ref{sec:covering} for obtaining integral estimates
on the $2$-sheeted covering.

\bigskip
{\bf Acknowledgments.}
This research was supported by the Deutsche Forschungsgemeinschaft (DFG, German Research Foundation), 
grant RU 1474/2 within DFG's Emmy Noether Programme.
The first author was partially supported by the Swedish Research Council.

\bigskip
\section{The $2$-sheeted covering of the $A_1$-singularity} \label{sec:covering}

\smallskip
\subsection{Some notation}

Let us recall shortly that
we will consider the variety defined by $\{ g(\zeta) = 0 \}$, where $g(\zeta) = \zeta_1 \zeta_2 - \zeta_3^2$,
on two different balls in $\C^3$.
We let $D = B_1(0) \subseteq \C^3$ and $D' = B_{1+\epsilon}(0) \subseteq \C^3$ for some $\epsilon > 0$, and we define:
\begin{equation*}
    X = \{ \zeta \in D \mid g(\zeta) = 0 \} \text{ and } X' = \{ \zeta \in D' \mid g(\zeta) = 0 \}.
\end{equation*}
Note that $X$ and $X'$ can be covered by the $2$-sheeted covering map 
$$\pi: (w_1,w_2) \mapsto (w_1^2,w_2^2,w_1w_2),$$
which is branched just in the origin.
Let
\begin{equation*}
\tilde{D} := \pi^{-1}(D)\ \mbox{ and }\ \tilde{D}' = \pi^{-1}(D').
\end{equation*}
In this section, we consider the $2$-sheeted covering maps
$\pi : \tilde{D} \to X$ and $\pi : \tilde{D}' \to X'$, respectively.
We will use this covering to estimate the integral operators of Andersson--Samuelsson
by use of certain integral estimates in $\C^2$ which are adopted to our particular situation.
Basic estimates in $\C^n$ which are needed are postponed to Appendix~\ref{sec:appendix},
for convenience of the reader.

\smallskip
\subsection{Pullback of $\|\eta\|^2=\|\zeta-z\|^2$}

Here, we prove an estimate of how the pullback of $\|\eta\|^2$ to the covering
behaves, where $\eta = \zeta-z$, which will be fundamental in obtaining our estimates for the pullback
of the integral kernels.

We will as above let $w=(w_1,w_2)$ in the covering correspond to the $\zeta$-variables on $\C^3$
by 
$$\pi(w_1,w_2) = (w_1^2,w_2^2,w_1w_2) = \zeta,$$
and we will let $x=(x_1,x_2)$ correspond to the $z$-variables on $\C^3$, i.e., 
$$\pi(x_1,x_2) = (x_1^2,x_2^2,x_1 x_2) = z.$$
We let
\begin{equation*}
    \alpha^2 = \pi^* \|\eta\|^2 = |w_1^2-x_1^2|^2 + |w_2^2-x_2^2|^2 + |w_1w_2-x_1x_2|^2,
\end{equation*}
and
\begin{equation*}
    \beta_-^2 = |w_1-x_1|^2 + |w_2-x_2|^2=\|w-x\|^2,
\end{equation*}
and
\begin{equation*}
\beta_+^2 = |w_1+x_1|^2 + |w_2+x_2|^2=\|w+x\|^2.
\end{equation*}

\begin{lma} \label{lmanorm}
    \begin{equation*}
        \alpha^2 \leq \beta_+^2\beta_-^2 \leq 4\alpha^2.
    \end{equation*}
\end{lma}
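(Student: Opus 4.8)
The plan is to pass to the ``sum and difference'' variables in which $\beta_\pm$ are literally defined, and to show that both inequalities reduce to elementary estimates among the moduli of four complex numbers. Concretely, I would set
\[
a = w_1 - x_1, \quad b = w_2 - x_2, \quad c = w_1 + x_1, \quad d = w_2 + x_2,
\]
so that by definition $\beta_-^2 = |a|^2 + |b|^2$ and $\beta_+^2 = |c|^2 + |d|^2$, and hence
\[
\beta_+^2 \beta_-^2 = |a|^2|c|^2 + |a|^2|d|^2 + |b|^2|c|^2 + |b|^2|d|^2.
\]

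The key step is to rewrite $\alpha^2$ in the same variables. The two ``pure'' terms factor immediately via $w_i^2 - x_i^2 = (w_i - x_i)(w_i + x_i)$, giving $|w_1^2 - x_1^2|^2 = |a|^2|c|^2$ and $|w_2^2 - x_2^2|^2 = |b|^2|d|^2$. The mixed term is the only one requiring a computation: one checks the polarization-type identity
\[
w_1 w_2 - x_1 x_2 = \tfrac12\big( (w_1-x_1)(w_2+x_2) + (w_2-x_2)(w_1+x_1)\big) = \tfrac12(ad + bc),
\]
so that altogether
\[
\alpha^2 = |a|^2|c|^2 + |b|^2|d|^2 + \tfrac14 |ad + bc|^2.
\]
I expect this mixed term to be the only real obstacle: unlike the diagonal terms, it does not split as a product of a factor of $\beta_-$ with a factor of $\beta_+$, and the identity above is precisely what makes the comparison tractable.

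With these formulas in hand, both inequalities become elementary. For the left inequality I would compute
\[
\beta_+^2\beta_-^2 - \alpha^2 = |a|^2|d|^2 + |b|^2|c|^2 - \tfrac14|ad+bc|^2,
\]
and then use $|ad + bc|^2 \le 2(|ad|^2 + |bc|^2) = 2(|a|^2|d|^2 + |b|^2|c|^2)$ to see the right-hand side is nonnegative. For the right inequality I would compute
\[
4\alpha^2 - \beta_+^2\beta_-^2 = 3|a|^2|c|^2 + 3|b|^2|d|^2 + |ad+bc|^2 - |a|^2|d|^2 - |b|^2|c|^2,
\]
bound the cross term from below by $|ad+bc|^2 \ge |a|^2|d|^2 + |b|^2|c|^2 - 2|a|\,|b|\,|c|\,|d|$ (the worst case of $2\Re(ad\,\overline{bc})$), and finish with the estimate $3p^2 + 3q^2 \ge 2pq$ applied to $p = |a|\,|c|$ and $q = |b|\,|d|$, valid since $pq \ge 0$. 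This yields $4\alpha^2 - \beta_+^2\beta_-^2 \ge 0$ and completes the proof.
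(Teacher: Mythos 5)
Your proof is correct, and it is essentially the same argument as the paper's: both expand $\beta_+^2\beta_-^2$ into the four products $|a|^2|c|^2+|b|^2|d|^2+|a|^2|d|^2+|b|^2|c|^2$, match the first two with the pure terms of $\alpha^2$ via $w_i^2-x_i^2=(w_i-x_i)(w_i+x_i)$, and handle the cross terms through the polarization identity $ad+bc=2(w_1w_2-x_1x_2)$ (the paper phrases this as the parallelogram law, splitting $|ad|^2+|bc|^2$ into $2|w_1w_2-x_1x_2|^2+2|x_1w_2-w_1x_2|^2$). The only difference is cosmetic: for the upper bound the paper proves $|x_1w_2-w_1x_2|^2\le\alpha^2$ via an explicit algebraic identity, while you use elementary estimates on $|ad+bc|^2$; both are valid.
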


\begin{proof}
    Using the parallelogram identity 
    $$|a-b|^2 + |a+b|^2 = 2(|a|^2 + |b|^2)$$
    we get
    \begin{eqnarray*}
        \beta_+^2\beta_-^2 &=& |w_1^2-x_1^2|^2 + |w_2^2-x_2^2|^2 + |(x_1-w_1)(x_2+w_2)|^2\\
        && + |(x_1+w_1)(x_2-w_2)|^2 \\
        &=& |w_1^2-x_1^2|^2 + |w_2^2-x_2^2|^2 + |(x_1x_2-w_1w_2) + (x_1w_2-w_1x_2)|^2\\
        && + |(x_1x_2-w_1w_2)-(x_1w_2-w_1x_2)|^2 \\
        &=&  |w_1^2-x_1^2|^2 + |w_2^2-x_2^2|^2 + 2|x_1x_2-w_1w_2|^2 + 2|x_1w_2-w_1x_2|^2\\
        &\geq& |w_1^2-x_1^2|^2 + |w_2^2-x_2^2|^2 + |w_1w_2-x_1x_2|^2 = \alpha^2,
    \end{eqnarray*}
    so the first inequality is proved.
    To prove the second inequality, we note that by the equality
\begin{align*}
    \beta_+^2\beta_-^2 = |w_1^2-x_1^2|^2 + |w_2^2-x_2^2|^2 + 2|x_1x_2-w_1w_2|^2 + 2|x_1w_2-w_1x_2|^2
\end{align*}
    from the equation above, it is enough to prove that
    \begin{equation*}
        |x_1w_2-w_1x_2|^2 \leq \alpha^2.
    \end{equation*}
    To prove this, we use the triangle inequality and the inequality $|ab| \leq (1/2)(|a|^2 + |b|^2)$:
    \begin{eqnarray*}
        |x_1w_2-w_1x_2|^2 &=& |(x_1w_2-w_1x_2)^2| = |x_1^2w_2^2 + w_1^2x_2^2 - 2x_1w_2w_1x_2| \\
        &=& |x_1^2w_2^2 + w_1^2x_2^2 - w_1^2w_2^2 - x_1^2x_2^2 + w_1^2w_2^2 + x_1^2x_2^2 - 2x_1w_2w_1x_2| \\
        &=& |(w_1^2-x_1^2)(x_2^2-w_2^2) + (w_1w_2-x_1x_2)^2|  \\
        &\leq& (1/2)|w_1^2-x_1^2|^2 + (1/2)|w_2^2-x_2^2|^2 + |(w_1w_2-x_1x_2)^2|  \\
        &\leq& |w_1^2-x_1^2|^2 + |w_2^2-x_2^2|^2 + |w_1w_2-x_1x_2|^2.
    \end{eqnarray*}
\end{proof}

\smallskip
\subsection{Integral kernel estimates for the covering}

We will now provide fundamental integral estimates for the pull-back under $\pi$
of the principal parts of the integral formulas of Andersson--Samuelsson.
Let $dV(w)$ and $dV(x)$ denote the standard Euclidean volume forms on $\C^2_{w}$ and $\C^2_{x}$.
We denote the different coordinates of $\C^2$ by the variables $w=(w_1,w_2)$ and $x=(x_1,x_2)$.

\begin{lma} \label{lmakerninteg}
    Let $K$ be an integral kernel on ${\tilde{D}'}\times {\tilde{D}} \subset \subset \C_w^2 \times \C_x^2$ of the form
   \begin{equation*}
       K(w,x) = \frac{|f|}{\alpha^3},
   \end{equation*}
   where $f$ is one of the functions $w_1^2,w_2^2,w_1w_2,x_1^2, x_2^2, x_1x_2$. 
   Let $\gamma > -6$ if $f\in \{w_1^2,w_1w_2,w_2^2\}$ and $\gamma>-4$ if $f\in \{x_1^2,x_1x_2,x_2^2\}$.
   Then there exists a constant $C_\gamma>0$ such that
   \begin{align}
       \label{eqkerninteg1} I_1(x) &:= \int_{\tilde{D}'} \|w\|^\gamma K(w,x) dV(w) \leq C_\gamma
       \left\{\begin{array}{ll}
1 & \ ,\ \gamma>0,\\
1+ \big|\log \|x\|\big| &\ ,\ \gamma=0,\\
\|x\|^\gamma &\ ,\ \gamma<0,
\end{array}\right.
   \end{align}
   for all $x\in\ \tilde{D}$ with $x\neq 0$.
\end{lma}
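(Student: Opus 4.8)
The plan is to reduce the whole estimate to a single ``master integral'' by using Lemma~\ref{lmanorm} to simplify $\alpha^3$ and by crudely bounding the numerator $|f|$. From Lemma~\ref{lmanorm} we have $\beta_+\beta_-\le 2\alpha$, hence $\alpha^3\ge \tfrac18\beta_+^3\beta_-^3=\tfrac18\|w+x\|^3\|w-x\|^3$, so that
\begin{equation*}
    K(w,x)=\frac{|f|}{\alpha^3}\le 8\,\frac{|f|}{\|w+x\|^3\|w-x\|^3}.
\end{equation*}
For the numerator, if $f\in\{w_1^2,w_1w_2,w_2^2\}$ then $|f|\le\|w\|^2$ (using $|w_1w_2|\le\tfrac12(|w_1|^2+|w_2|^2)$), while if $f\in\{x_1^2,x_1x_2,x_2^2\}$ then $|f|\le\|x\|^2$. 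Thus everything reduces to estimating
\begin{equation*}
    J_\delta(x):=\int_{\tilde{D}'}\frac{\|w\|^\delta}{\|w+x\|^3\|w-x\|^3}\,dV(w),
\end{equation*}
with $\delta=\gamma+2$ in the first case, and with an extra factor $\|x\|^2$ and $\delta=\gamma$ in the second; in both cases the hypothesis on $\gamma$ becomes exactly $\delta>-4$.

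The heart of the proof is the estimate
\begin{equation*}
    J_\delta(x)\le C_\delta
    \begin{cases}
    1 & \delta>2,\\
    1+\big|\log\|x\|\big| & \delta=2,\\
    \|x\|^{\delta-2} & -4<\delta<2,
    \end{cases}
\end{equation*}
which I would prove by splitting $\tilde{D}'$ according to the size of $\|w\|$ relative to $\|x\|$, using three regions. In region $(A)$, $\|w\|\le\|x\|/2$, both $\|w\pm x\|$ are comparable to $\|x\|$, so the integral is comparable to $\|x\|^{-6}\int_{\|w\|\le\|x\|/2}\|w\|^\delta\,dV(w)\sim\|x\|^{\delta-2}$, where integrability at the origin is exactly the condition $\delta>-4$. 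In region $(B)$, $\|w\|\ge 2\|x\|$, both $\|w\pm x\|$ are comparable to $\|w\|$, the integrand behaves like $\|w\|^{\delta-6}$, and the radial integral $\int_{2\|x\|}^{R}r^{\delta-3}\,dr$ produces precisely the three regimes (bounded for $\delta>2$, logarithmic for $\delta=2$, and $\sim\|x\|^{\delta-2}$ for $\delta<2$). In region $(C)$, the annulus $\|x\|/2\le\|w\|\le 2\|x\|$ where $\|w\|\sim\|x\|$ and the two poles $w=\pm x$ sit, I would use the symmetry $w\mapsto -w$ of the integrand to reduce to a neighborhood of the single pole $w=x$; there $\|w+x\|\ge\|x\|$ (which follows from $\|w+x\|+\|w-x\|\ge\|2x\|$ on the half where $w$ is closer to $x$), and integrating $\|w-x\|^{-3}$ over a ball of radius $\sim\|x\|$ contributes a factor $\sim\|x\|$, so that region $(C)$ is bounded by $\|x\|^\delta\cdot\|x\|^{-3}\cdot\|x\|=\|x\|^{\delta-2}$. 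The elementary radial integrals and single-pole estimates needed in each region are precisely the $\C^n$-estimates collected in the Appendix. Combining the three regions gives the displayed bound for $J_\delta$.

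Finally I would translate back. When $f\in\{w_1^2,w_1w_2,w_2^2\}$ we have $\delta=\gamma+2$, so the thresholds $\delta\gtrless 2$ become $\gamma\gtrless 0$, matching \eqref{eqkerninteg1} exactly, and admissibility $\delta>-4$ is $\gamma>-6$. When $f\in\{x_1^2,x_1x_2,x_2^2\}$ we have $I_1(x)\le C\|x\|^2 J_\gamma(x)$: for $\gamma<0$ this is $\|x\|^2\|x\|^{\gamma-2}=\|x\|^\gamma$; for $\gamma=0$ it is $\|x\|^2\|x\|^{-2}=1\le C(1+|\log\|x\||)$; and for $\gamma>0$ the extra factor $\|x\|^2$ dominates each $J_\gamma$-regime to give a bounded contribution, again matching \eqref{eqkerninteg1} with admissibility $\gamma>-4$. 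The main obstacle is the estimate of $J_\delta$, and specifically region $(C)$, where the poles $w=\pm x$ collide as $x\to 0$: it is there that the precise factorization $\alpha\sim\beta_+\beta_-$ from Lemma~\ref{lmanorm} and the symmetry of the integrand are essential, whereas it is region $(B)$ that is responsible for the logarithmic borderline at $\gamma=0$.
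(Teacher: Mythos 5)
Your proof is correct and follows essentially the same route as the paper: bound $|f|$ by $\|w\|^2$ or $\|x\|^2$, use Lemma~\ref{lmanorm} to replace $\alpha^3$ by $\|w-x\|^3\|w+x\|^3$, and reduce to the two-pole integral, which is exactly Lemma~\ref{lem:estimateCn3} of the Appendix with $n=2$, $\alpha=\beta=3$. Your three-region estimate of $J_\delta$ is just a (correct) variant of the paper's proof of that appendix lemma, which splits instead into balls about $0$, $x$, $-x$ and their complement.
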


\begin{proof}
We know by Lemma \ref{lmanorm} that $\alpha \gtrsim \|w-x\| \|w+x\|$,
and so
$$|I_1(x)| \lesssim \|x\|^{2-\delta} \int_{\tilde{D}'} \frac{\|w\|^{\delta+\gamma} dV(w)}{\|w-x\|^3 \|w+x\|^3},$$
where $\delta\in\{0, 2\}$. So, the assertion follows from the basic estimate, Lemma \ref{lem:estimateCn3},
by considering the different cases separately.
\end{proof}

\bigskip
By an elaboration of the argument of the generalization of Young's inequality for convolution
integrals in \cite{Range}*{Appendix B}, we then get the following lemma.

\begin{lma} \label{lmacoveringintegr}
    Let $\mathcal{K}$ be an integral operator defined by
    \begin{equation*}
        \mathcal{K} \varphi(x) = \int K(w,x) \varphi(w) dV(w),
    \end{equation*}
    acting on forms on $\tilde{D}'$ and returning forms on $\tilde{D}$, where $K$ is of the form
    \begin{equation*}
        K = \frac{gf}{\alpha^3},
    \end{equation*}
    where $g \in L^\infty({\tilde{D}'}\times {\tilde{D}})$ and $f$ is one of $w_1^2, w_1w_2, w_2^2, x_1^2, x_1x_2, x_2^2$.
    
   \medskip
    (i) Let $\frac{4}{3} < p \leq \infty$.
    Then $\mathcal{K}$ maps $\|w\|^{2-4/p} L^p({\tilde{D'}})$ continuously to $\|x\|^{-4/p} L^p({\tilde{D}})$, i.e., if $\|w\|^{4/p-2} \varphi \in L^p({\tilde{D'}})$, then
    $\|x\|^{4/p} \mathcal{K} \varphi \in L^p({\tilde{D}})$.
    
    \medskip
    (ii) Assume that $\|w\|^{-2} \varphi \in L^\infty({\tilde{D}'})$ and that    
    $\lim_{x\rightarrow 0} g(\cdot,x) =  g(\cdot,0)$ in $L^r(\tilde{D}')$ for some $r>2$.
    Then $\mathcal{K} \varphi$ is continuous at the origin.
\end{lma}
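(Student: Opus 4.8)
The plan is to derive both parts from the single‑variable kernel estimate of Lemma \ref{lmakerninteg}, together with its counterpart obtained by integrating in $x$ rather than $w$: since $\alpha^2$ is symmetric under the exchange $w\leftrightarrow x$, the proof of Lemma \ref{lmakerninteg} applies verbatim to $\int_{\tilde{D}}\|x\|^\gamma|K|\,dV(x)$, with the admissible ranges interchanged (now $\gamma>-6$ for $f\in\{x_1^2,x_1x_2,x_2^2\}$ and $\gamma>-4$ for $f\in\{w_1^2,w_1w_2,w_2^2\}$). Throughout we bound $|K|\le\|g\|_\infty|f|/\alpha^3$, so that in part (i) the factor $g$ enters only through its sup‑norm.

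For part (i) the idea is a weighted Schur test, i.e.\ the weighted form of the generalized Young inequality of \cite{Range}*{Appendix B}. Writing $\psi=\|w\|^{4/p-2}\varphi$, the assertion is the $L^p$‑boundedness of the operator with kernel $\tilde{K}(w,x)=\|x\|^{4/p}\|w\|^{2-4/p}|K(w,x)|$. I would run the Schur test with the power weight $h(w)=\|w\|^{-b}$: it suffices to find $b\ge 0$ and constants $C_1,C_2$ with $\int\tilde{K}(w,x)\|w\|^{-bp'}\,dV(w)\le C_1\|x\|^{-bp'}$ and $\int\tilde{K}(w,x)\|x\|^{-bp}\,dV(x)\le C_2\|w\|^{-bp}$, which then gives operator norm $\le C_1^{1/p'}C_2^{1/p}$. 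After factoring out the powers of $\|x\|$ (resp.\ $\|w\|$), the first integral is exactly Lemma \ref{lmakerninteg} with $\gamma_1=2-4/p-bp'$ and the second is its symmetric version with $\gamma_2=4/p-bp$. Since $\tilde{D},\tilde{D}'$ are bounded and $b\ge0$, each of the three output branches of Lemma \ref{lmakerninteg} is dominated by the required power of $\|x\|$ (resp.\ $\|w\|$) as soon as $\gamma_1,\gamma_2$ lie in the admissible ranges; the case $p=\infty$ is immediate from Lemma \ref{lmakerninteg} with $\gamma=2$.

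The substance of (i) is then the bookkeeping of these ranges. Distinguishing whether $f$ is a $w$‑ or an $x$‑monomial, the admissibility conditions are $\gamma_1>-6,\ \gamma_2>-4$ in the first case and $\gamma_1>-4,\ \gamma_2>-6$ in the second, and together with $b\ge0$ they cut out an interval of allowed $b$. A short computation shows this interval is nonempty precisely when $p>4/3$: the threshold comes from the case $f\in\{x_1^2,x_1x_2,x_2^2\}$, where the admissibility condition $\gamma_1>-4$ for the first Schur integral and the lower bound on $b$ forced by the second become compatible exactly at $p=4/3$. This range analysis is the point I expect to require the most care in (i).

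For part (ii) I would fix $\rho>0$ and split $\mathcal{K}\varphi(x)-\mathcal{K}\varphi(0)$ into the integrals over $\{\|w\|<\rho\}$ and $\{\|w\|\ge\rho\}$. On the near region, using $|\varphi|\lesssim\|w\|^2$ and $|g|\le\|g\|_\infty$, both $\mathcal{K}\varphi(x)$ and $\mathcal{K}\varphi(0)$ are controlled by $\int_{\|w\|<\rho}\|w\|^2|f|/\alpha^3\,dV(w)$, which a scaling argument (a localized form of Lemma \ref{lmakerninteg} with $\gamma=2$) bounds by $C\rho^2$ uniformly in $x$; hence the near contribution is $O(\rho^2)$ uniformly in $x$. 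On the far region, for $\|x\|<\rho/2$ the kernel is bounded and smooth, so writing the integrand difference as a term carrying $g(w,x)-g(w,0)$ plus a term carrying the variation of $f/\alpha^3$, the former tends to $0$ by the $L^r$‑convergence of $g$ (hence $L^1$‑convergence on the bounded far region) against the there‑bounded factor, and the latter by the uniform convergence of $f(w,x)/\alpha(w,x)^3$ on the compact far region; letting first $\rho\to0$ and then $x\to0$ yields continuity. The delicate point in (ii) is precisely this uniform‑in‑$x$ control near the origin: as $x\to0$ the two kernel singularities at $w=\pm x$ coalesce into the single singularity at $w=0$, where the limiting density behaves like $\|w\|^{-2}$, which is locally $r'$‑integrable exactly when $r>2$ — so that hypothesis is the natural one making $\mathcal{K}\varphi(0)$ well defined and the limit robust.
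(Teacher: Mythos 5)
Your proposal is correct, and it splits into two halves of different character relative to the paper. For part (i) you are essentially doing what the paper does: the paper's proof is the H\"older-factorization form of the generalized Young inequality (i.e.\ a Schur test with a power weight), fixing the test weight to $\|w\|^{2-4/p}$ so that the two integrals to be checked are exactly $\int\|w\|^{\gamma}|f|/\alpha^3\,dV(w)$ with $\gamma=(2p-4)/(p-1)>-4$ and $\int\|x\|^{2p}|f|/\alpha^3\,dV(x)$, both handled by Lemma~\ref{lmakerninteg} and its $w\leftrightarrow x$ symmetric counterpart — the same two ingredients your parametrized Schur test reduces to, with your free parameter $b$ replacing the paper's one fixed choice; your range bookkeeping (threshold $p=4/3$ coming from the $x$-monomial case) checks out. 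For part (ii) your route is genuinely different: the paper keeps the integral over all of $\tilde{D}'$ and splits the \emph{kernel} difference into a denominator-variation term (estimated by a Taylor expansion of $\|w-x\|^3\|w+x\|^3-\|w\|^6$ plus Lemma~\ref{lmakerninteg}) and a $g$-variation term $\int|g(w,x)-g(w,0)|\,\|w\|^{-2}dV(w)$, where $r>2$ enters essentially through H\"older with $s=r/(r-1)<2$ and the local integrability of $\|w\|^{-2s}$ in $\C^2$; it also dispatches the $x$-monomial case separately and trivially, since then $\mathcal{K}\varphi(x)=f(x)\cdot O(|\log\|x\||)$. Your near/far splitting of the \emph{domain} is more elementary and buys a stronger statement: on the far region only $L^1$-convergence of $g(\cdot,x)$ is used, so the hypothesis $r>2$ could be relaxed to $r\ge1$. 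Two small inaccuracies worth fixing: your near-region bound is uniform only for, say, $\|x\|<\rho/2$ (which is all you need, but should be said), and for $x$-monomials it is of size $\|x\|^2(1+|\log\|x\||)$ rather than $C\rho^2$; and your closing explanation of the role of $r>2$ is not right — $\mathcal{K}\varphi(0)$ is finite for any bounded $g$ because the limiting density is $\lesssim\|w\|^{-2}$, which is integrable in $\C^2$, so in your argument the hypothesis $r>2$ plays no essential role at all.
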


\begin{proof}
    (i) Let us first consider the case $p<\infty$. Choose 
    $$q:=p/(p-1)\ \ \ \mbox{ and } \ \ \ \eta:=2-4/p.$$
    So, $1/p+1/q=1$ and
    $$\gamma:=\eta q = (2p-4)/(p-1)= 2 + \frac{2}{1-p}> -4$$ 
    (because of $p>4/3$). 

    We want to show that the $L^p$-norm of $\|x\|^{4/p} \mathcal{K} \varphi$ is finite, and
    we begin by estimating and decomposing, and using the H\"older inequality (with $1/p+1/q=1$) in the following way:
   
    \begin{align*}
        I := &\int_{\tilde{D}} \|x\|^{4} \left| \int_{\tilde{D}'} \frac{gf \varphi}{\alpha^3} dV(w) \right|^p dV(x) \\ \leq
        &\int_{\tilde{D}} \|x\|^4 \left( \int_{\tilde{D}'} \left(\frac{|gf|\big|\|w\|^{4/p-2}\varphi\big|^p}{\alpha^{3}}\right)^{1/p}
        \left(\frac{|gf| (\|w\|^{2-4/p})^q}{\alpha^{3}}\right)^{1/q} dV(w) \right)^{p} dV(x) \\ \leq
        &\int_{\tilde{D}} \|x\|^4 \int_{\tilde{D}'} \frac{|gf|\big|\|w\|^{4/p-2}\varphi\big|^p}{\alpha^{3}} dV(w) 
        \left(\int_{\tilde{D}'} \frac{|gf|\|w\|^{\eta q}}{\alpha^{3}}\right)^{p/q} dV(w) dV(x).
    \end{align*}
    
    From now on, let us just consider the situation that $\gamma=\eta q <0$.
     The other cases, $\gamma=0$ and $\gamma>0$, respectively,
    are even simpler: just replace $\|x\|^\gamma$ in the following by $1+\big|\log\|x\|\big|$ or $1$, respectively.
    Using \eqref{eqkerninteg1} on the second inner integral, $\gamma p/q = \eta p = 2p -4$ and Fubini's Theorem
    one obtains
    \begin{align*}
        I \lesssim & \int_{\tilde{D}} \int_{\tilde{D}'} \|x\|^4 \frac{|gf|\big| \|w\|^{4/p-2}\varphi\big|^p}{\alpha^{3}} dV(w) \|x\|^{\gamma p/q}dV(x)\\
        = & \int_{\tilde{D}'} \big| \|w\|^{4/p-2}\varphi\big|^p
        \int_{\tilde{D}} \|x\|^{2p} \frac{|gf|}{\alpha^{3}} dV(x) dV(w)
    \end{align*}
    By use of \eqref{eqkerninteg1}, we then get that
    \begin{equation*}
        I \lesssim \int_{\tilde{D}'} \big|\|w\|^{4/p-2}\varphi\big|^p dV(w) = \big\| \|w\|^{4/p-2}\varphi\big\|^p_{L^p({\tilde{D}'})} < \infty.
    \end{equation*}
    
    It remains to consider the case $p=\infty$ which is even simpler:
    \begin{eqnarray*}
    \left| \int_{\tilde{D}'} \frac{gf \varphi}{\alpha^3} dV(w) \right| 
    \leq \big\| \|w\|^{-2} \varphi \big\|_\infty \int_{\tilde{D}'} \|w\|^2 \frac{|gf|}{\alpha^3} dV(w) \lesssim \big\| \|w\|^{-2} \varphi \big\|_\infty
     \end{eqnarray*}
    by use of Lemma \ref{lmakerninteg}.

    \bigskip
    (ii) If $f\in\{x_1^2,x_1x_2,x_2^2\}$, then 
    $$\mathcal{K}\varphi(x) = f(x) \int \frac{g \varphi dV(w)}{\alpha^3}.$$
    But
    $$\left|\int \frac{g \varphi dV(w)}{\alpha^3}\right| \lesssim \int \frac{\|w\|^2}{\alpha^3} \lesssim \log\|x\|$$
    by Lemma \ref{lmakerninteg}. So, it follows that $\mathcal{K}\varphi$ is continuous at $0\in\C^2$
    with $\mathcal{K}\varphi(0,0)=0$.
    
    \medskip
    It remains to treat the case $f\in\{w_1^2,w_1w_2,w_2^2\}$.
    We know from part (i) that $\mathcal{K}\varphi$ is a bounded function (the integral exists for all $x=(x_1,x_2)$).
    Let $c_\varphi := \big| \|w\|^{-2} \varphi\|_\infty$. Using this and $|f| \leq \|w\|^2$, we get
    
    \begin{eqnarray*}
    \Delta(x) := \left| \mathcal{K}\varphi(x) - \mathcal{K}\varphi(0)\right| \leq
    c_\varphi \int \|w\|^4 \left| \frac{g(w,x)}{\alpha^3(w,x)} - \frac{g(w,0)}{\alpha^3(w,0)} \right| dV(w).
    \end{eqnarray*}
    
    Using
    $$\alpha^3 \sim \delta(w,x) := \|w-x\|^3 \|w+x\|^3$$
    from Lemma \ref{lmanorm}, we have
     \begin{eqnarray*}
    \Delta(x)  &\lesssim&
    \int \|w\|^4 \left| \frac{g(w,x)}{\|w-x\|^3\|w+x\|^3} - \frac{g(w,0)}{\|w\|^6} \right| dV(w)\\
    &=& \int \|w\|^4 \left| \frac{\|w\|^6 g(w,x) -\delta(w,x) g(w,0)}{\delta(w,x) \|w\|^6} \right| dV(w)\\
    &\leq&  \int \|w\|^4 \left| \frac{\|w\|^6 g(w,x) -\delta(w,x) g(w,x)}{\delta(w,x) \|w\|^6} \right| dV(w)\\
    && + \int \|w\|^4 \left| \frac{\delta(w,x) g(w,x) -\delta(w,x) g(w,0)}{\delta(w,x) \|w\|^6} \right| dV(w).
        \end{eqnarray*}
        
    By use of the Taylor expansion, we have
    $$\left| \delta(w,x) - \|w\|^6 \right| = \left| \delta(w,x) - \delta(w,0) \right| \lesssim \sum_{k=1}^6 \|x\|^k \|w\|^{6-k}.$$
    This gives
    \begin{eqnarray*}
    \Delta_1(x) &:= & \int \|w\|^4 \left| \frac{\|w\|^6 g(w,x) -\delta(w,x) g(w,x)}{\delta(w,x) \|w\|^6} \right| dV(w)\\
    &\lesssim& \|x\| \sum_{k=1}^6 \|g\|_\infty \int \frac{\|x\|^{k-1} \|w\|^{4-k}}{\delta(w,x)} dV(w)
    \lesssim \|x\| \|g\|_\infty,
    \end{eqnarray*}
    where we have used Lemma \ref{lmakerninteg} for the last step.
    
    On the other hand,
    \begin{eqnarray*}
    \Delta_2(x) &:=&  \int \|w\|^4 \left| \frac{\delta(w,x) g(w,x) -\delta(w,x) g(w,0)}{\delta(w,x) \|w\|^6} \right| dV(w)\\
    &=& \int \left| \frac{g(w,x) - g(w,0)}{\|w\|^2} \right| dV(w).
    \end{eqnarray*}
    Let $s=\frac{r}{r-1}$. Then $s<2$ and the H\"older inequality gives:
    \begin{eqnarray*}
    \Delta_2(x) &\leq& \left( \int \frac{dV(w)}{\|w\|^{2s}}\right)^{1/s} \|g(\cdot,x) - g(\cdot,0)\|_{L^r(\tilde{D}')}\\
    &\lesssim& \|g(\cdot,x) - g(\cdot,0)\|_{L^r(\tilde{D}')} \rightarrow 0
    \end{eqnarray*}
    as $x\rightarrow 0$ by assumption.
    
    Summing up, we have $\Delta(x)=\Delta_1(x)+\Delta_2(x) \rightarrow 0$ as $x\rightarrow 0$.
\end{proof}

Let us remark that the estimates in the proof of Lemma \ref{lmacoveringintegr} are pretty rough.
We could do much better, but the Lemma -- as it stands -- is sufficient for our purpose and better estimates
would complicate the presentation considerably. 
In the special case $p=2$, we will need some better estimates which we give in the next section.

\smallskip
\subsection{Estimates for cut-off procedures}

In the proof of the homotopy formula for the $\dbar_w$ and the $\dbar_s$-operator,
we will use certain cut-off procedures. For these we require some better estimates
which will be given in this section.
For $k\in\Z$, $k\geq 1$, let
\begin{equation} \label{eq:Dk}
    \tilde{D}_k := \{ x\in \tilde{D}': e^{-e^{k+1}/2} < \|x\| < \sqrt{2}e^{-e^k/2} \}.
\end{equation}
A simple calculation shows that since $k\geq 1$,
\begin{equation} \label{eq:Dkprime}
\tilde{D}_k \subset \tilde{D}_k' := \{ x\in \tilde{D}': e^{-e^{k+1}/2} < \|x\| < e^{-e^{k-1}/2} \},
\end{equation}
so in the following proofs of the following lemmas, we can consider integration over $\tilde{D}_k'$ instead.

\begin{lma} \label{lem:cutoff1}
    Let $\mathcal{K}$ be an integral operator defined by
    \begin{equation*}
        \mathcal{K} \varphi(x) = \int_{\tilde{D}'} |K(w,x)| \varphi(w) dV(w),
    \end{equation*}
    where $K$ is of the form
    \begin{equation*}
        K = \frac{gf}{\alpha^3},
    \end{equation*}
    where $g \in L^\infty({\tilde{D}'}\times {\tilde{D}})$ and $f\in\{w_1^2,w_1w_2,w_2^2, x_1^2,x_1x_2,x_2^2\}$.
    
    Then there exists a constant $C>0$ such that
    \begin{eqnarray*}
    \int_{\tilde{D}_k} \frac{|\mathcal{K}\varphi (x)|^2}{\log^2 \|x\|} dV(x) &<& C \|\varphi\|^2_{L^2(\tilde{D}')}
    \end{eqnarray*}
    for all $\varphi\in L^2(\tilde{D}')$ and all $k\geq 1$.
 \end{lma}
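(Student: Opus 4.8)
The plan is to turn the statement into an $L^2$ Schur-type estimate and then into a single mass estimate on each annulus, where the doubly-exponential spacing of the $\tilde{D}_k$ will produce the uniform constant. Since the operator is built from $|K|$, I may assume $\varphi\ge 0$, and since $\tilde{D}_k\subset\tilde{D}_k'$ by \eqref{eq:Dkprime} with nonnegative integrand, it suffices to estimate the integral over $\tilde{D}_k'$. Writing $\mathcal{K}\varphi(x)=\int_{\tilde{D}'}|K(w,x)|\varphi(w)\,dV(w)$ and applying the Cauchy--Schwarz inequality with respect to the measure $|K(w,x)|\,dV(w)$ gives
\[
    |\mathcal{K}\varphi(x)|^2\le\Big(\int_{\tilde{D}'}|K(w,x)|\,dV(w)\Big)\Big(\int_{\tilde{D}'}|K(w,x)|\,|\varphi(w)|^2\,dV(w)\Big).
\]
Bounding $|g|\le\|g\|_\infty$, Lemma~\ref{lmakerninteg} applied with $\gamma=0$ bounds the first factor by $C(1+\big|\log\|x\|\big|)$, and this holds in both cases $f\in\{w_1^2,w_1w_2,w_2^2\}$ and $f\in\{x_1^2,x_1x_2,x_2^2\}$.

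On $\tilde{D}_k'$ one has $e^{k-1}/2\le\big|\log\|x\|\big|\le e^{k+1}/2$ by \eqref{eq:Dkprime}, so $\big|\log\|x\|\big|\sim e^k$ uniformly in $k$, and in particular $1+\big|\log\|x\|\big|\lesssim\big|\log\|x\|\big|$. Substituting the bound for the first factor, cancelling one power of $\big|\log\|x\|\big|$ against the weight $\log^2\|x\|$, and using Fubini's theorem, the quantity to be estimated is dominated by
\[
    \int_{\tilde{D}'}|\varphi(w)|^2\Big(\int_{\tilde{D}_k'}\frac{|K(w,x)|}{\big|\log\|x\|\big|}\,dV(x)\Big)\,dV(w).
\]
Thus it remains to show that the inner integral is bounded by a constant independent of $w$ and $k$; since $\big|\log\|x\|\big|\sim e^k$ on $\tilde{D}_k'$, this reduces to the key estimate $\int_{\tilde{D}_k'}|K(w,x)|\,dV(x)\lesssim e^k$.

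To prove the key estimate I would use $\alpha\sim\|x-w\|\,\|x+w\|$ from Lemma~\ref{lmanorm}, bound $|f|$ by $\|w\|^2$ or by $\|x\|^2$ according to the type of $f$, and split $\tilde{D}_k'$ into the three regions $\{\|x\|\le\|w\|/2\}$, $\{\|w\|/2<\|x\|<2\|w\|\}$ and $\{\|x\|\ge 2\|w\|\}$. On the first $\alpha\gtrsim\|w\|^2$, on the last $\alpha\gtrsim\|x\|^2$, and on the middle one the rescaling $x=\|w\|y$ turns the integral into a fixed convergent integral with integrable singularities at $y=\pm w/\|w\|$. All resulting contributions are $O(1)$ uniformly in $w$ and $k$, with a single exception: in the $x$-monomial case on the outer region the integrand is $\sim\|x\|^{-4}$, and
\[
    \int_{\tilde{D}_k'\cap\{\|x\|\ge 2\|w\|\}}\|x\|^{-4}\,dV(x)\lesssim\log\frac{e^{-e^{k-1}/2}}{e^{-e^{k+1}/2}}=\tfrac12 e^{k-1}(e^2-1)\sim e^k,
\]
which is exactly the claimed growth. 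Summing the contributions yields $\int_{\tilde{D}_k'}|K(w,x)|\,dV(x)\lesssim e^k$.

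The only delicate point, and the main obstacle, is this last estimate together with its uniformity in $k$: the $x$-monomial kernel genuinely has logarithmically divergent mass on the annulus, of size $\sim e^k$ as computed above, and this is cancelled precisely by the factor $e^{-k}\sim 1/\big|\log\|x\|\big|$ left over from the Cauchy--Schwarz step. This is exactly why the weight must be $\log^{-2}\|x\|$ rather than a single power, and why the annuli $\tilde{D}_k$ are defined so that $\|x\|$ is doubly-exponentially small and $\big|\log\|x\|\big|\sim e^k$ is essentially constant across each annulus; a coarser spacing would let $\big|\log\|x\|\big|$ vary too much across $\tilde{D}_k$ and spoil the uniformity of the constant $C$.
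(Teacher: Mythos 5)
Your proposal is correct and follows essentially the same route as the paper: Cauchy--Schwarz with respect to the measure $|K(w,x)|\,dV(w)$, Lemma~\ref{lmakerninteg} with $\gamma=0$ to absorb one factor into $1/\big|\log\|x\|\big|$, Fubini, and then a uniform bound for $\int_{\tilde{D}_k'}|K(w,x)|\,\big|\log\|x\|\big|^{-1}dV(x)$. The only difference is that where the paper cites the appendix Lemma~\ref{lem:estimateCn4} (with $\gamma\in\{4,6\}$) for this last estimate, you re-derive its content by hand, using that $\big|\log\|x\|\big|\sim e^k$ across the annulus and a three-region decomposition --- which is in substance the same computation.
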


\begin{proof}
We proceed similarly as in the proof of Lemma \ref{lmacoveringintegr},
but need to estimate the integrals more carefully:
    \begin{align*}
        I_k := &\int_{\tilde{D}_k} \frac{1}{\log^2\|x\|} \left| \int_{\tilde{D}'} \frac{|gf| \varphi}{\alpha^3} dV(w) \right|^2 dV(x) \\
        \leq &\int_{\tilde{D}_k} \frac{1}{\log^2\|x\|} \left( \int_{\tilde{D}'} \left(\frac{|gf| |\varphi|^2}{\alpha^{3}}\right)^{1/2}
        \left(\frac{|gf|}{\alpha^{3}}\right)^{1/2} dV(w) \right)^{2} dV(x) \\ \leq
        &\int_{\tilde{D}_k} \frac{1}{\log^2\|x\|} \left( \int_{\tilde{D}'} \frac{|gf| |\varphi|^2}{\alpha^{3}} dV(w) \right)
        \left(\int_{\tilde{D}'} \frac{|gf|}{\alpha^{3}} dV(w)\right) dV(x).
    \end{align*}
    Applying Lemma \ref{lmakerninteg} to the second inner integral gives
    \begin{align*}
    I_k \lesssim & \int_{\tilde{D}_k} \frac{1}{\big|\log\|x\|\big|} \int_{\tilde{D}'} \frac{|gf| |\varphi|^2}{\alpha^{3}} dV(w) dV(x).
    \end{align*}
    
Using $\alpha \gtrsim \|w-x\| \|w+x\|$ (see Lemma \ref{lmanorm}), Fubini's Theorem and the fact that 
$$\int_{\tilde{D}_k} \frac{|fg|  dV (x)}
{\|w-x\|^3 \|w+x\|^3 \big|\log\|x\|\big|} \lesssim 1$$
for all $w\in\C^2$ by Lemma \ref{lem:estimateCn4} (with $\gamma\in\{4,6\}$) together with \eqref{eq:Dkprime},
we finally obtain
$$I_k \lesssim \int_{\tilde{D}'} |\varphi|^2 dV(w) = \|\varphi\|^2_{L^2(\tilde{D}')}.$$
\end{proof}

Another cut-off estimate that we will need is:

\begin{lma} \label{lem:cutoff2}
    For $k\in \Z$, $k\geq 1$,
    let $\mathcal{K}^k$ be integral operators defined by
    \begin{equation*}
        \mathcal{K}^k \varphi(x) = \int_{\tilde{D}_k} |K(w,x)| \frac{\varphi(w)}{\|w\|^2 \big|\log\|w\|\big|} dV(w),
    \end{equation*}
    where $K$ is of the form
    \begin{equation*}
        K = \frac{gf}{\alpha^3},
    \end{equation*}
    where $g \in L^\infty({\tilde{D}'}\times {\tilde{D}})$ and $f\in\{w_1^2,w_1w_2,w_2^2,x_1^2,x_1x_2,x_2^2\}$.
    
    Let $\varphi\in L^2(\tilde{D}')$. Then
    \begin{eqnarray*}
    \int_{\tilde{D}} \|x\|^4 |\mathcal{K}^k\varphi (x)|^2 dV(x) \longrightarrow 0
    \end{eqnarray*}
    for $k\rightarrow \infty$.
 \end{lma}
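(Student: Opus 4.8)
The plan is to mimic the proof of Lemma \ref{lem:cutoff1}, the only new feature being that the cut-off now sits on the inner variable $w$ together with the weight $1/(\|w\|^2|\log\|w\||)$, while the output is measured in the weighted norm $\int_{\tilde D}\|x\|^4|\cdot|^2\,dV(x)$. Writing
$$I_k:=\int_{\tilde D}\|x\|^4\Big|\int_{\tilde D_k}\frac{|gf|}{\alpha^3}\frac{\varphi(w)}{\|w\|^2|\log\|w\||}\,dV(w)\Big|^2 dV(x),$$
I would apply the Cauchy--Schwarz inequality to the inner integral, but insert an auxiliary weight $\rho(w)=\|w\|^a$ (any $a$ with $0<a<4$, e.g.\ $a=2$) in order to balance the factor $\|w\|^{-2}$: split the integrand as $(N_k\rho\,|\varphi|^2)^{1/2}(N_k\rho^{-1})^{1/2}$ with $N_k=|gf|/(\alpha^3\|w\|^2|\log\|w\||)$. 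This produces the ``cost'' factor $V_k(x):=\int_{\tilde D_k}N_k(w,x)\rho(w)^{-1}dV(w)$ and, after Fubini, the bound
$$I_k\le\int_{\tilde D_k}\rho(w)|\varphi(w)|^2\Big[\int_{\tilde D}\|x\|^4 V_k(x)\,N_k(w,x)\,dV(x)\Big]dV(w).$$
Throughout I would use that $|\log\|w\||\gtrsim e^k$ on $\tilde D_k$ by \eqref{eq:Dk}, which yields a factor $e^{-k}$ each time $N_k$ is integrated.

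The heart of the estimate is then two applications of Lemma \ref{lmakerninteg}, in both its $w$- and its $x$-integrated form (the latter obtained by interchanging the roles of $w$ and $x$). Bounding $|gf|\le\|g\|_\infty|f|$ and writing $\|w\|^2=|w_1^2|+|w_2^2|$, resp.\ $\|x\|^2=|x_1^2|+|x_2^2|$, so that the monomials required by Lemma \ref{lmakerninteg} appear, one finds for $f\in\{w_1^2,w_1w_2,w_2^2\}$ that $V_k(x)\lesssim e^{-k}\|x\|^{-a-2}$ and that the bracket is $\lesssim e^{-2k}\|w\|^{-a}$, using \eqref{eqkerninteg1} together with $\alpha\gtrsim\|w-x\|\,\|w+x\|$ from Lemma \ref{lmanorm}. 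The two powers of $\|w\|$ cancel against $\rho(w)=\|w\|^a$, the two factors $e^{-k}$ combine, and one is left with $I_k\lesssim e^{-2k}\int_{\tilde D_k}|\varphi|^2\,dV\le e^{-2k}\|\varphi\|^2_{L^2(\tilde D')}$, which tends to $0$. (In this case one may alternatively observe that $\mathcal{K}^k$ factors as $\mathcal{K}^{(0)}\circ(\text{multiplication by }\chi_{\tilde D_k})$ for $k$ large, where $\mathcal{K}^{(0)}$ is the same operator with $w$ integrated over $\{\|w\|<1/2\}$; a Schur test with weights $\|w\|^{-1}$ and $\|x\|^{-3}$ shows $\mathcal{K}^{(0)}$ is bounded from $L^2$ to $L^2(\|x\|^4\,dV)$, and since $\tilde D_k$ shrinks to the origin one has $\|\chi_{\tilde D_k}\varphi\|_{L^2}\to0$.)

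The main obstacle is the case $f\in\{x_1^2,x_1x_2,x_2^2\}$. Here $f$ depends only on $x$, so the weight $\|w\|^{-2}$ is no longer absorbed by $f$, and the cost integral pushes the exponent in Lemma \ref{lmakerninteg} down to the very boundary of its range of validity (the conditions $\gamma>-6$, $\gamma>-4$ in \eqref{eqkerninteg1}); equivalently, enlarging $\tilde D_k$ to $\tilde D'$ in $V_k$ produces a divergent integral at $w=0$. This is precisely the point at which the exponential shell structure \eqref{eq:Dk}--\eqref{eq:Dkprime} must be exploited: rather than enlarging the domain I would keep the restriction to $\tilde D_k$ and estimate the cost integral by the sharper shell estimate from the appendix --- the companion of Lemma \ref{lem:estimateCn4} with the integration variable and the logarithmic weight attached to $w$ rather than to $x$ --- exactly as in Lemma \ref{lem:cutoff1}. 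Granting that estimate, the same bookkeeping as above, with the two factors $e^{-k}$ coming from $|\log\|w\||\gtrsim e^k$ and the cancellation against $\rho$, again yields $I_k\to0$ as $k\to\infty$.
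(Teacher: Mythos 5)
Your proposal with $a=2$ reproduces the paper's argument essentially verbatim: the Cauchy--Schwarz splitting with weight $\rho(w)=\|w\|^2$ is exactly the paper's factorization, the uniform bound $\|x\|^4V_k(x)\lesssim 1$ on the cost factor is exactly Lemma \ref{lem:estimateCn4} (with $\gamma\in\{0,2\}$, applied to $\tilde{D}_k$ via \eqref{eq:Dkprime}), and the remaining $x$-integral is handled by Lemma \ref{lmakerninteg}, so the approach is correct and the same as the paper's. One small bookkeeping correction: in the case $f\in\{x_1^2,x_1x_2,x_2^2\}$ you do not in fact retain two factors of $e^{-k}$ --- the logarithm in the cost integral is consumed by the shell estimate, and the $x$-integral of $|gf|\alpha^{-3}$ produces a factor $1+\big|\log\|w\|\big|$ that cancels the remaining $\big|\log\|w\|\big|^{-1}$ --- so there the convergence $I_k\to 0$ comes solely from $\|\varphi\|^2_{L^2(\tilde{D}_k)}\to 0$ as the shells shrink to the origin, which your setup does provide.
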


\begin{proof}
We proceed similarly as in the proof of Lemma \ref{lem:cutoff1}:
    \begin{align*}
        I_k := &\int_{\tilde{D}} \|x\|^4 \left| \int_{\tilde{D}_k} \frac{|gf| \varphi}{\alpha^3 \|w\|^2 \big|\log\|w\|\big|} dV(w) \right|^2 dV(x) \\ \leq
        &\int_{\tilde{D}} \|x\|^4 \left( \int_{\tilde{D}_k} \left(\frac{|gf| |\varphi|^2}{\alpha^{3}\big|\log\|w\|\big|}\right)^{1/2}
        \left(\frac{|gf|}{\alpha^{3}\|w\|^4\big|\log\|w\|\big|}\right)^{1/2} dV(w) \right)^{2} dV(x) \\ \leq
        &\int_{\tilde{D}}  \left(\int_{\tilde{D}_k} \frac{|gf| |\varphi|^2}{\alpha^{3} \big|\log \|w\|\big|} dV(w) \right)
        \|x\|^4 \left(\int_{\tilde{D}_k} \frac{|gf|}{\alpha^{3}\|w\|^4|\log\|w\|\big|} dV(w)\right) dV(x).
    \end{align*}
    Using $\alpha \gtrsim \|w-x\| \|w+x\|$ (see Lemma \ref{lmanorm}), Fubini's Theorem and the fact that 
$$\|x\|^4 \int_{\tilde{D}_k} \frac{|fg|  dV (w)}
{\|w-x\|^3 \|w+x\|^3 \|w\|^4 \big|\log\|w\|\big|} \lesssim 1$$
for all $x\in\C^2$ by Lemma \ref{lem:estimateCn4} (with $\gamma\in\{0,2\}$) and \eqref{eq:Dkprime}, we obtain
$$I_k \lesssim \int_{\tilde{D}_k} \frac{|\varphi|^2}{\big|\log\|w\|\big|} 
\left( \int_{\tilde{D}} \frac{|fg|}{\alpha^3} dV(x) \right) dV(w).$$
But now we can apply Lemma \ref{lmakerninteg} to the inner integral to conclude finally:
$$I_k \lesssim \int_{\tilde{D}_k} \frac{|\varphi|^2}{\big|\log\|w\|\big|} \big( 1 + \big|\log\|w\|\big|\big) dV(w) 
\lesssim \|\varphi\|^2_{L^2(\tilde{D}_k)} \rightarrow 0$$
for $k\rightarrow \infty$ as the domain of integration vanishes.
\end{proof}

\smallskip
\subsection{$L^p$-norms on the variety and the covering}\label{sec:lp-forms}

Let $1\leq p\leq \infty$.

When we consider a $L^p$-differential form as input into an integral operator,
it will be convenient to represent it in a certain ``minimal'' manner.
If $\varphi$ is a $(0,q)$-form on $X$ (or $X'$, respectively), then by \cite{RuppThesis}*{Lemma~2.2.1}, we can
write $\varphi$ uniquely in the form
\begin{equation}\label{eq:minrep}
    \varphi = \sum_{|I| = q} \varphi_I d\bar{z}_I,
\end{equation}
where $|\varphi|^2 (p)= \sqrt{2}^{q} \sum |\varphi_I|^2(p)$ in each regular point $p\in \Reg X$.
The constants here stem from the fact that $|d\overline{z_j}|=\sqrt{2}$ in $\C^n$.
In particular, we then get that $\varphi \in L^p_{0,q}(X)$ if and only if $\varphi_I \in L^p(X)$ for all $I$.
Note that the singular set of $X$ is negligible as it is a zero set.

\medskip
We say that $\varphi$ is continuous at a point $p\in X$ if there is a representation \eqref{eq:minrep}
such that all the coefficients $\varphi_I$ are continuous at the point $p$. This does not need to be the minimal representation.
Let $C^0_{0,q}(X)$ be the space of continuous $(0,q)$-forms on $X$.
$C^0_{0,q}(X)$ is a Fr\'echet space with the metric induced by the semi-norms $\|\cdot\|_{L^\infty, K_j}$,
where $K_1 \subset K_2 \subset K_3 \subset ...$ is a compact exhaustion of $X$.

We also note that continuous forms on $X$ have a continuous extension to a neighborhood of $X$ in $\C^3$
by the Tietze extension theorem.

\medskip
We let $dV_X$ be the induced volume form $i^*\omega^2/2$ on $X$, where $i : X \to \C^3$ is the inclusion and
$\omega$ is the standard Kähler form on $\C^3$. Then
\begin{equation} \label{eq:volform}
    \pi^* dV_X = 2(\|w_1\|^4 + \|w_2\|^4 + 4\|w_1w_2\|^2) dV(w).
\end{equation}
If we let
\begin{equation*}
    \xi^2 := \|w_1\|^2 + \|w_2\|^2 \text{ and } \tilde{\varphi}_I := \pi^* \varphi_I,
\end{equation*}
then since $2(\|w_1\|^4 + \|w_2\|^4 + 4\|w_1 w_2\|^2) \sim \xi^4$,  we get that
$\varphi \in L^p_{0,q}(X)$ if and only if $\xi^{4/p} \tilde{\varphi}_I \in L^p(\tilde{D})$,
where $\varphi$ is given in the minimal representation \eqref{eq:minrep} from above,
$\tilde{D} = \pi^{-1}(D)$ and with the convention that $1/p=0$ for $p=\infty$.

\medskip
If $\varphi = \sum_{|I|=q} \varphi_I d\bar{z}_I$ is a $(0,q)$-form that is not necessarily written in the minimal form
above, then we can make at least the following useful observation. Note that
$$|\varphi| \lesssim \sum_{|I|=q} |\varphi_I|,$$
and so
$$|\varphi|^p \lesssim \sum_{|I|=q} |\varphi_I|^p.$$
But
\begin{equation*}
    \pi^*(|\varphi_I|^p dV_X) = |\tilde{\varphi}_I|^p 2(\|w_1\|^4 + \|w_2\|^4 + 4\|w_1 w_2\|^2) dV(w).
\end{equation*}
So
\begin{equation*}
    \pi^*(|\varphi|^p dV_X) \leq C \xi^4 \sum |\tilde{\varphi}_I|^p dV(w),
\end{equation*}
and we have proved the first part of the following lemma.

\begin{lma}\label{lem:lpbound}
    Let $\varphi = \sum_{|I|=q} \varphi_I d\bar{z}_I$ be an arbitrary representation of
    $\varphi$ as a $(0,q)$-form on $X$. 
    \begin{itemize}
    \item[i.] If $\tilde{\varphi}_I \in \xi^{-4/p}L^p(\tilde{D})$ for all $I$, then $\varphi \in L^p_{0,q}(D)$.
    \item[ii.] If $\tilde{\varphi}_I$ is continuous at $p\in \tilde{D}$ for all $I$, then $\varphi$ is continuous at $\pi(p)\in D$.
    \end{itemize}
\end{lma}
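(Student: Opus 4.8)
The first part is in fact already finished by the pointwise bound
$\pi^*(|\varphi|^p\,dV_X)\le C\,\xi^4\sum_{|I|=q}|\tilde\varphi_I|^p\,dV(w)$ established just above the statement. The plan is merely to integrate this over $\tilde D$ and use that $\pi$ is a $2$-sheeted branched cover, so that $\int_X|\varphi|^p\,dV_X=\tfrac12\int_{\tilde D}\pi^*(|\varphi|^p\,dV_X)\lesssim\sum_{|I|=q}\int_{\tilde D}\xi^4|\tilde\varphi_I|^p\,dV(w)=\sum_{|I|=q}\big\|\xi^{4/p}\tilde\varphi_I\big\|_{L^p(\tilde D)}^p$, which is finite by the hypothesis $\tilde\varphi_I\in\xi^{-4/p}L^p(\tilde D)$. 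Hence $\varphi\in L^p_{0,q}(D)$.

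For the second part the plan is to show that each coefficient $\varphi_I$ of the given representation is continuous at $\pi(p)$, after which continuity of $\varphi$ at $\pi(p)$ follows immediately from the definition of continuity of forms on $X$ (which permits any, not necessarily minimal, representation). The two structural facts about $\pi$ that I would invoke are: (a) on $\tilde D\setminus\{0\}$ the map $\pi$ is an unbranched local biholomorphism onto $X^*=X\setminus\{0\}$, with deck transformation $w\mapsto -w$; and (b) $\pi$ is proper with $\pi^{-1}(0)=\{0\}$, since $\|\pi(w)\|^2=|w_1|^4+|w_2|^4+|w_1w_2|^2\sim\xi^4=\|w\|^4$, so $\|\pi(w)\|\sim\|w\|^2$ and in particular $\pi(w)\to 0$ exactly when $w\to 0$.

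I would then split into two cases. If $p\neq 0$, then $\pi(p)\in X^*$, and by (a) there is a continuous local section $\sigma$ of $\pi$ near $\pi(p)$ with $\sigma(\pi(p))=p$; since $\tilde\varphi_I=\pi^*\varphi_I$, we have $\varphi_I=\tilde\varphi_I\circ\sigma$ in a neighbourhood of $\pi(p)$, which is continuous there because $\tilde\varphi_I$ is continuous at $p$. If $p=0$, so $\pi(p)=0$, I would instead show that $\varphi_I$ extends continuously to $0$ with value $\tilde\varphi_I(0)$: for any sequence $\zeta^{(n)}\to 0$ in $X^*$, choose preimages $w^{(n)}\in\pi^{-1}(\zeta^{(n)})$, so that by (b) we get $w^{(n)}\to 0$ and therefore $\varphi_I(\zeta^{(n)})=\tilde\varphi_I(w^{(n)})\to\tilde\varphi_I(0)$. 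This limit is independent of the chosen preimage and of the sequence, since $\tilde\varphi_I$ is continuous at $0$ and invariant under $w\mapsto -w$, so $\varphi_I$ is continuous at $0$.

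The only genuinely delicate point is the branch value $p=0$, where $\pi$ admits no local section and continuity cannot be transported through an inverse map but must instead be transferred through properness; the crux there is the equivalence $\|\pi(w)\|\sim\|w\|^2$, which guarantees that preimages of points approaching the singular point themselves approach the origin. Away from the origin the argument is the routine transfer of continuity along a local homeomorphism, and the passage from continuity of all coefficients to continuity of the form is immediate from the definition.
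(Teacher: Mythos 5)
Your proof is correct and follows essentially the same route as the paper: part (i) is exactly the integration of the pointwise bound $\pi^*(|\varphi|^p\,dV_X)\lesssim \xi^4\sum_I|\tilde\varphi_I|^p\,dV(w)$ that is established immediately before the lemma, and part (ii) is the properness argument that the paper compresses into a single sentence (``continuity of $\tilde\varphi_I$ at $p$ implies continuity of $\varphi_I$ at $\pi(p)$ since $\pi$ is proper''). Your case split at the branch point, using $\|\pi(w)\|\sim\|w\|^2$ to force preimages of sequences tending to $0$ to tend to $0$, is just a careful unpacking of that sentence.
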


\begin{proof}
It only remains to prove part ii. But continuity of $\tilde{\varphi}_I$ at $p$ implies directly continuity of $\varphi_I$ at $\pi(p)$
since $\pi$ is proper, and so $\varphi$ is continuous by definition.
\end{proof}

\smallskip
\subsection{Estimating integrals on the variety by estimates for the covering}

Using Lemma \ref{lem:lpbound},
we can now formulate a condition for an integral kernel on the variety to map $L^p$ into $L^p$
in terms of how the kernel behaves in the covering.
At the same time, we get some conditions on the convergence and boundedness, respectively,
of certain cut-off procedures to be studied later.

\begin{lma} \label{lmal2covering}
    Let $\mathcal{K}$ be an integral operator, acting on $(0,q)$-forms in $\zeta$ on $X'$,
    and returning $(0,q-1)$-forms in $z$ on $X$, and write the integral kernel $K$ in the form
    \begin{equation}\label{eq:integralkernels}
        K = \sum K_i \wedge d\bar{z}_i \text{ or } K = \sum K_i \wedge d\bar{\zeta}_i,
    \end{equation}
    depending on whether $q=2$ or $q=1$. Let $\tilde{K}_i = \pi^*K_i \wedge d\bar{w_1} \wedge d\bar{w_2}$.
    
    \medskip
    (i) Let $\frac{4}{3} < p \leq \infty$.
    If $\tilde{K}_i$ maps $\xi^{2-4/p}L^p(\tilde{D}')$ continuously to
    $\xi^{-4/p} L^p(\tilde{D})$ for $i=1,2,3$, then $\mathcal{K}$ maps $L^p_{0,q}(X')$ continuously to $L^p_{0,q-1}(X)$.
    
    \medskip
    (ii) If $\tilde{K}_i$ maps $\xi^{2} L^\infty(\tilde{D}')$ to functions continuous at $0\in\C^2$
    for $i=1,2,3$, then $\mathcal{K}$ maps $L^\infty_{0,q}(X')$ to functions continuous at $0\in X$.

    \medskip
    (iii) For $k\in\Z$, $k\geq 1$, let ${X}_k$ be a series of subdomains in $X'$ and $\tilde{D}_k=\pi^{-1}(X_k)$
    the corresponding subdomains of $\tilde{D}'$. Let $\mathcal{K}^k$ be the integral operators defined by 
    integrating against the kernel $K$ over $X_k$, and $\tilde{\mathcal{K}}_i^k$ the integral operators defined 
    by integrating against $\tilde{K}_i$ over $\tilde{D}_k$.
    
    If
    $$\int_{\tilde{D}} \|x\|^4 \big|\tilde{\mathcal{K}}^k_i \varphi (x)\big|^2 dV(x) \longrightarrow 0\ \ \ \mbox{ as }\ \ \ k\rightarrow \infty,$$
    i.e., $\tilde{\mathcal{K}}^k_i\varphi \rightarrow 0$ in $\xi^{-2}L^2(\tilde{D})$,
    for any $\varphi\in L^2(\tilde{D})$ and $i=1, 2, 3$, then
    $\mathcal{K}^k \varphi \rightarrow 0$ in $L^2_{0,q-1}(X)$ for any $\varphi\in L^2_{0,q}(X')$.
    
    \medskip
    (iv) If there exists a constant $C>0$ such that
    $$\int_{\tilde{D}_k} \frac{\big|\tilde{\mathcal{K}}_i \varphi(x)\big|^2}{\log^2\|x\|} dV(x) < C \|\varphi\|^2_{L^2(\tilde{D}')}$$
    for all $\varphi\in L^2(\tilde{D}')$, $k\geq 1$ and $i=1, 2, 3$, then there exists a constant $C'>0$ such
    that
    $$\int_{X_k} \frac{\big| \mathcal{K} \varphi(z)\big|^2}{\|z\|^2 \log^2 \|z\|} dV(z) < C' \|\varphi\|^2_{L^2(X')}$$
    for all $\varphi\in L^2(\tilde{X}')$, $k\geq 1$.
\end{lma}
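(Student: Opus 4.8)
The plan is to transport the whole setup to the $2$-sheeted covering via $\pi$ and to reduce all four statements to the corresponding statements for the scalar operators $\tilde{\mathcal{K}}_i$ (integration against $\tilde{K}_i$ with respect to $dV(w)$), which are precisely the objects whose mapping properties are assumed. The starting point is that, since $\pi$ is generically $2$-to-$1$ and branched only over the origin, a set of measure zero, the integral of a top-degree form over $X'$ equals $\tfrac12$ times the integral of its pullback over $\tilde{D}'$. Applying this to the integrand $K(\zeta,z)\wedge\varphi(\zeta)$ and keeping $z=\pi(x)$ fixed, one obtains
\begin{equation*}
    \pi^*(\mathcal{K}\varphi)(x) = \tfrac12\int_{\tilde{D}'} \pi^*\big(K(\zeta,z)\big)\wedge\pi^*\varphi(\zeta).
\end{equation*}
Writing $\varphi$ in the minimal representation \eqref{eq:minrep}, expanding $\pi^*(d\bar{\zeta}_i)$ in the anti-holomorphic frame $d\bar{w}_1,d\bar{w}_2$ (the entries of the anti-holomorphic Jacobian of $\pi$), and carrying out the wedge products with the kernel pieces $K_i$, each coefficient of $\pi^*(\mathcal{K}\varphi)$ in the frame $d\bar{z}_J$ becomes a finite sum of terms of the form $\tilde{\mathcal{K}}_i(\xi^2\tilde{\varphi}_I)$ times bounded factors. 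The extra weight $\xi^2$ is dictated by the volume-form identity \eqref{eq:volform} (equivalently, by the fact that $\pi^*dV_X\sim\xi^4\,dV(w)$ while $\tilde{K}_i$ already absorbs the factor $d\bar{w}_1\wedge d\bar{w}_2$).

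For part (i), recall from Section~\ref{sec:lp-forms} that $\varphi\in L^p_{0,q}(X')$ is equivalent to $\tilde{\varphi}_I\in\xi^{-4/p}L^p(\tilde{D}')$ for every $I$, so that $\xi^2\tilde{\varphi}_I\in\xi^{2-4/p}L^p(\tilde{D}')$, which is exactly the input space on which $\tilde{K}_i$ is assumed to act continuously into $\xi^{-4/p}L^p(\tilde{D})$. Feeding these into the scalar reduction shows that each coefficient of $\pi^*(\mathcal{K}\varphi)$ lies in $\xi^{-4/p}L^p(\tilde{D})$, whence Lemma~\ref{lem:lpbound}(i) yields $\mathcal{K}\varphi\in L^p_{0,q-1}(X)$ with the operator norm controlled by the assumed norms of the $\tilde{K}_i$. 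Part (ii) is identical in structure: for $p=\infty$ one has $\tilde{\varphi}_I\in L^\infty(\tilde{D}')$, so $\xi^2\tilde{\varphi}_I\in\xi^2 L^\infty(\tilde{D}')$, the assumed input space on which $\tilde{K}_i$ returns functions continuous at $0$, and the continuity then passes to $\mathcal{K}\varphi$ at $\pi(0)\in X$ by Lemma~\ref{lem:lpbound}(ii).

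Parts (iii) and (iv) use the same scalar reduction together with the elementary comparisons $\|z\|=\|\pi(x)\|\sim\xi^2=\|x\|^2$ and $\log\|z\|\sim\log\|x\|$ on the covering, again combined with $\pi^*dV_X\sim\xi^4\,dV(w)$. For (iii), taking $p=2$ gives $\xi^2\tilde{\varphi}_I\in L^2(\tilde{D}')$ whenever $\varphi\in L^2_{0,q}(X')$, so the hypothesis $\tilde{\mathcal{K}}_i^k\psi\to0$ in $\xi^{-2}L^2(\tilde{D})$ applied to $\psi=\xi^2\tilde{\varphi}_I$ shows that every coefficient of $\pi^*(\mathcal{K}^k\varphi)$ tends to $0$ in $\xi^{-2}L^2(\tilde{D})$, which by the $p=2$ case of Lemma~\ref{lem:lpbound}(i) is exactly convergence of $\mathcal{K}^k\varphi$ to $0$ in $L^2_{0,q-1}(X)$. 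For (iv), the weight comparisons turn the target integral over $X_k$ into $\int_{\tilde{D}_k}|\pi^*(\mathcal{K}\varphi)|^2/\log^2\|x\|\,dV(x)$ up to a constant, and the assumed bound on $\tilde{\mathcal{K}}_i$ applied to the inputs $\xi^2\tilde{\varphi}_I\in L^2(\tilde{D}')$ delivers the estimate with $C'\|\varphi\|^2_{L^2(X')}$ on the right, since $\|\xi^2\tilde{\varphi}_I\|_{L^2(\tilde{D}')}\sim\|\varphi\|_{L^2(X')}$.

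The main obstacle is the bookkeeping in the scalar reduction of the first paragraph: one must verify that after pulling back, expanding the frame forms $\pi^*(d\bar{\zeta}_i)$, and performing the wedge products, the matrix of operators relating the coefficients $\tilde{\varphi}_I$ of the input to those of the output is, up to bounded factors, exactly given by the $\tilde{\mathcal{K}}_i$ applied to $\xi^2\tilde{\varphi}_I$, and in particular that the power of $\xi$ is precisely $2$ and no more. This is where the exact form of the volume factor \eqref{eq:volform} and the anti-holomorphic Jacobian of $\pi$ must be tracked carefully; once this correspondence is established, all four parts follow as direct applications of Lemma~\ref{lem:lpbound} and the norm comparisons on the covering.
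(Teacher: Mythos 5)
Your proposal is correct and follows essentially the same route as the paper: write $\varphi$ in the minimal representation, pull everything back to the covering, observe that $\pi^*(d\bar\zeta_i\wedge d\bar\zeta_j)$ contributes a factor bounded by $\xi^2$ so that the scalar operators $\tilde K_i$ are fed inputs in $\xi^{2-4/p}L^p(\tilde D')$, and conclude with Lemma~\ref{lem:lpbound} together with $\pi^*\|z\|^2\sim\xi^4$ for parts (iii)--(iv). The only nitpick is that the extra $\xi^2$ weight comes from the anti-holomorphic Jacobian (the pullback of the $d\bar\zeta$'s) rather than from the volume-form identity \eqref{eq:volform}, which instead accounts for the $\xi^{-4/p}$ normalization; your parenthetical conflates the two, but the bookkeeping you defer does close exactly as you claim.
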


\begin{proof}
Let us first prove parts (i) and (ii).
    We consider a $\varphi \in L^p_{0,q}(X')$, and write it as in \eqref{eq:minrep} above in the form
    $\varphi = \sum_{|I|=q} \varphi_I d\bar{\zeta}_I$, where $\varphi_I \in L^p(X')$.
    Thus, $\tilde{\varphi_I} = \pi^*\varphi_I \in \xi^{-4/p} L^p(\tilde{D}')$.

    We first consider the case $q=1$. Then $\pi^*(K\wedge\varphi)$ consists of terms
    \begin{equation} \label{eqpullback1}
    \pi^* K_i \wedge \pi^*(d\bar{\zeta}_i \wedge \varphi_j \wedge d\bar{\zeta}_j).
    \end{equation}
    Now, $\pi^*(d\bar{\zeta}_i \wedge d\bar{\zeta}_j) = C f d\bar{w_1}\wedge d\bar{w_2}$,
    where $C$ is a constant and $f$ is one the functions $\bar{w}_1^2$,$\bar{w}_1\bar{w}_2$ or $\bar{w}_2^2$,
    so $|f| \lesssim \xi^2$. We thus get
    that the second term in \eqref{eqpullback1} is $d\bar{w_1}\wedge d\bar{w_2}$ times a
    function in $\xi^{2-4/p}L^p(\tilde{D}')$.

    Thus, $\mathcal{K}$ acting on $\varphi$ expressed as an integral on $\tilde{D}'$
    will be of the form $\int_{\tilde{D}'} \tilde{K}_i \wedge d\bar{w_1}\wedge d\bar{w_2} \wedge \psi$,
    where $\psi \in \xi^{2-4/p}L^p(\tilde{D}')$. Thus, by assumptions on $\mathcal{K}$, 
    $\pi^* \mathcal{K}\varphi \in \xi^{-4/p} L^p(\tilde{D})$
    in case (i) and     
    $\pi^* \mathcal{K}\varphi$ is continuous at $0\in\C^2$ in case (ii).
    So, by Lemma \ref{lem:lpbound}, $\mathcal{K}\varphi \in L^p(X)$ (and this mapping is bounded) in case (i) and $\mathcal{K}\varphi$ 
    is continuous at $0\in X$ in case (ii).

\medskip
    In the same way, when $\varphi$ is a $(0,2)$-form, then $\pi^*\varphi$ will be a function in $\xi^{2-4/p}L^p(\tilde{D}')$
    times $d\bar{w_1}\wedge d\bar{w_2}$, so we can write $\mathcal{K} \varphi$ on the form $\mathcal{K}\varphi = \sum g_i d\bar{z}_i$,
    where $\pi^* g_i$ is of the form
    \begin{equation*}
        \int_{\tilde{D}'} \tilde{K}_i \wedge d\bar{w_1}\wedge d\bar{w_2} \wedge \psi
    \end{equation*}
    and just as above, we get that $\pi^* g_i \in \xi^{-4/p} L^p(\tilde{D})$ in case (i)
    and $\pi^* g_i$ is continuous at $0\in \C^2$ in case (ii).
    So, $g_i \in L^p(X)$, and thus,
    $\mathcal{K} \varphi \in L^p_{0,1}(X)$ in case (i).
    Analogously, $g_i$, and hence also $\mathcal{K}\varphi$, are continuous at $0\in X$ in case (ii).
        
    \medskip
    The proof of part (iii) and (iv) follows by exactly the same arguments (with $p=2$).
    For part (iv) recall that $\pi^* \|z\|^2 \sim \|x\|^4=\xi^4$.
\end{proof}

\bigskip

\section{Properties of the Andersson--Samuelsson integral operator at the $A_1$-singularity}\label{sec:main1}

\smallskip
\subsection{The Koppelman integral operator for a reduced complete intersection}

For convenience of the reader, let us recall shortly the definition of the Koppelman integral operators from \cite{AS2}
in the situation of a reduced complete intersection defined on two different open sets $D \subset\subset D' \subset\subset \C^N$,
$$X = \{ \zeta \in D \subset \C^N \mid g_1(\zeta) = \dots = g_p(\zeta) = 0 \}$$
and $$X' = \{ \zeta \in D' \subset \C^N \mid g_1(\zeta) = \dots = g_p(\zeta) = 0 \},$$
both of dimension $n=N-p$ (see \cite{AS2}, Section 8). 
Let $\omega_{X'}$ be a structure form on $X'$ (see \cite{AS2}, Section 3).
For generic coordinates $(\zeta',\zeta'')=(\zeta'_1, ..., \zeta'_p, \zeta''_1, ..., \zeta''_n)$
such that $\det\big( \partial g/\partial \zeta'\big)$ is generically non-vanishing on $X_{reg}'$,
the structure form $\omega_{X'}$ is essentially the pull-back of
\begin{eqnarray*}
\frac{d\zeta''_1 \wedge ... \wedge d\zeta''_n}{\det \big( \partial g / \partial \zeta')}
\end{eqnarray*}
to $X'$ (there are also some scalar constants and a fixed frame of a trivial line bundle).
The Koppelman integral operator $\mathcal{K}$, which is a homotopy operator for the $\dbar$-equation'
on $X$ is of the form,
\begin{equation}\label{eq:AS1}
    (\mathcal{K} \alpha)(z) = \int_{X'} K(\zeta,z) \wedge \alpha(\zeta),
\end{equation}
which takes forms on $X'$ as its input, and outputs forms on $X$.
Here,
\begin{equation*}
    K(\zeta,z) = \omega_{X'}(\zeta) \wedge \tilde{K}(\zeta,z),
\end{equation*}
and $\tilde{K}$ is defined by
\begin{equation*}
    \tilde{K}(\zeta,z) \wedge d\eta = h_1\wedge\dots\wedge h_p \wedge (g\wedge B)_n.
\end{equation*}
The \emph{Hefer forms} $h_i$ are $(1,0)$-forms satisfying $\delta_\eta h_i = g_i(\zeta) - g_i(z)$
where $\delta_\eta$ is the interior multiplication with
$$2\pi i \sum \eta_j \frac{\partial}{\partial \eta_j} = 2\pi i \sum (\zeta_j - z_j) \frac{\partial}{\partial \eta_j}.$$
The form $g$ is a so-called weight with compact support, and in case $D$ is the unit ball $D = B_1(0) \subseteq \C^n$,
then one choice of such a weight is
\begin{equation*}
    g = \chi - \dbar \chi \wedge \big(\sigma + \sigma(\dbar\sigma) + \dots + \sigma(\dbar\sigma)^{n-1}\big),
\end{equation*}
where
\begin{equation*}
    \sigma = \frac{\zeta \bullet d\eta}{2\pi i(\|\zeta\|^2-\bar{\zeta}\bullet z)}
\end{equation*}
and $\chi = \chi(\zeta)$ is a cut-off function which is identically $1$ in a neighborhood of $\bar{D}$, and
has support in $D'$.
The Bochner-Martinelli form $B$ is defined by
\begin{equation*}
    B = s + s\dbar s + \dots + s(\dbar s)^{n-1},
\end{equation*}
where
\begin{equation*}
    s = \frac{\partial \|\eta\|^2}{\|\eta\|^2} = \frac{\bar{\eta}\bullet d\eta}{\|\eta\|^2}.
\end{equation*}

Considering now the specific case when $X$ is the $A_1$-singularity, $X = \{ \zeta \in D \mid g(\zeta) = 0 \}$,
where $g(\zeta) = \zeta_1 \zeta_2 - \zeta_3^2$, then we choose as a Hefer form
\begin{equation*}
    h = \sum h^i d\eta_i = \frac{1}{2}\left( (\zeta_2 + z_2) d\eta_1 + (\zeta_1 + z_1) d\eta_2\right)
    - (\zeta_3 + z_3)d\eta_3,
\end{equation*}
and one representation of the structure form $\omega_{X'}$ is
\begin{equation}\label{eq:omegaX1}
    \omega_{X'} = \frac{d\zeta_1\wedge d\zeta_2}{-2\zeta_3}.
\end{equation}

\smallskip
\subsection{Proof of Theorem \ref{thm:main1}}\label{sec:main1b}

Note that
\begin{equation}\label{eq:omegaX2}
    \pi^* \omega_{X'} = (-1/2) dw_1\wedge dw_2
\end{equation}
under the $2$-sheeted covering $\pi: \C^2 \rightarrow X'$.
We then get that
\begin{equation*}
    \tilde{K} = \sum_{\sigma\in S_3}
    \frac{\chi}{\|\eta\|^4} h^{\sigma(1)} \bar{\eta}_{\sigma(2)} d\bar{\eta}_{\sigma(3)} -
    \frac{\dbar\chi}{2\pi i \|\eta\|^2 (\|\zeta\|^2-\bar{\zeta}\bullet z)} h^{\sigma(1)} \bar{\eta}_{\sigma(2)} \bar{\zeta}_{\sigma(3)},
\end{equation*}
where $S_l$ is the symmetric group on $l$ elements.
We decompose $\tilde{K}$ into $\tilde{K}_1$ and $\tilde{K}_2$, where $\tilde{K}_1$ and $\tilde{K}_2$
consist of the terms of $\tilde{K}$ containing $\chi$ and $\dbar\chi$, respectively.
The terms of $\tilde{K}_1$ and $\tilde{K}_2$ are then of the forms
\begin{equation*}
    \frac{g_1 f_1}{\|\eta\|^3} \wedge d\bar{\eta}_i \text{ and } \frac{g_2 f_2}{\|\eta\|(\|\zeta\|^2-\bar{\zeta}\bullet z)} \wedge \dbar\chi,
\end{equation*}
where $f_i$ is one of $\zeta_1,\zeta_2,\zeta_3,z_1,z_2,z_3$ and $g_i \in L^\infty(X'\times X)$
is a product of a smooth function with a term of the form $\eta_j/\|\eta\|$.
By Proposition \ref{prop:lpconvergence} below, it follows for $\pi^* g_i(w,x)=g_i(\pi(w),\pi(x))$ that
\begin{eqnarray}\label{eq:conv99}
\lim_{x\rightarrow 0} \pi^* g_i(\cdot,x) = \pi^*g_i(\cdot,0) \ \ \ \mbox{ in } L^r(\tilde{D}')
\end{eqnarray}
for all $1\leq r<\infty$.

The full kernel $K = \tilde{K} \wedge \omega_{X'}$ also splits into kernels
$K_i = \tilde{K_i} \wedge \omega_{X'}$.
We thus also get a decomposition $\mathcal{K} = \mathcal{K}_1 + \mathcal{K}_2$,
and we will prove separately that $\mathcal{K}_1$ and $\mathcal{K}_2$ 
have the claimed mapping properties.
%maps $L^p_{0,q}(X')$ continuously into $L^p_{0,q-1}(X)$.

If $\mathcal{K}_1$ is acting on $(0,1)$-forms or $(0,2)$-forms respectively, then
we will get a contribution from $K_1$ from terms of the form
\begin{equation*}
    \frac{g_1 f_1}{\| \eta \|^3}\wedge \omega_{X'} \wedge d\bar{\zeta}_i \text { or }
    \frac{g_1 f_1}{\| \eta \|^3}\wedge \omega_{X'} \wedge d\bar{z}_i
\end{equation*}
respectively.

Thus, by Lemma~\ref{lmal2covering}, (i), $\mathcal{K}_1$ maps $L^p_{0,q}(X')$ continuously to $L^p_{0,q-1}(X)$
if
\begin{equation} \label{eqk1coveringexp}
    \pi^*\left( \frac{g_1 f_1}{\| \eta \|^3} \omega_X \right) \wedge d\bar{w_1}\wedge d\bar{w_2} =
    c \frac{\tilde{g}_1 \tilde{f}_1}{\alpha^3} \wedge dV(w)
\end{equation}
maps $\xi^{2-4/p}L^p(\tilde{D}')$ continuously to $\xi^{-4/p}L^p(\tilde{D})$. 
But by Lemma~\ref{lmacoveringintegr},
a kernel of the form of \eqref{eqk1coveringexp} does indeed map $\xi^{2-4/p}L^p(\tilde{D}')$ continuously to $\xi^{-4/p} L^p(\tilde{D})$.

By the same Lemmata (and using \ref{eq:conv99}), 
$\mathcal{K}_1$ maps $L^\infty_{0,q}(X')$ to forms continuous at the origin $0\in X$.
On the other hand, on the regular part of $X$,
the kernel behaves like $\|\zeta- z\|^3$ in $\C^2$, i.e., like the Bochner-Martinelli-Koppelman kernel
(cf. also the proof of \cite{AS2}, Lemma 6.1).
So, $\mathcal{K}_1$ maps $L^\infty_{0,q}(X')$ to forms that are (H\"older-)continuous on $X\setminus\{0\}$
by standard arguments (see \cite{Range}, Theorem IV.1.14).
Summing up, we see that $\mathcal{K}_1$ maps $L^\infty_{0,q}(X')$ to $C^0_{0,q-1}(X)$.
This operator is continuous because the Fr\'echet space structure of $C^0_{0,q-1}(X)$
is defined by semi-norms $\|\cdot\|_{L^\infty,K_j}$ where $\{K_j\}_j$ is a compact exhaustion of $X$
(and $\mathcal{K}_1$ maps continuously from $L^\infty$ to $L^\infty$).

\medskip
Considering now $\mathcal{K}_2$, we note that since $\chi$ depends only on $\zeta$, the action
of $\mathcal{K}_2$ on $(0,2)$-forms is $0$, so we only need to consider the case of $(0,1)$-forms.
Note that we can write the pullback of the kernel acting on $\varphi_i d\bar{z_i}$ as an integral
on $X$ of the form
\begin{equation} \label{eqk2coveringexp}
    \pi^*\left( \frac{g \chi'(\zeta)}{\|\eta\|(\|\zeta\|^2-\bar{\zeta}\bullet z)}\right) dV(w)
\end{equation}
where $g \in L^{\infty}(\tilde{D}'\times\tilde{D})$ satisfies \eqref{eq:conv99}. 
Note that $\chi \equiv 1$ in a neighborhood of $\bar{X}$, so $\supp \chi' \cap \bar{X} = \emptyset \}$,
so the integrand in \eqref{eqk2coveringexp} is uniformly bounded
when $z \in X$ and $\zeta \in X'$, so the pullback of the kernel of $\mathcal{K}_2$ will define
bounded operator mapping $\xi^{2-4/p}L^p(\tilde{D}')$ to $\xi^{-4/p} L^p(\tilde{D})$.
By the same arguments as above, one gets also that $\mathcal{K}_2$ maps continuously from $L^\infty_{0,1}(X')$
to $C^0(X)$.

\bigskip
To complete the proof of Theorem \ref{thm:main1},
it only remains to prove the following:

\begin{prop}\label{prop:lpconvergence}
Let $a(\zeta,z) = \frac{\zeta_i-z_i}{\|\zeta-z\|}$. Then
\begin{eqnarray}\label{eq:est99}
\lim_{z\rightarrow 0} a(\cdot,z) = a(\cdot,0) \ \ \ \mbox{ in } L^r(X')
\end{eqnarray}
for all $1\leq r < \infty$. Let $\pi^* a(w,x) := a(\pi(w),\pi(x))$. Then
\begin{eqnarray}\label{eq:est99b}
\lim_{x\rightarrow 0} \pi^* a(\cdot,x) = \pi^*a(\cdot,0) \ \ \ \mbox{ in } L^s(\tilde{D}')
\end{eqnarray}
for all $1\leq s < \infty$.
\end{prop}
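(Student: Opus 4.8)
The plan is to prove \eqref{eq:est99} first and then deduce \eqref{eq:est99b} by pulling back along $\pi$ and comparing volume forms, which is where the covering estimates from Section~\ref{sec:covering} come into play. For \eqref{eq:est99}, the natural route is to combine pointwise convergence with domination, i.e. to invoke the dominated convergence theorem (or, more robustly, a generalized dominated convergence / Vitali-type argument). First I would observe that $a(\zeta,z) = (\zeta_i - z_i)/\|\zeta - z\|$ is a bounded function, $|a| \le 1$, for every fixed $z$, and that for each fixed $\zeta \ne 0$ we have $a(\zeta,z) \to a(\zeta,0)$ pointwise as $z \to 0$. Boundedness by the constant $1$ immediately supplies an $L^r$-integrable dominating function on the bounded domain $X'$, so dominated convergence gives $a(\cdot,z) \to a(\cdot,0)$ in $L^r(X')$ for every $1 \le r < \infty$, once we confirm the pointwise limit holds almost everywhere. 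The only subtlety is the set where the limit fails, namely $\zeta = 0$, but this is a single point and hence has measure zero in $X'$, so it does not affect the $L^r$-convergence.

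The heart of the matter is transferring this to the covering, statement \eqref{eq:est99b}. Here I would write out $\|\pi^* a(\cdot,x) - \pi^* a(\cdot,0)\|_{L^s(\tilde{D}')}^s = \int_{\tilde{D}'} |a(\pi(w),\pi(x)) - a(\pi(w),0)|^s \, dV(w)$ and relate this integral on $\tilde{D}'$ to the corresponding integral on $X'$ via the change-of-variables formula for the covering map $\pi$. The essential fact is that $\pi$ is a proper, generically $2$-to-$1$ map branched only over the origin, and that the pullback of the volume form satisfies $\pi^* dV_{X'} \sim \xi^4 \, dV(w)$ by \eqref{eq:volform}. Since $dV(w) = \xi^{-4} \cdot \xi^4 \, dV(w)$ and $\xi^4 \, dV(w) \sim \pi^* dV_{X'}$, I would bound $\int_{\tilde{D}'} |\cdots|^s \, dV(w) \lesssim \int_{\tilde{D}'} |\cdots|^s \, \xi^{-4} \pi^*(dV_{X'})$, and then push forward to $X'$. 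The presence of the singular weight $\xi^{-4}$ near the origin is the point to watch.

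The main obstacle, therefore, will be controlling the weight $\xi^{-4}$ in the pushforward to ensure the integral on the covering is still governed by an $L^r(X')$-convergence statement with a possibly different (larger) exponent $r$. Because the integrand $|\pi^* a(\cdot,x) - \pi^* a(\cdot,0)|$ is uniformly bounded by $2$, and because the weight $\xi^{-4}$ is locally integrable on $\tilde{D}'$ in $\C^2$ (since $\int_{\tilde{D}'} \xi^{-4} \, dV(w) = \int \|w\|^{-4}\,dV(w)$ converges in real dimension $4$ only marginally and in fact requires care), I would instead argue directly on the covering: apply the dominated convergence theorem again, now in the $w$-variables. The integrand on $\tilde{D}'$ is bounded by $2^s$ and converges pointwise to $0$ for all $w$ with $\pi(w) \ne 0$, i.e. for all $w \ne 0$, which is again a null set in $\tilde{D}'$. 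Thus dominated convergence applies directly over $\tilde{D}'$ with the constant dominating function $2^s$, and no singular weight survives in the final estimate — the weight only enters in identifying the pointwise limit through $\pi$, not in bounding the integrand. This circumvents the apparent difficulty entirely and yields \eqref{eq:est99b} for all $1 \le s < \infty$.
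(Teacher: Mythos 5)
Your argument is correct, but it is genuinely different from --- and more elementary than --- the one in the paper. For \eqref{eq:est99} the paper does not apply dominated convergence directly to the family $a(\cdot,z)$; instead it regularizes $a$ by a cut-off $\chi_\epsilon(\zeta-z)$ supported away from the diagonal, shows that $\|a_\epsilon(\cdot,z)-a(\cdot,z)\|_{L^r(X')}$ is small \emph{uniformly in $z$} using the volume bound $\mathrm{vol}(X'\cap B_\epsilon(z))\lesssim\epsilon^4$ (a Lelong-number estimate from \cite{Dem}), and concludes by a three-term triangle inequality together with the Lipschitz continuity of the smooth approximant $a_\epsilon$. For \eqref{eq:est99b} the paper then deduces the statement from \eqref{eq:est99} via the identification of $L^r(X')$ with the weighted space $\|w\|^{-4/r}L^r(\tilde D')$ from Section~\ref{sec:lp-forms} and H\"older's inequality, choosing $r$ large depending on $s$. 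Your route --- pointwise convergence $a(\zeta,z)\to a(\zeta,0)$ for every fixed $\zeta\neq 0$ (resp.\ every $w\neq 0$ on the covering), the uniform bound $|a|\leq 1$, and dominated convergence along arbitrary sequences $z_n\to 0$ on the finite-measure spaces $X'$ and $\tilde D'$ --- establishes exactly the asserted limits with much less machinery; the only points to make explicit are that the exceptional set is the single point $0$ and that $\tilde D'$ is a bounded subset of $\C^2$. What the paper's argument buys beyond this is the uniformity in $z$ of the regularization (not needed for the statement as formulated) and a further instance of the weighted-norm dictionary between $X'$ and the covering that is used throughout the paper. One inessential slip in your write-up: the parenthetical claim that $\xi^{-4}=\|w\|^{-4}$ is locally integrable on $\tilde D'\subset\C^2$ is false (the integral diverges logarithmically at the origin), but since you abandon the change-of-variables route in favor of the direct dominated-convergence argument on $\tilde D'$, this does not affect your proof.
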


\begin{proof}
Fix $1\leq r<\infty$ and note that $a$ is bounded (by $\|a\|_\infty=1$).

Let $0\leq \chi\leq 1$ be a smooth function such that $\chi\equiv 0$ on $B_{1/2}(0)$
and $\chi\equiv 1$ on $\C^3\setminus B_{1}(0)$, and set $\chi_\epsilon(x) := \chi(x/\epsilon)$ (let $0<\epsilon<1$ throughout this proof).
Let
$$a_\epsilon(\zeta,z) := \chi_\epsilon(\zeta-z) a(\zeta,z).$$
Then $a_\epsilon(\zeta,z)$ is smooth and it is not hard to see by Lebesgue's theorem on dominated
convergence that
\begin{eqnarray*}
\lim_{\epsilon\rightarrow 0} a_\epsilon(\cdot,z) = a(\cdot,z) \ \ \ \mbox{ in } L^r(X')
\end{eqnarray*}
for all $z\in \C^3$. We can say more, namely this convergence is uniformly in $z$:
\begin{eqnarray*}
\|a_\epsilon(\cdot,z) - a(\cdot,z) \|_{L^r(X')} &=& \|a_\epsilon(\cdot,z) - a(\cdot,z)\|_{L^r(X'\cap B_\epsilon(z))}\\
&\leq& \|a\|_\infty \|\chi_\epsilon(\cdot-z)-1\|_{L^r(X'\cap B_\epsilon(z))}\\
&\leq& \|a\|_\infty \int_{X'\cap B_\epsilon(z)} dV_{X'} \lesssim \epsilon^4,
\end{eqnarray*}
because $X'$ is a complex variety of dimension $2$. This follows from \cite{Dem}, Consequence III.5.8,
because $X'$ is bounded and has Lelong number $\leq 2$.

\medskip
We can now prove \eqref{eq:est99}.
Let $\delta>0$. 
By the considerations above, we can choose $\epsilon>0$ such that
\begin{eqnarray*}\label{eq:est100}
\|a_\epsilon(\cdot,z) - a(\cdot,z) \|_{L^r(X')} &\leq& \delta/3
\end{eqnarray*}
for all $z\in \C^3$. Fix such an $\epsilon>0$.
It follows that
\begin{eqnarray*}
\|a(\cdot,z) - a(\cdot,0)\|_{L^r(X')} &\leq& 
\|a(\cdot,z) - a_\epsilon(\cdot,z)\|_{L^r(X')}
+ \|a_\epsilon(\cdot,z) - a_\epsilon(\cdot,0)\|_{L^r(X')}\\
&& + \|a_\epsilon(\cdot,0) - a(\cdot,0)\|_{L^r(X')}\\
&\leq& 2\delta/3 + \|a_\epsilon(\cdot,z) - a_\epsilon(\cdot,0)\|_{L^r(X')}
\end{eqnarray*}
for all $z\in\C^3$.
On the other hand, $a_\epsilon$ is smooth on $\C^3\times\C^3$,
and so there exists a constant $C>0$ such that
$$|a_\epsilon(\zeta,z) - a_\epsilon(\zeta,0)| \leq C \|z\|$$
for all $\zeta,z$ in a bounded domain. Hence, we get that
$$\|a_\epsilon(\cdot,z) - a_\epsilon(\cdot,0)\|_{L^r(X')} \leq \delta/3$$
if $\|z\|$ is small enough.

Summing up, we have found that actually
\begin{eqnarray*}
\|a(\cdot,z) - a(\cdot,0)\|_{L^r(X')} &\leq& \delta
\end{eqnarray*}
if $\|z\|$ is small enough. 

\medskip
That proves the first statement of the proposition.
For the second part, fix $1\leq s <\infty$.
Recall from Section \ref{sec:lp-forms}
that, for functions, the $L^r$-norm on $X'$ is equivalent to the $\|w\|^{-4/r} L^r$-norm on $\tilde{D}'$.
But, by the H\"older-inequality, convergence in $\|w\|^{-4/r}L^r$ implies convergence in $L^s$
if $r<\infty$ is chosen large enough. So, the second statement follows from the first one
if we just choose $1\leq r<\infty$ large enough (depending on $1\leq s<\infty$).
\end{proof}

\bigskip

\section{The $L^p$-homotopy formula for the $\dbar$-operator in the sense of distributions}\label{sec:main2}

The original $\dbar$-homotopy formula of Andersson--Samuelsson holds only for forms
on the variety $X$ which are the restriction of smooth forms on a neighborhood of the variety
(or, more generally, for forms with values in the $\mathcal{A}$-sheaves mentioned in the introduction;
see \cite{AS2}, Theorem 1.4). So, in order to extend the $\dbar$-homotopy formula to $L^p$-forms
given only on the variety, we need to approximate these in an appropriate way by smooth forms
extending to a neighborhood of the variety. To do so, we need to cut-off the forms so that they
vanish in neighborhoods of the singularity.

\smallskip
\subsection{Estimates for the cut-off procedure}\label{sec:main2b}

We will use the following cut-off functions to approximate forms
by forms with support away from the singularity in different situations.

As in \cite{PS}, Lemma 3.6, let $\rho_k: \R\rightarrow [0,1]$, $k\geq 1$, be smooth cut-off functions
satisfying 
$$\rho_k(x)=\left\{\begin{array}{ll}
1 &,\  x\leq k,\\
0 &,\  x\geq k+1,
\end{array}\right.$$
and $|\rho_k'|\leq 2$. Moreover, let $r: \R\rightarrow [0,1/2]$ be a smooth increasing function such that
$$r(x)=\left\{\begin{array}{ll}
x &,\ x\leq 1/4,\\
1/2 &,\ x\geq 3/4,
\end{array}\right.$$
and $|r'|\leq 1$.

As cut-off functions we can use 
\begin{eqnarray}\label{eq:cutoff1}
\mu_k(\zeta):=\rho_k\big(\log(-\log r(\|\zeta\|))\big)
\end{eqnarray} 
on $X$. Note that
\begin{eqnarray}\label{eq:cutoff2}
\big| \dbar \mu_k(\zeta)\big| \lesssim \frac{\chi_k(\|\zeta\|)}{\|\zeta\| \big| \log\|\zeta\|\big|},
\end{eqnarray}
where $\chi_k$ is the characteristic function of $[e^{-e^{k+1}}, e^{-e^k}]$.

\begin{thm}\label{thm:main2}
    Let $\mathcal{K}$ be integral operator from Theorem \ref{thm:main1},
    and let $\varphi\in L^2_{0,q}(X')$, $1\leq q \leq 2$. Then
\begin{eqnarray*}
\mathcal{K} \big( \dbar\mu_k \wedge \varphi \big) &\longrightarrow& 0
\end{eqnarray*}
in $L^2_{0,q}(X')$ as $k\rightarrow \infty$.
\end{thm}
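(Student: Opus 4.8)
The plan is to reduce the statement to the covering estimates of Lemma~\ref{lem:cutoff1} and Lemma~\ref{lem:cutoff2} via the transfer principle of Lemma~\ref{lmal2covering}. First I would observe that the cut-off factor $\dbar\mu_k$ is, by \eqref{eq:cutoff2}, controlled by $\chi_k(\|\zeta\|)/(\|\zeta\|\,|\log\|\zeta\||)$, whose support is exactly the annulus $X_k := \{\zeta\in X' : e^{-e^{k+1}} \le \|\zeta\| \le e^{-e^k}\}$. Pulling back under $\pi$, since $\pi^*\|\zeta\|^2 \sim \xi^4 = \|x\|^4$, this annulus corresponds (up to the constant adjustments in \eqref{eq:Dk}) to $\tilde{D}_k$, and the weight $\|\zeta\|\,|\log\|\zeta\||$ pulls back to something comparable to $\|w\|^2\,|\log\|w\||$. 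Thus the operator $\varphi \mapsto \mathcal{K}(\dbar\mu_k \wedge \varphi)$ corresponds on the covering precisely to the family of operators $\mathcal{K}^k$ from Lemma~\ref{lem:cutoff2}, with integration restricted to $\tilde{D}_k$ and the integrand carrying the factor $1/(\|w\|^2\,|\log\|w\||)$.

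The key steps, in order, are as follows. I would first write $\mathcal{K}(\dbar\mu_k \wedge \varphi)$ explicitly using the decomposition of the kernel $K = \sum K_i \wedge d\bar{z}_i$ (or with $d\bar\zeta_i$, depending on $q$) from Lemma~\ref{lmal2covering}, absorbing the smooth-in-$\zeta$ factor coming from $\dbar\mu_k$ together with the bounded factors $g_i$ into a single $L^\infty$ function, and noting that wedging $\varphi$ with $\dbar\mu_k$ produces the weight $\chi_k/(\|\zeta\|\,|\log\|\zeta\||)$. Next I would pull everything back to the covering and verify that the resulting covering operator is exactly (a finite sum of) the operators $\mathcal{K}^k$ of Lemma~\ref{lem:cutoff2}: the factor $\chi_k$ restricts integration to $\tilde{D}_k$, and the factor $1/(\|w\|^2\,|\log\|w\||)$ matches the weight in the definition of $\mathcal{K}^k$. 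Then Lemma~\ref{lem:cutoff2} gives
\begin{equation*}
    \int_{\tilde{D}} \|x\|^4 \big|\tilde{\mathcal{K}}^k_i \psi(x)\big|^2\, dV(x) \longrightarrow 0
\end{equation*}
as $k\to\infty$ for each $i$, i.e.\ convergence to $0$ in $\xi^{-2}L^2(\tilde{D})$. Finally, part~(iii) of Lemma~\ref{lmal2covering} transfers this back to convergence $\mathcal{K}(\dbar\mu_k\wedge\varphi) \to 0$ in $L^2_{0,q-1}(X)$, which is the desired conclusion.

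The main obstacle I expect is the bookkeeping in the reduction rather than any analytic estimate, since the hard analytic work is already done in Lemma~\ref{lem:cutoff2}. Specifically, one must carefully track how wedging with $\dbar\mu_k$ interacts with the minimal-representation formalism of Section~\ref{sec:lp-forms}: the form $\dbar\mu_k \wedge \varphi$ is a $(0,q{+}1)$-form (when $q=1$) and must be re-expressed so that its coefficients, after pullback, genuinely sit in $L^2(\tilde{D}_k)$ with the stated weight, and one must confirm that the factor $1/(\|\zeta\|\,|\log\|\zeta\||)$ in \eqref{eq:cutoff2} pulls back to precisely the $1/(\|w\|^2\,|\log\|w\||)$ appearing in $\mathcal{K}^k$. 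A secondary point is that when $q=2$ the operator $\mathcal{K}(\dbar\mu_k\wedge\cdot)$ need only be checked on the nonzero components, but the same covering reduction applies verbatim. Once these identifications are made precise, the convergence is immediate from Lemma~\ref{lem:cutoff2} and Lemma~\ref{lmal2covering}(iii), with no further estimation required.
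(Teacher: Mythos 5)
Your proposal is correct and follows essentially the same route as the paper: bound $\dbar\mu_k$ by \eqref{eq:cutoff2}, pass to the absolute-value operators $\mathcal{K}^k$ supported on the annuli $X_k$, pull back to the covering where $\pi^*\chi_k(\|\zeta\|)\le\chi_{\tilde{D}_k}(w)$ and the weight becomes $1/(\|w\|^2|\log\|w\||)$, and conclude via Lemma~\ref{lem:cutoff2} together with the transfer principle of Lemma~\ref{lmal2covering}(iii). The only blemishes are notational (e.g.\ writing $\xi^4=\|x\|^4$ where the relevant variable is $w$, and an off-by-one in the degree of the target space), neither of which affects the argument.
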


\begin{proof}
By \eqref{eq:cutoff2}, we see that 
$$\big|\mathcal{K}\big( \dbar\mu_k \wedge \varphi\big)\big| \lesssim 
\big|\mathcal{K}\big|\left( \frac{\chi_k(\|\zeta\|) |\varphi|}{\|\zeta\| \big| \log\|\zeta\| \big|}\right),$$
where if $\mathcal{K}$ is the integral operator defined by the integral kernel $K(\zeta,z)$,
then $|\mathcal{K}|$ is the integral operator defined by the integral kernel $|K(\zeta,z)|$.
So, let
$$\mathcal{K}^k \varphi := \big|\mathcal{K}\big|\left( \frac{\chi_k(\|\zeta\|) \varphi}{\|\zeta\| \big| \log\|\zeta\| \big|}\right)\ \ ,k\geq 1,$$
be the corresponding series of integral operators on $X_k:= X'\cap \supp \chi_k$.

Proceeding as in the proof of Theorem \ref{thm:main1} (let $g_1$, $f_1$, $\tilde{g}_1$, $\tilde{f}_1$
be as in \eqref{eqk1coveringexp}),
we see by Lemma \ref{lmal2covering}, (iii), that actually
$$\mathcal{K}^k \varphi \longrightarrow 0$$
in $L^2_*(X')$ if the kernels
\begin{align} \label{eqk1coveringexp2}
    \tilde{K}_k := \left| \pi^*\left( \frac{\chi_k(\|\zeta\|)}{\|\zeta\| \big|\log\|\zeta\|\big|} 
    \frac{g_1 f_1}{\| \eta \|^3} \omega_X \right) \wedge d\bar{w_1}\wedge d\bar{w_2} \right|
\end{align}
define a series of integral operators $\tilde{\mathcal{K}}^k$, $k\geq 1$ such that
\begin{equation} \label{eq:ktildezero}
    \tilde{\mathcal{K}}^k \varphi \longrightarrow 0
\end{equation}
in $\xi^{-2} L^2(\tilde{D})$ for any $\varphi\in L^2(\tilde{D}')$.
Since $(1/2) \|w\|^2 \leq \pi^* \|\zeta\| \leq \|w\|^2$, we get that
$\pi^* \chi_k(\|\zeta\|) \leq \chi_{\tilde{D}_k}(w)$,
where $\chi_{\tilde{D}_k}$ is the characteristic function on $\tilde{D}_k$ as given by \eqref{eq:Dk},
and we then also get that
\begin{align*}
    \tilde{K}_k \lesssim
    \frac{\chi_{\tilde{D}_k}(w)}{\|w\|^2 \big|\log\|w\|\big|} \frac{\left| \tilde{g}_1 \tilde{f}_1 \right|}{\alpha^3} \wedge dV(w).
\end{align*}
Thus, we conclude that \eqref{eq:ktildezero} holds by Lemma \ref{lem:cutoff2}.
\end{proof}

\smallskip
\subsection{Proof of Theorem \ref{thm:main3}}

In order to apply the $\dbar$-homotopy formulas of Andersson-Samuelsson to $\varphi$
we need to approximate $\varphi$ and its $\dbar$-derivative by smooth forms on a neighborhood of $X$.
This can be done appropriately by use of the cut-off functions introduced in Section \ref{sec:main2b}.
So, let 
$$\phi_k := \mu_k \varphi,$$
where $\mu_k$ is the cut-off sequence from Section \ref{sec:main2b}.
By Lebesgue's theorem on dominated convergence, note that
\begin{align}\label{eq:cutoff01}
\phi_k \rightarrow \varphi\ \ , \ \ \mu_k \dbar \varphi \rightarrow \dbar\varphi\ \ \mbox{ in } L^p_{0,*}(X').
\end{align} 
As the $\phi_k$ have support away from the singular point, we can apply Friedrichs' density lemma:
just use a standard smoothing procedure, i.e., convolution with a Dirac sequence,
on the smooth manifold $X^*$ (cf., \cite{LiMi}*{Theorem~V.2.6}). 
So, there are sequences of smooth forms $\phi_{k,l}$ with support away from the singular point
such that 
\begin{align}\label{eq:cutoff01b}
\phi_{k,l} \overset{l\rightarrow \infty}{\longrightarrow} \phi_k\ \ , 
\ \ \dbar\phi_{k,l} \overset{l\rightarrow\infty}{\longrightarrow} \dbar\phi_k\ \ \mbox{ in } L^p_{0,*}(X').
\end{align} 
Now the $\phi_{k,l}$ can be extended smoothly
to a neighborhood of $X$ and it follows by the $\dbar$-homotopy formula of Andersson-Samuelsson, \cite{AS2}, Theorem 1.4,
that
\begin{eqnarray*}%\label{eq:homotopy1}
\phi_{k,l} &=& \dbar \mathcal{K} \phi_{k,l} + \mathcal{K} \dbar \phi_{k,l}
\end{eqnarray*}
in the sense of distributions on $X$ for all $k,l\geq 1$. From this,
it follows by \eqref{eq:cutoff01b} and Theorem \ref{thm:main1} (letting $l\rightarrow \infty$)
that the homotopy formula holds for all $\phi_k$, $k\geq 1$:
\begin{eqnarray*}%\label{eq:homotopy1}
\phi_k &=& \dbar \mathcal{K} \phi_k + \mathcal{K} \dbar \phi_k \\
&=& \dbar \mathcal{K} \phi_k + \mathcal{K} \big(\mu_k \dbar \varphi\big) + \mathcal{K} \big( \dbar \mu_k \wedge \varphi\big)
\end{eqnarray*}
in the sense of distributions on $X$ for all $k\geq 1$.

\medskip
Using \eqref{eq:cutoff01} and Theorem \ref{thm:main1} again, we see that
\begin{eqnarray}\label{eq:cutoff02}
\mathcal{K} \phi_k \rightarrow \mathcal{K}\varphi \ \ \mbox{ and } 
\ \ \mathcal{K}\big(\mu_k\dbar\varphi\big) \rightarrow \mathcal{K} \dbar \varphi
\end{eqnarray}
in $L^p_*(X)$. Moreover, using $L^p_*(X') \subset L^2_*(X')$ and Theorem \ref{thm:main2},
we also get that
\begin{eqnarray}\label{eq:cutoff03}
\mathcal{K}\big(\dbar \mu_k\wedge \varphi\big) \rightarrow 0
\end{eqnarray}
in $L^2_{0,q}(X)$. So, it follows that actually
$\varphi = \dbar \mathcal{K}\varphi + \mathcal{K}\big( \dbar \varphi\big)$
in the sense of distributions on $X$.

\bigskip
%\newpage

\section{Other variants of the $\dbar$-operator} \label{sec:main3}

\smallskip
\subsection{The strong $\dbar$-operator $\dbar_s$}

In this section, we give the proof of Theorem \ref{thm:main4}.

Note first that in order to prove that
$\phi \in \Dom \dbar_s \subset L^2_{0,q}(X)$, it is sufficient to
find a sequence $\{ \phi_j \}_j \subset \Dom \dbar_w \subset L^2_{0,q}(X)$
with $\esssupp \phi_j \cap \{ 0 \} = \emptyset$ such that
\begin{eqnarray}
\phi_j \rightarrow \phi \ \ \ &\mbox{ in }& \ \ L^2_{0,q}(X),\\
\dbar \phi_j \rightarrow \dbar \phi \ \ \ &\mbox{ in }& \ \ L^2_{0,q+1}(X),
\end{eqnarray}
i.e., it is not necessary to assume that the $\phi_j$ are smooth, since
if we assume that the $\phi_j$'s have support outside of the singular set of $X$,
then by Friedrichs' extension lemma, \cite{LiMi}*{Theorem~V.2.6}, on the complex manifold $X^*$,
there exists smooth $\tilde{\phi}_j \in L^2_{0,q}(X)$ with support away from $\{0\}$ such that $\|\phi_j-\tilde{\phi}_j\|_{L^2}$
and $\|\dbar \phi_j-\dbar\tilde{\phi}_j\|_{L^2}$ are arbitrarily small.

So, let $\varphi\in \Dom \dbar_w \subseteq L^2_{0,q}(X')$, where $1\leq q \leq 2$.
Let $\mu_k$ be the cut-off sequence from Section \ref{sec:main2b} and set
$$\phi_k := \mu_k \mathcal{K} \varphi.$$
Then $\{\phi_k\}_k \subset L^2_{0,q-1}(X)$ and it follows by Lebesgue's theorem on dominated convergence
that
\begin{eqnarray*}
\phi_k \rightarrow \mathcal{K}\varphi \ \ \ &\mbox{ in }& \ \ L^2_{0,q-1}(X),\\
\dbar \phi_k -\dbar\mu_k\wedge \mathcal{K}\varphi = 
\mu_k \dbar \mathcal{K}\varphi \rightarrow \dbar \mathcal{K} \varphi \ \ \ &\mbox{ in }& \ \ L^2_{0,q}(X)
\end{eqnarray*}
as $k\rightarrow \infty$ since $\mathcal{K} \varphi \in L^2_{0,q-1}(X)$ by Theorem~\ref{thm:main1},
and thus, $\dbar \mathcal{K} \varphi \in L^2_{0,q}(X)$ by Theorem~\ref{thm:main3}.
To see that actually 
\begin{eqnarray}\label{eq:claim}
\mathcal{K} \varphi &\in& \Dom\dbar_s \subset L^2_{0,q-1}(X),
\end{eqnarray}
we claim that it is enough to show that the set of forms
\begin{eqnarray}\label{eq:claim2}
\big\{\dbar \mu_k \wedge \mathcal{K}\varphi \big\}_k
\end{eqnarray}
is uniformly bounded in $L^2_{0,q}(X)$. This can be seen by the following duality argument:

\medskip
{\bf Proof of the claim:}
Let \eqref{eq:claim2} be uniformly bounded in $L^2_{0,q}(X)$, independent of $k$.
We can assume that $\mathcal{K}\varphi$ has compact support in a small neighborhood, say $V$, of the origin.
Then, refeering to the notation in \cite{RuDuke}, Section 2.4,
we need to show that $\mathcal{K}\varphi\in\Dom \dbar_{min}$.

But, on the Hermitian manifold $X\cap V\setminus \{0\}$,
the $L^2$-adjoint operator of $\dbar_{min}$ is $\vartheta_{max}$ (see \cite{RuDuke}, Section 2.4).
So, to show the claim, we have to prove that
\begin{eqnarray}\label{eq:claim3}
\left ( \mathcal{K}\varphi, \vartheta_{max} g\right)_{L^2(X)} &=& \left( \dbar \mathcal{K}\varphi, g\right)_{L^2(X)}
\end{eqnarray}
for all $g\in \Dom\vartheta_{max} \subset L^2_{0,q+1}(X\cap V)$.
For such a $g$, we compute:
\begin{eqnarray*}
\left ( \mathcal{K}\varphi, \vartheta_{max} g\right)_{L^2(X)} &=&
\lim_{k\rightarrow\infty} \left ( \phi_k, \vartheta_{max} g\right)_{L^2(X)}\\
&=& \lim_{k\rightarrow\infty} \left ( \dbar \phi_k, g\right)_{L^2(X)}\\
&=&  \left( \dbar \mathcal{K}\varphi, g\right)_{L^2(X)}
+ \lim_{k\rightarrow\infty} \left ( \dbar \mu_k \wedge \mathcal{K}\varphi, g\right)_{L^2(X)}.
\end{eqnarray*}
But, as \eqref{eq:claim2} is uniformly bounded, we have furthermore:
\begin{eqnarray*}
\left|\left ( \dbar \mu_k \wedge \mathcal{K}\varphi, g\right)_{L^2(X)}\right|
&\lesssim& \|g\|_{L^2(\supp \dbar\mu_k)} \overset{k\rightarrow \infty}{\longrightarrow} 0,
\end{eqnarray*}
because $g$ is square-integrable and the domain of integration vanishes.
This proves the claim.

\qed

\bigskip
To show that \eqref{eq:claim2} is uniformly bounded,
we proceed similarly as in the proof of Theorem \ref{thm:main2}.
By \eqref{eq:cutoff2}, we see that 
$$\left| \dbar \mu_k\wedge \mathcal{K}\varphi \big(z\big) \right| \lesssim  \frac{\chi_k(\|z\|)}{\|z\| \big| \log\|z\| \big|} \wedge \left| \mathcal{K}\varphi (z) \right| .$$
So, let
$$\mathcal{K}^k \varphi (z):=  \frac{\chi_k(\|z\|)}{\|z\| \big| \log\|z\| \big|}\wedge\mathcal{K}\varphi(z) \ \ ,k\geq 1,$$
be the corresponding series of integral operators on $X'$.

Proceeding as in the proof of Theorem \ref{thm:main1} (let $g_1$, $f_1$, $\tilde{g}_1$, $\tilde{f}_1$
be as in \eqref{eqk1coveringexp}),
we see by Lemma \ref{lmal2covering}, (iv), that actually
$$\{\mathcal{K}^k \varphi\}_k$$
is uniformly bounded in $L^2_*(X)$ if the kernels
\begin{align} \label{eqk1coveringexp3}
    \tilde{K}_k := \left| \pi^*\left( \frac{\chi_k(\|z\|)}{\|z\| \big|\log\|z\|\big|} 
    \frac{g_1 f_1}{\| \eta \|^3} \omega_X \right) \wedge d\bar{w_1}\wedge d\bar{w_2} \right|
\end{align}
define a series of integral operators $\tilde{\mathcal{K}}^k$ on $\tilde{D}$ 
such that
\begin{equation}
    \label{eq:Ktilde}
    \{\tilde{\mathcal{K}}^k\varphi \}_k
\end{equation}
is uniformly bounded in $\xi^{-2} L^2(\tilde{D})$
for any $\varphi\in L^2(\tilde{D})$. As in the end of the proof of Theorem \ref{thm:main2},
we get that
\begin{equation*}
    \tilde{K}_k \lesssim \frac{\chi_{\tilde{D}_k}(x)}{\|x\|^2 \big|\log\|x\|\big|} \frac{| \tilde{g}_1 \tilde{f}_1 |}{\alpha^3} \wedge dV(w),
\end{equation*}
and thus, \eqref{eq:Ktilde} is uniformly bounded by Lemma \ref{lem:cutoff1}.

\medskip
\subsection{Andersson--Samuelsson's operator $\dbar_X$}

In this section, we give the proof of Theorem \ref{thm:main5}.

By Theorem \ref{thm:main4}, $\mathcal{K}\varphi \in \Dom\dbar_s$. So, there is a sequence $\{\psi_j\}_j$
of smooth forms with support away from the singular point, $\supp \psi_j \cap \{0\} =\emptyset$,
and such that 
\begin{eqnarray}\label{eq:conv}
\psi_j \rightarrow \mathcal{K}\varphi\ \ \mbox{ and }\ \ \dbar\psi_j \rightarrow \dbar \mathcal{K}\varphi
\end{eqnarray}
in the $L^2$-sense on $X$ as $j\rightarrow \infty$ (see \eqref{eq:dbars1}, \eqref{eq:dbars2}).

By \cite{AS2}, Proposition 1.5, $\mathcal{K}\varphi \in \mathcal{W}(X)$. In addition, since we assume that $\dbar \varphi \in L^2$,
we get by Theorem~\ref{thm:main1} that $\mathcal{K} \dbar \varphi \in L^2(X)$, and by Theorem~\ref{thm:main3}, we then get
that $\dbar \mathcal{K} \varphi \in L^2(X)$. Since $\mathcal{K} \varphi \in \mathcal{W}(X) \subseteq \PM(X)$, also 
$\dbar \mathcal{K} \varphi \in \PM(X)$ since $\PM(X)$ is closed under $\dbar$. Hence, $\dbar \mathcal{K} \varphi \in L^2(X) \cap \PM(X)$,
and by dominated convergence, we get that $\dbar \mathcal{K} \varphi \in \mathcal{W}(X)$.

We have to show that
\begin{eqnarray}\label{eq:dbarX1}
\dbar \big( \mathcal{K}\varphi\wedge \omega_X\big) = \big( \dbar\mathcal{K}\varphi\big) \wedge \omega_X
\end{eqnarray}
in the sense of distributions (see \cite{AS2}, Proposition 4.4). 
But $\omega_X \in L^2_{2,0}(X)$ by \eqref{eq:omegaX1} and Lemma \ref{lem:lpbound} (consider $\overline{\omega_X}$).
So, $\psi_j \rightarrow \mathcal{K}\varphi$ in $L^2_{0,q-1}(X)$ implies by use of the H\"older inequality
that $\psi_j\wedge \omega_X \rightarrow \mathcal{K}\varphi\wedge \omega_X$ in the sense of distributions.
By the same argument, we see that 
$\big(\dbar \psi_j\big)\wedge \omega_X \rightarrow \big(\dbar\mathcal{K}\varphi\big)\wedge \omega_X$ in the sense of distributions.
But $\psi_j\in \Dom \dbar_X$, i.e., $\dbar\big( \psi_j\wedge\omega_X\big) = \big(\dbar\psi_j \big)\wedge\omega_X$ in the sense of distributions.
So, we actually have
\begin{eqnarray*}
\dbar \big( \mathcal{K}\varphi\wedge \omega_X\big) = \lim_{j\rightarrow\infty} \dbar \big( \psi_j\wedge \omega_X\big)
= \lim_{j\rightarrow \infty} \big(\dbar\psi_j\big) \wedge\omega_X = \big(\dbar\mathcal{K}\varphi\big)\wedge\omega_X
\end{eqnarray*}
in the sense of distributions.

\bigskip
%\newpage

\appendix

\section{Estimates for integral kernels in $\C^n$}\label{sec:appendix}

\begin{lma}\label{lem:estimateCn1}
Let $\alpha\in \R$.
Then there exists a constant $C_\alpha>0$ such that the following holds:
\begin{align}\label{eq:estimateCn1}
I(r_1,r_2) := \int_{B_{r_2}(x)\setminus \overline{B_{r_1}(x)}} 
\frac{dV_{\C^n}(\zeta)}{\|\zeta-x\|^\alpha} \leq C_\alpha \left\{ 
\begin{array}{ll}
r_2^{2n-\alpha} & \ ,\ \alpha<2n,\\
|\log r_2| + |\log r_1|& \ ,\ \alpha=2n,\\
r_1^{2n-\alpha} & \ ,\ \alpha>2n,
\end{array}\right.
\end{align}
for all $x\in \C^n$ and all $0<r_1 \leq r_2< \infty$.
\end{lma}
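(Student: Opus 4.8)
The plan is to reduce the estimate to a one-dimensional radial integral by translation invariance and polar coordinates, and then to carry out an elementary case analysis on the exponent.

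First I would note that, since Lebesgue measure on $\C^n \cong \R^{2n}$ is translation invariant and the integrand depends only on $\|\zeta - x\|$, the value $I(r_1,r_2)$ does not depend on $x$; thus I may take $x = 0$ and write the region of integration as the spherical shell $\{\, r_1 < \|\zeta\| < r_2 \,\}$. Passing to polar coordinates in $\R^{2n}$, writing $\rho = \|\zeta\|$ and letting $\omega_{2n-1}$ denote the surface area of the unit sphere $S^{2n-1} \subset \R^{2n}$, the volume element is $\rho^{2n-1}\,d\rho\,d\sigma$, so the integral reduces to
$$I(r_1,r_2) = \omega_{2n-1} \int_{r_1}^{r_2} \rho^{2n-1-\alpha}\, d\rho.$$

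Next I would evaluate this radial integral, distinguishing whether the exponent $2n-1-\alpha$ equals $-1$ or not. If $\alpha \neq 2n$, then
$$\int_{r_1}^{r_2} \rho^{2n-1-\alpha}\,d\rho = \frac{r_2^{2n-\alpha} - r_1^{2n-\alpha}}{2n-\alpha},$$
while if $\alpha = 2n$ the integral is exactly $\log r_2 - \log r_1$.

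Finally I would read off the three cases of the claimed bound. When $\alpha < 2n$ the exponent $2n-\alpha$ is positive, so dropping the nonnegative term $r_1^{2n-\alpha}$ bounds the integral by $(2n-\alpha)^{-1} r_2^{2n-\alpha}$; when $\alpha > 2n$ the exponent is negative, so rewriting the difference as $(r_1^{2n-\alpha} - r_2^{2n-\alpha})/(\alpha-2n)$ and dropping $r_2^{2n-\alpha} \geq 0$ bounds it by $(\alpha-2n)^{-1} r_1^{2n-\alpha}$; and when $\alpha = 2n$ we have $\log r_2 - \log r_1 \leq |\log r_2| + |\log r_1|$. Taking $C_\alpha$ to be $\omega_{2n-1}$ times the appropriate constant ($|2n-\alpha|^{-1}$ in the first two cases, and $\omega_{2n-1}$ itself in the third) yields the estimate uniformly in $x$ and in $0 < r_1 \leq r_2 < \infty$. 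There is no genuine obstacle here; the only point requiring care is matching the sign of $2n-\alpha$ to the correct endpoint, namely $r_2$ in the subcritical range and $r_1$ in the supercritical range, which is precisely what makes the two non-logarithmic cases asymmetric.
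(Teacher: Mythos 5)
Your proposal is correct and follows essentially the same route as the paper: both reduce to the radial integral $\int_{r_1}^{r_2} t^{2n-1-\alpha}\,dt$ (the paper via Fubini over spheres $bB_t(x)$, you via translation invariance and polar coordinates, which is the same computation) and then perform the identical three-way case analysis on the sign of $2n-\alpha$.
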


\begin{proof}
A simple calculation, using Fubini, gives:
\begin{eqnarray*}
I(r_1,r_2) &=& \int_{r_1}^{r_2} \int_{bB_t(x)} \frac{dS_{bB_t(x)}(\zeta)}{t^\alpha} dt
\sim \int_{r_1}^{r_2} \frac{t^{2n-1}}{t^\alpha} dt \\
&\lesssim& \left\{
\begin{array}{ll}
r_2^{2n-\alpha} - r_1^{2n-\alpha} & , \alpha <2n\\
\log r_2 - \log r_1 & , \alpha=2n\\
r_1^{2n-\alpha} - r_2^{2n-\alpha} & , \alpha > 2n
\end{array}\right\}
\leq  \left\{
\begin{array}{ll}
r_2^{2n-\alpha}  & , \alpha <2n,\\
 |\log r_2| + |\log r_1| & , \alpha=2n,\\
r_1^{2n-\alpha} & , \alpha > 2n.
\end{array}\right.
\end{eqnarray*}
\end{proof}

From that we can deduce our first basic estimate:

\begin{lma}\label{lem:estimateCn2}
Let $D\subset \subset \C^n$ be a bounded domain and $0 \leq \alpha,\beta <2n$. 
Then there exists a constant $C_1>0$ such that the following holds:
\begin{align}\label{eq:estimateCn2}
\int_{D} \frac{dV_{\C^n}(\zeta)}{\|\zeta-x_1\|^\alpha \|\zeta-x_2\|^\beta}
\leq C_1 \left\{
\begin{array}{ll}
1 & \ ,\ \alpha+\beta<2n,\\
1+ \big|\log \|x_1 - x_2\|\big| &\ ,\ \alpha+\beta=2n,\\
\|x_1 - x_2\|^{2n-\alpha-\beta} &\ ,\ \alpha+\beta>2n,
\end{array}\right.
\end{align}
for all $x_1, x_2 \in \C^n$ with $x_1\neq x_2$.
\end{lma}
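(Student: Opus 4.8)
The plan is to split the domain of integration according to the two singular points. Writing $d = \|x_1 - x_2\|$, I would decompose $D = A_1 \cup A_2 \cup A_3$, where $A_1 = D \cap B_{d/2}(x_1)$, $A_2 = D \cap B_{d/2}(x_2)$, and $A_3$ is the remainder, on which both $\|\zeta - x_1\| \geq d/2$ and $\|\zeta - x_2\| \geq d/2$. On $A_1$ the triangle inequality gives $\|\zeta - x_2\| \geq d/2$, so the factor $\|\zeta - x_2\|^{-\beta}$ is bounded by $(d/2)^{-\beta}$ and pulled out; the remaining single-singularity integral is estimated by Lemma~\ref{lem:estimateCn1} (with $r_1 \to 0$), yielding $\int_{A_1} \lesssim d^{-\beta} (d/2)^{2n-\alpha} \lesssim d^{2n-\alpha-\beta}$, and symmetrically for $A_2$. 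On $A_3$, the bounds $\|\zeta - x_i\| \geq d/2$ force $\|\zeta - x_1\| \sim \|\zeta - x_2\|$ (since $\|\zeta - x_2\| \leq \|\zeta - x_1\| + d \leq 3\|\zeta - x_1\|$ and symmetrically), so the integrand is comparable to $\|\zeta - x_1\|^{-(\alpha+\beta)}$ and the problem reduces to a single-singularity integral with exponent $\alpha + \beta$.

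As a preliminary I expect to need the uniform bound $\sup_{x} \int_D \|\zeta - x\|^{-\gamma} dV \lesssim 1$ for $0 \leq \gamma < 2n$, which I call (*); it follows by splitting on whether $\dist(x, D)$ is smaller or larger than the diameter $\Delta$ of $D$, since in the near case $D$ sits inside a fixed ball about $x$ and Lemma~\ref{lem:estimateCn1} applies, while in the far case the integrand is bounded. With (*) in hand, for $\alpha + \beta < 2n$ the $A_3$-integral is bounded by (*) over all of $D$, while $\int_{A_1}$ is controlled either by $d^{2n-\alpha-\beta}$ (for $d \leq 1$) or by $(d/2)^{-\beta}$ times (*) (for $d \geq 1$), giving a uniform constant. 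For $\alpha + \beta > 2n$ all three pieces are genuinely $\lesssim d^{2n-\alpha-\beta}$ — the $A_3$-integral now by Lemma~\ref{lem:estimateCn1} over the exterior $\{\|\zeta - x_1\| \geq d/2\}$, whose exponent exceeds $2n$ — and summing gives the claim for every $d$.

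The logarithmic borderline $\alpha + \beta = 2n$ is where I expect the real work, since the $A_3$-integral of $\|\zeta - x_1\|^{-2n}$ over an annulus needs control of the outer radius, which is not uniform in $x_1$. Here I would first dispose of the case where one of $x_1, x_2$ lies far from $D$, say at distance exceeding $2\Delta$: then the corresponding factor is bounded, the opposite exponent is $< 2n$, and (*) bounds the whole integral by a constant, which is $\leq C_1(1 + |\log d|)$ because $1 + |\log d| \geq 1$. In the complementary case both points lie within $2\Delta$ of $D$, so $D \subseteq B_{3\Delta}(x_1)$; then $A_3$ sits in the annulus $\{d/2 \leq \|\zeta - x_1\| \leq 3\Delta\}$ and Lemma~\ref{lem:estimateCn1} with exponent $2n$ gives $\lesssim |\log d| + |\log \Delta| \lesssim 1 + |\log d|$, while $\int_{A_1}, \int_{A_2} \lesssim d^{0} = 1$. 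The main obstacle throughout is thus not any single computation but organizing the case distinctions so that the estimate stays uniform over all $x_1, x_2 \in \C^n$; once the near/far-from-$D$ dichotomy is set up, each piece follows directly from Lemma~\ref{lem:estimateCn1}.
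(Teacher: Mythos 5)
Your proposal is correct and follows essentially the same route as the paper: split $D$ into two small balls around $x_1$ and $x_2$ plus the remainder, freeze the harmless factor on each ball, and reduce the remainder to a single-singularity integral via $\|\zeta-x_1\|\sim\|\zeta-x_2\|$, all estimated by Lemma~\ref{lem:estimateCn1}. The only (cosmetic) difference is that the paper dispatches the uniformity over all $x_1,x_2\in\C^n$ with the one-line reduction to $\dist(x_1,D)<R/2$ (so $D\subset B_R(x_1)$), where you instead set up an explicit near/far dichotomy together with your auxiliary bound (*); both work.
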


\begin{proof}
Let $R/2$ be the diameter of $D$ in $\C^n$.
We can assume that $D$ is not empty and that $R/2>0$.
Further, we can assume that $\dist_{\C^n}(D,x_1) < R/2$ (otherwise, the estimate just gets easier).
This implies
\begin{eqnarray}\label{eq:R1}
D \subset B_R(x_1).
\end{eqnarray}

Let $\delta:=\|x_1-x_2\|/3$.
We divide the domain of integration in three regions $D_1$, $D_2$ and $D\setminus(D_1\cup D_2)$. Let 
$$D_1:= D \cap B_{\delta}(x_1)\ \ ,\ \ D_2:= D \cap B_{\delta}(x_2)$$
Then $\|\zeta-x_2\| \geq \delta$ on $D_1$ and so
\begin{align}\label{eq:es11}
\int_{D_1} \frac{dV_{\C^n}(\zeta)}{\|\zeta-x_1\|^\alpha \|\zeta-x_2\|^\beta} \leq \delta^{-\beta}
\int_{B_{\delta}(x_1)} \frac{dV_{\C^n}(\zeta)}{\|\zeta-x_1\|^\alpha}
&\leq& C_\alpha \delta^{-\beta+2n-\alpha}
\end{align}
by use of Lemma \ref{lem:estimateCn1} (using $\alpha<2n$ and letting $r_2=\delta$, $r_1\rightarrow 0$).

As $\|\zeta-x_1\|\geq \delta$ on $D_2$, analogously:
\begin{eqnarray}\label{eq:es12}
\int_{D_2} \frac{dV_X(\zeta)}{\|\zeta-x_1\|^\alpha \|\zeta-x_2\|^\beta} 
&\leq& C_\beta \delta^{-\alpha+2n-\beta}
\end{eqnarray}

It remains to consider the integral over $D \setminus (D_1\cup D_2)$.
Here, $\|\zeta-x_2\|\geq \delta$ and that yields:
\begin{eqnarray*}
\|\zeta - x_1\| \leq \|\zeta-x_2\| + \|x_1-x_2\| = \|\zeta- x_2\| + 3 \delta \leq 4 \|\zeta-x_2\|.
\end{eqnarray*}
So, we can estimate by use of \eqref{eq:R1} and Lemma \ref{lem:estimateCn1}:
\begin{eqnarray*}
&& \int_{D \setminus (D_1\cup D_2)} \frac{dV_{\C^n} (\zeta)}{\|\zeta-x_1\|^\alpha \|\zeta-x_2\|^\beta}
 \leq 4^{\beta} \int_{B_{R}(x_1)\setminus \overline{B_{\delta}(x_1)}} \frac{dV_{\C^n}(\zeta)}{\|\zeta-x_1\|^{\alpha+\beta}}\\
&\leq& 4^{\beta} C_{\alpha+\beta}  \left\{ 
\begin{array}{ll}
R^{2n-\alpha-\beta} & \ ,\ \alpha+\beta<2n,\\
|\log R|+|\log \delta| & \ ,\ \alpha+\beta=2n,\\
\delta^{2n-\alpha-\beta} & \ ,\ \alpha+\beta>2n,
\end{array}\right.
\end{eqnarray*}
The assertion follows easily from this statement in combination with \eqref{eq:es11} and \eqref{eq:es12}.
\end{proof}

Another basic estimate is:

\begin{lma}\label{lem:estimateCn3}
Let $D\subset \subset \C^n$ be a bounded domain, $0 \leq \alpha,\beta <2n$ and $\gamma > -2n$. 
Then there exists a constant $C_2>0$ such that the following holds:
\begin{align}\label{eq:estimateCn3}
\int_{D} \frac{\|\zeta\|^\gamma dV_{\C^n}(\zeta)}{\|\zeta-x\|^\alpha \|\zeta+x\|^\beta}
\leq C_2 \left\{
\begin{array}{ll}
1 & \ ,\ \alpha+\beta<2n+\gamma,\\
1+ \big|\log \|x\|\big| &\ ,\ \alpha+\beta=2n+\gamma,\\
\|x\|^{2n+\gamma-\alpha-\beta} &\ ,\ \alpha+\beta>2n+\gamma,
\end{array}\right.
\end{align}
for all $x \in \C^n$ with $x\neq 0$.
\end{lma}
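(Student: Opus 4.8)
The plan is to split the domain of integration according to the size of $\|\zeta\|$ relative to $\|x\|$, so that in each piece the three competing singularities — the weight $\|\zeta\|^\gamma$ at the origin and the poles at $x$ and $-x$ — decouple. Fix $R$ with $D \subseteq B_R(0)$. First I would dispose of the regime $\|x\| \geq 2R$: then $\|\zeta\| \leq R \leq \|x\|/2$ for every $\zeta \in D$, so $\|\zeta \pm x\| \geq \|x\| - \|\zeta\| \geq \|x\|/2$, and the estimate follows at once by pulling out $\|x\|^{-\alpha-\beta}$ and using $\int_D \|\zeta\|^\gamma\,dV < \infty$ (finite because $\gamma > -2n$), which is dominated by the right-hand side in each of the three cases. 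Having reduced to $\|x\| \leq 2R$, I would write $D = D_{\mathrm{near}} \cup D_{\mathrm{far}}$ with $D_{\mathrm{near}} = D \cap B_{2\|x\|}(0)$ and $D_{\mathrm{far}} = D \setminus B_{2\|x\|}(0)$.

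On $D_{\mathrm{far}}$ one has $\|\zeta\| > 2\|x\|$, hence $\|\zeta \pm x\| \geq \|\zeta\| - \|x\| > \|\zeta\|/2$, so the integrand collapses to a multiple of $\|\zeta\|^{\gamma - \alpha - \beta}$. Bounding the integral over $D_{\mathrm{far}}$ by the radial integral $\int_{2\|x\|}^{R} t^{2n-1+\gamma-\alpha-\beta}\,dt$, exactly as in Lemma~\ref{lem:estimateCn1}, produces the three claimed regimes according to the sign of $\mu := 2n + \gamma - \alpha - \beta$: a constant when $\mu > 0$, a term $\lesssim 1 + |\log\|x\||$ when $\mu = 0$, and $\lesssim \|x\|^{\mu}$ when $\mu < 0$.

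For $D_{\mathrm{near}}$ I would bound the integral over $D \cap B_{2\|x\|}(0)$ by the integral over all of $B_{2\|x\|}(0)$ (the integrand being positive) and rescale by $\zeta = \|x\| w$, $x = \|x\|\hat x$ with $|\hat x| = 1$. This turns the contribution into $\|x\|^{\mu} \int_{B_2(0)} \|w\|^\gamma \|w - \hat x\|^{-\alpha} \|w + \hat x\|^{-\beta}\,dV(w)$, where the three singular points $0, \hat x, -\hat x$ are now at mutual distance $\geq 1$. Since $\gamma > -2n$ and $\alpha, \beta < 2n$, each singularity is integrable, and decomposing $B_2(0)$ into three small balls about these points plus a remainder — the device already used in the proof of Lemma~\ref{lem:estimateCn2} — bounds this integral by a constant uniformly in the direction $\hat x \in S^{2n-1}$. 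Hence $D_{\mathrm{near}}$ contributes $\lesssim \|x\|^{\mu}$, which in each of the three cases is dominated by the corresponding right-hand side (recall $\|x\|$ is now bounded).

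Finally I would add the two contributions: in all three cases the near-term $\|x\|^\mu$ is dominated by the far-term's bound — a constant when $\mu > 0$, equal to $1$ when $\mu = 0$, and of the same order $\|x\|^\mu$ when $\mu < 0$ — which yields the stated estimate. The main obstacle to watch is the intermediate annular regime $\|\zeta\| \sim \|x\|$, where $\zeta$ may sit away from all three singularities yet none of the three factors is negligible; the scaling substitution is precisely what tames this region, converting it into a fixed integral over $B_2(0)$ with unit-separated poles whose boundedness is uniform in $\hat x$. A secondary point requiring care is the bookkeeping for $\|x\|$ comparable to $R$, which is why one reduces to bounded $\|x\|$ at the outset.
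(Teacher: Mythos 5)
Your argument is correct. It differs from the paper's proof mainly in how the region near the three singular points is handled. The paper splits $D$ into the three disjoint balls $B_{\|x\|/3}(0)$, $B_{\|x\|/3}(x)$, $B_{\|x\|/3}(-x)$ and a remainder; on each ball two of the three factors are frozen at size $\sim\|x\|$ and the remaining singularity is integrated via Lemma~\ref{lem:estimateCn1}, each ball contributing $\|x\|^{2n+\gamma-\alpha-\beta}$, while on the remainder the comparisons $\|\zeta\|\le 4\|\zeta\mp x\|$ collapse the integrand to $\|\zeta\|^{\gamma-\alpha-\beta}$ and a radial integral finishes the job. You instead take the single ball $B_{2\|x\|}(0)$ containing all three singular points and exploit the exact homogeneity of the kernel under $\zeta=\|x\|w$, so the near contribution becomes $\|x\|^{\mu}$ ($\mu=2n+\gamma-\alpha-\beta$) times a fixed integral over $B_2(0)$ with unit-separated, individually integrable singularities, bounded uniformly in the direction $\hat x$; your far region is treated exactly as the paper's remainder. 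The two routes produce identical bounds in all three regimes (and your preliminary reduction to $\|x\|\le 2R$ and the final comparison of the near term $\|x\|^\mu$ against the far term's bound are both carried out correctly). What the scaling buys is that the exponent $\mu$ appears for structural reasons and the three ball estimates are replaced by one uniform integrability statement; what the paper's route buys is that it is entirely elementary and reuses Lemma~\ref{lem:estimateCn1} verbatim, with no compactness or uniformity-in-$\hat x$ discussion needed.
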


\begin{proof}
We can proceed similar as in the proof of Lemma \ref{lem:estimateCn2},
but have to divide $D$ into four domains. We can assume that $D$ is contained in a ball $B_R(0)$.

Let $\delta:=\|x\|/3$ and set
$$D_0:= B_\delta(0)\ \ ,\ \ D_1:= B_\delta(x)\ \ ,\ \ D_2:= B_\delta(-x).$$
Then $\|\zeta + x\|=\|(\zeta-x) + 2x\| \geq 5\delta$ and $\|\zeta\| \leq 4 \delta$ on $D_1$ and so we obtain
\begin{align}\label{eq:es13}
\int_{D_1} \frac{\|\zeta\|^\gamma dV_{\C^n}(\zeta)}{\|\zeta-x\|^\alpha \|\zeta+x\|^\beta}
\lesssim  \delta^{\gamma-\beta} \int_{B_\delta(x)} \frac{dV_{\C^n}(\zeta)}{\|\zeta-x\|^\alpha}
\leq C_\alpha  \delta^{2n+\gamma-\alpha-\beta}
\end{align}
by use of Lemma \ref{lem:estimateCn1}. Analogously,
\begin{align}\label{eq:es14}
\int_{D_2} \frac{\|\zeta\|^\gamma dV_{\C^n}(\zeta)}{\|\zeta-x\|^\alpha \|\zeta+x\|^\beta}
\lesssim  \delta^{\gamma-\alpha} \int_{B_\delta(-x)} \frac{dV_{\C^n}(\zeta)}{\|\zeta+x\|^\beta}
\leq C_\beta  \delta^{2n+\gamma-\alpha-\beta}.
\end{align}
Similarly, we have $\|\zeta-x\|\geq 2\delta$ and $\|\zeta+x\|\geq 2\delta$ on $D_0$
and that gives
\begin{align}\label{eq:es15}
\int_{D_0} \frac{\|\zeta\|^\gamma dV_{\C^n}(\zeta)}{\|\zeta-x\|^\alpha \|\zeta+x\|^\beta}
\leq  \delta^{-\alpha-\beta} \int_{B_\delta(0)} \|\zeta\|^\gamma dV_{\C^n}
\leq C_\gamma  \delta^{2n+\gamma-\alpha-\beta}.
\end{align}
Finally, we have to consider $D\setminus\big(D_0\cup D_1\cup D_2\big)$.
Here,
$$\|\zeta\| \leq \|\zeta-x\| + \|x\| = \|\zeta-x\| + 3\delta \leq 4 \|\zeta-x\|,$$
and analogously $\|\zeta\| \leq 4 \|\zeta+x\|$.
From that we deduce:
\begin{eqnarray*}
&& \int_{D\setminus\big(D_0\cup D_1\cup D_2\big)} \frac{\|\zeta\|^\gamma dV_{\C^n}(\zeta)}{\|\zeta-x\|^\alpha \|\zeta+x\|^\beta}
\leq 4^{\alpha+\beta} \int_{B_R(0) \setminus \overline{B_\delta(0)}} \|\zeta\|^{\gamma-\alpha-\beta} dV_{\C^n}(\zeta)\\
&\leq& 4^{\alpha+\beta} C_{\alpha+\beta-\gamma}  \left\{ 
\begin{array}{ll}
R^{2n+\gamma-\alpha-\beta} & \ ,\ \alpha+\beta<2n+\gamma,\\
|\log R|+|\log \delta| & \ ,\ \alpha+\beta=2n+\gamma,\\
\delta^{2n+\gamma-\alpha-\beta} & \ ,\ \alpha+\beta>2n+\gamma,
\end{array}\right.
\end{eqnarray*}
The assertion follows easily from this in combination with \eqref{eq:es13}, \eqref{eq:es14} and \eqref{eq:es15}.
\end{proof}

For use in cut-off procedures, we need also:

\begin{lma}\label{lem:estimateCn4}
Let $n\geq 2$. Moreover,
let $0\leq \gamma\leq 6$ and $0 \leq \alpha,\beta <2n$ with $\alpha+\beta=2n+2\geq 6$. 
Then there exists a constant $C_3>0$ such that the following holds:
\begin{eqnarray*}\label{eq:estimateCn4}
\|x\|^{6-\gamma}\int_{B_{\epsilon_{k-1}}(0) \setminus \overline{B_{\epsilon_{k+1}}(0)}} \frac{\|\zeta\|^{\gamma -4}
dV_{\C^n}(\zeta)}{\|\zeta-x\|^\alpha \|\zeta+x\|^\beta \big|\log\|\zeta\|\big|} \leq  C_3
\end{eqnarray*}
for all $x \in \C^n$ and all $k\in\Z$, $k\geq 1$, where $\epsilon_k=e^{-e^k/2}$.
\end{lma}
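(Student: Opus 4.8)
The plan is to exploit that on the annulus $A_k := B_{\epsilon_{k-1}}(0)\setminus\overline{B_{\epsilon_{k+1}}(0)}$ the weight $|\log\|\zeta\||$ is essentially constant of size $e^k$, and then to reduce to Lemma~\ref{lem:estimateCn3}, the only genuine difficulty being a critical exponent at which the naive reduction loses a logarithm. First I would observe that for $\zeta\in A_k$ one has $\log\|\zeta\|\in(-e^{k+1}/2,-e^{k-1}/2)$, hence
\begin{equation*}
    \frac{e^{k-1}}{2}\leq\big|\log\|\zeta\|\big|\leq\frac{e^{k+1}}{2},
\end{equation*}
so that $|\log\|\zeta\||\sim e^k$ on $A_k$. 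Replacing $1/|\log\|\zeta\||$ by its upper bound $\lesssim e^{-k}$ reduces the claim to showing
\begin{equation*}
    \|x\|^{6-\gamma}\int_{A_k}\frac{\|\zeta\|^{\gamma-4}\,dV_{\C^n}(\zeta)}{\|\zeta-x\|^\alpha\|\zeta+x\|^\beta}\lesssim e^k
\end{equation*}
uniformly in $x\in\C^n$ and $k\geq1$. The key numerology is that, since $\alpha+\beta=2n+2$, the exponent comparison governing Lemma~\ref{lem:estimateCn3} with weight $\gamma'=\gamma-4$ is $\alpha+\beta$ against $2n+\gamma'=2n+\gamma-4$, whose difference is exactly $6-\gamma\geq0$; thus we are always in the critical-or-supercritical regime of that lemma.

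For $0\le\gamma<6$, away from the borderline $n=2,\gamma=0$ (where $\gamma-4=-2n$ and Lemma~\ref{lem:estimateCn3} just fails to apply), the inequality is strict, so enlarging $A_k$ to a fixed ball $B_R(0)\supseteq A_k$ (possible since $\epsilon_{k-1}<1$) and invoking Lemma~\ref{lem:estimateCn3} bounds the integral by $\lesssim\|x\|^{2n+\gamma-4-(\alpha+\beta)}=\|x\|^{\gamma-6}$. Multiplying by $\|x\|^{6-\gamma}$ gives $O(1)$, a fortiori $\lesssim e^k$, so here the factor $e^{-k}$ is pure gain.

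The main obstacle is the critical case $\gamma=6$ (together with the borderline $n=2,\gamma=0$), where the fixed-ball estimate only yields $1+|\log\|x\||$, which is not uniformly $\lesssim e^k$ for very small $\|x\|$; here one must genuinely use that $A_k$ avoids the origin. For this I would decompose $A_k$ according to the size of $\|\zeta\|$ relative to $\|x\|$: on $\{\|\zeta\|\geq2\|x\|\}$ and on $\{\|\zeta\|\leq\|x\|/2\}$ one has $\|\zeta\pm x\|\sim\max(\|\zeta\|,\|x\|)$, while the intermediate shell $\{\|\zeta\|\sim\|x\|\}$ is treated as in the proof of Lemma~\ref{lem:estimateCn3}, splitting off the balls $B_{\|x\|/3}(\pm x)$ and using local integrability of $\|\zeta\mp x\|^{-\alpha}$ (this is where $\alpha,\beta<2n$ enters). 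In each region the angular integration is harmless and the remaining radial integral is $\int t^{\gamma-7}\,dt$ on the outer part and $\int t^{\gamma+2n-5}\,dt$ on the inner part.

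At the critical values $\gamma=6$ (outer) and $n=2,\gamma=0$ (inner) these radial integrals become
\begin{equation*}
    \int_{\epsilon_{k+1}}^{\epsilon_{k-1}}\frac{dt}{t}=\log\frac{\epsilon_{k-1}}{\epsilon_{k+1}}=\frac{e^{k+1}-e^{k-1}}{2}\sim e^k,
\end{equation*}
which is precisely the factor cancelled by the $e^{-k}$ from the first step, whereas the non-critical radial integrals are dominated by one endpoint and contribute only $O(\|x\|^{\gamma-6})$, absorbed by the prefactor $\|x\|^{6-\gamma}$. The one point demanding care is keeping everything uniform across the three regimes $\|x\|\ll\epsilon_{k+1}$, $\|x\|\sim\epsilon_k$ and $\|x\|\gg\epsilon_{k-1}$; but in each case the effective radial integration is truncated to $(\epsilon_{k+1},\epsilon_{k-1})$, so the critical logarithm never exceeds $\sim e^k$, which yields the desired uniform bound $C_3$.
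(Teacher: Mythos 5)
Your proof is correct, but it is organized rather differently from the paper's. The paper splits off only the two balls $B_{\|x\|/3}(\pm x)$ (where, as in your shell region, $\alpha,\beta<2n$ gives local integrability and the logarithm only helps), and on the complement it absorbs the entire prefactor via $\|x\|^{6-\gamma}\leq 3^{6-\gamma}\|\zeta-x\|^{a}\|\zeta+x\|^{b}$ with $a+b=6-\gamma\leq\alpha+\beta$, together with $\|\zeta\|\leq 4\|\zeta\pm x\|$; this reduces \emph{every} $\gamma\in[0,6]$ to the single critical integrand $\|\zeta\|^{-2n}\big|\log\|\zeta\|\big|^{-1}$, whose integral over the annulus is computed exactly by the antiderivative $-\log(-\log t)$ and equals $2$ independently of $k$. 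You instead freeze $\big|\log\|\zeta\|\big|\sim e^{k}$ first and then must decide whether the remaining integral is $O(1)$ or $O(e^{k})$, which forces the case split on $\gamma$ (supercritical cases delegated to Lemma~\ref{lem:estimateCn3} on a fixed ball, the critical $\gamma=6$ and borderline $n=2,\gamma=0$ handled by the further decomposition in $\|\zeta\|$ versus $\|x\|$). Both routes rest on the same cancellation, namely that $\int_{A_k}\|\zeta\|^{-2n}\,dV\sim e^{k}$ is matched by $\big|\log\|\zeta\|\big|\sim e^{k}$ on $A_k$; your version makes explicit exactly where the logarithm is indispensable, at the price of more bookkeeping, while the paper's absorption trick eliminates all case analysis in $\gamma$ and in the relative sizes of $\|x\|$ and $\epsilon_k$.
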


\begin{proof}
Let $\delta:=\|x\|/3$ and set
$$D_1:= B_\delta(x)\ \ ,\ \ D_2:= B_\delta(-x).$$
Then $\|\zeta + x\|=\|(\zeta-x) + 2x\| \geq 5\delta$ and $\|\zeta\| \leq 4 \delta$ on $D_1$ and so we obtain
\begin{eqnarray*}
\int_{D_1} \frac{\|x\|^{6-\gamma} \|\zeta\|^{\gamma-4} dV_{\C^n}(\zeta)}{\|\zeta-x\|^\alpha \|\zeta+x\|^\beta \big|\log\|\zeta\|\big|}
&\lesssim& \frac{\delta^{6-\gamma+\gamma-4 -\beta}}{\log 4 + |\log\delta|} \int_{B_\delta(x)} \frac{dV_{\C^n}(\zeta)}{\|\zeta-x\|^\alpha}\\
&\leq& \frac{\delta^{2-\beta} C_\alpha \delta^{2n-\alpha}}{\log 4 + |\log\delta|} \lesssim 1
\end{eqnarray*}
by use of Lemma \ref{lem:estimateCn1} and $\alpha+\beta=2n+2$ (on the domain of integration, the $\log$-term only helps).
The integral over $D_2$ is treated completely analogous.

Finally, we have to consider 
$D := \big( B_{\epsilon_{k-1}}(0) \setminus \overline{B_{\epsilon_{k+1}}(0)}\big) \setminus \big(D_1\cup D_2\big)$.
Here, we can use $\|\zeta-x\|\geq \delta=\|x\|/3$ and $\|\zeta + x\|\geq \delta=\|x\|/3$ to eliminate $\|x\|$ in the numerator.
Moreover, we have
$$\|\zeta\| \leq \|\zeta-x\| + \|x\| = \|\zeta-x\| + 3\delta \leq 4 \|\zeta-x\|,$$
and analogously $\|\zeta\| \leq 4 \|\zeta+x\|$.
From that we deduce:
\begin{eqnarray*}
 && \int_{D} \frac{\|x\|^{6-\gamma}\|\zeta\|^{\gamma -4}
 dV_{\C^n}(\zeta)}{\|\zeta-x\|^\alpha \|\zeta+x\|^\beta \big|\log\|\zeta\|\big|}
\lesssim \int_{B_{\epsilon_{k-1}}(0) \setminus \overline{B_{\epsilon_{k+1}}(0)}} \frac{dV_{\C^n}(\zeta)}{\|\zeta\|^{2n} \big|\log\|\zeta\|\big|}\\
&\sim& \int_{\epsilon_{k+1}}^{\epsilon_{k-1}} \frac{- dt}{t \log t} = - \log (-\log t) \big|^{\epsilon_{k-1}}_{\epsilon_{k+1}} = -(k-1) + (k+1) = 2.
\end{eqnarray*}
\end{proof}

\begin{bibdiv}
\begin{biblist}

\bib{AS2}{article}{
   author={Andersson, Mats},
   author={Samuelsson, H{\aa}kan},
   title={A Dolbeault-Grothendieck lemma on complex spaces via Koppelman
   formulas},
   journal={Invent. Math.},
   volume={190},
   date={2012},
   number={2},
   pages={261--297},
   %issn={0020-9910},
   %review={\MR{2981816}},
   %doi={10.1007/s00222-012-0380-9},
}

\bib{Dem}{article}{
   author={Demailly, Jean-Pierre},
   title={Complex Analytic and Differential Geometry},
   status={Monograph},
   place={Grenoble},
   eprint={http://www-fourier.ujf-grenoble.fr/~demailly},
}

\bib{FoGa}{article}{
   author={Forn{\ae}ss, John Erik},
   author={Gavosto, Estela A.},
   title={The Cauchy Riemann equation on singular spaces},
   journal={Duke Math. J.},
   volume={93},
   date={1998},
   number={3},
   pages={453--477},
   %issn={0012-7094},
   %review={\MR{1626719 (99c:32022)}},
   %doi={10.1215/S0012-7094-98-09315-2},
}

\bib{FOV}{article}{
   author={Forn{\ae}ss, John Erik},
   author={{\O}vrelid, Nils},
   author={Vassiliadou, Sophia},
   title={Local $L^2$ results for $\overline\partial$: the isolated
   singularities case},
   journal={Internat. J. Math.},
   volume={16},
   date={2005},
   number={4},
   pages={387--418},
  %issn={0129-167X},
  %review={\MR{2133263 (2006b:32048)}},
  %doi={10.1142/S0129167X05002928},
}

\bib{HePo}{article}{
   author={Henkin, Guennadi M.},
   author={Polyakov, Pierre L.},
   title={The Grothendieck-Dolbeault lemma for complete intersections},
   %language={English, with French summary},
   journal={C. R. Acad. Sci. Paris S\'er. I Math.},
   volume={308},
   date={1989},
   number={13},
   pages={405--409},
   %issn={0249-6291},
   %review={\MR{992518 (90f:32007)}},
}

\bib{LR2}{article}{
   author={L\"ark\"ang, Richard},
   author={Ruppenthal, J.},
   title={Koppelman formulas on affine cones over smooth projective complete intersections},
   journal={Indiana Univ. Math. J.},
   status={to appear},
   %date={2015},
   eprint={arXiv:1509.00987 [math.CV]}
}

\bib{LiMi}{book}{
   author={Lieb, Ingo},
   author={Michel, Joachim},
   title={The Cauchy-Riemann complex},
   series={Aspects of Mathematics, E34},
   %note={Integral formulae and Neumann problem},
   publisher={Friedr. Vieweg \& Sohn, Braunschweig},
   date={2002},
   %pages={x+362},
   %isbn={3-528-06954-6},
   %review={\MR{1900133 (2003b:32008)}},
   %doi={10.1007/978-3-322-91608-2},
}

\bib{OV2}{article}{
   author={{\O}vrelid, Nils},
   author={Vassiliadou, Sophia},
   title={$L^2$-$\overline\partial$-cohomology groups of some singular
   complex spaces},
   journal={Invent. Math.},
   volume={192},
   date={2013},
   number={2},
   pages={413--458},
   %issn={0020-9910},
   %review={\MR{3044127}},
   %doi={10.1007/s00222-012-0414-3},
}

\bib{PS}{article}{
   author={Pardon, William},
   author={Stern, Mark},
   title={$L^2$-$\overline\partial$-cohomology of complex projective varieties},
   journal={J. Amer. Math. Soc.},
   volume={4},
   date={1991},
   number={3},
   pages={603--621},
   %issn={0020-9910},
   %review={\MR{3044127}},
   %doi={10.1007/s00222-012-0414-3},
}

\bib{Range}{book}{
   author={Range, R. Michael},
   title={Holomorphic functions and integral representations in several
   complex variables},
   series={Graduate Texts in Mathematics},
   volume={108},
   publisher={Springer-Verlag},
   place={New York},
   date={1986},
   pages={xx+386},
   %isbn={0-387-96259-X},
   %review={\MR{847923 (87i:32001)}},
}

\bib{RuDipl}{thesis}{
    author={Ruppenthal, J.},
    title={Zur Regularität der Cauchy-Riemannschen Differentialgleichungen auf komplexen Kurven},
    place={University of Bonn},
    type={Diplomarbeit},
    year={2003}
}

\bib{RuppThesis}{thesis}{
    author={Ruppenthal, J.},
    title={Zur Regularität der Cauchy-Riemannschen Differentialgleichungen auf komplexen Räumen},
    place={University of Bonn},
    type={PhD thesis},
    year={2006}
}

\bib{RuMatZ2}{article}{
   author={Ruppenthal, J.},
   title={The $\overline\partial$-equation on homogeneous varieties with an
   isolated singularity},
   journal={Math. Z.},
   volume={263},
   date={2009},
   number={2},
   pages={447--472},
   %issn={0025-5874},
   %review={\MR{2534126 (2010f:32041)}},
   %doi={10.1007/s00209-008-0425-3},
}

\bib{RuDuke}{article}{
   author={Ruppenthal, Jean},
   title={$L^2$-theory for the $\dbar$-operator on compact complex spaces},
   journal={Duke Math. J.},
   volume={163},
   date={2014},
   number={15},
   pages={2887--2934},
   %issn={0012-7094},
   %review={\MR{3285860}},
   %doi={10.1215/0012794-2838545},
}

\bib{RuSerre}{article}{
   author={Ruppenthal, J.},
   title={$L^2$-Serre duality on singular complex spaces and rational singularities},
   journal={Int. Math. Res. Not. IMRN},
   status={to appear},
   %date={2014},
   eprint={arXiv:1401.4563 [math.CV]}
}

\bib{RuZeI}{article}{
   author={Ruppenthal, J.},
   author={Zeron, E. S.},
   title={An explicit $\overline\partial$-integration formula for weighted
   homogeneous varieties},
   journal={Michigan Math. J.},
   volume={58},
   date={2009},
   number={2},
   pages={441--457},
   %issn={0026-2285},
   %review={\MR{2595548 (2011c:32060)}},
   %doi={10.1307/mmj/1250169071},
}

\end{biblist}

\end{bibdiv}

\end{document}